\documentclass[11pt,reqno]{amsart}

\usepackage[colorlinks=true, pdfstartview=FitV, linkcolor=blue,
citecolor=blue]{hyperref}

\usepackage{a4wide}
\usepackage{amsmath,amssymb}
\usepackage{bbm}
\usepackage{xcolor}
\usepackage{latexsym}
\usepackage{tikz-cd}

\allowdisplaybreaks

\usepackage{scalerel,stackengine}

\stackMath
\newcommand\reallywidehat[1]{%
\savestack{\tmpbox}{\stretchto{%
  \scaleto{%
    \scalerel*[\widthof{\ensuremath{#1}}]{\kern-.6pt\bigwedge\kern-.6pt}%
    {\rule[-\textheight/2]{1ex}{\textheight}}
  }{\textheight}%
}{0.5ex}}%
\stackon[1pt]{#1}{\tmpbox}%
}

\newcommand\reallywidecheck[1]{%
\savestack{\tmpbox}{\stretchto{%
  \scaleto{%
    \scalerel*[\widthof{\ensuremath{#1}}]{\kern-.6pt\bigwedge\kern-.6pt}%
    {\rule[-\textheight/2]{1ex}{\textheight}}
  }{\textheight}%
}{0.5ex}}%
\stackon[1pt]{#1}{\scalebox{-1}{\tmpbox}}%
}

\numberwithin{equation}{section}

\newcommand{\Z}{{\mathbb Z}}

\newcommand{\R}{{\mathbb R}}
\newcommand{\N}{{\mathbb N}}
\newcommand{\MM}{{\mathbb M}}
\newcommand{\CC}{{\mathbb C}}
\newcommand{\KK}{{\mathbb K}}

\newcommand{\XX}{{\mathbb X}}

\newcommand{\cA}{{\mathcal A}}

\newcommand{\im}{{\mathrm{i}}}
\newcommand{\supp}{{\operatorname{supp}}}

\newcommand{\e}{\operatorname{e}}

\newcommand{\dd}{\mbox{d}}

\newcommand{\eps}{\varepsilon}
\newcommand{\cM}{{\mathcal M}}
\newcommand{\cL}{{\mathcal L}}

\newcommand{\cS}{{\mathcal S}}

\newcommand{\lm}{\ensuremath{\lambda\!\!\!\lambda}}

\newcommand{\Cu}{C_{\mathsf{u}}}
\newcommand{\Cc}{C_{\mathsf{c}}}
\newcommand{\Cz}{C^{}_{0}}

\newcommand{\limn}{\lim_{n\to\infty}}


\theoremstyle{plain}
\newtheorem{theorem}{Theorem}[section]
\newtheorem{proposition}[theorem]{Proposition}
\newtheorem{lemma}[theorem]{Lemma}
\newtheorem{coro}[theorem]{Corollary}

\theoremstyle{definition}
\newtheorem{definition}[theorem]{Definition}
\newtheorem{remark}[theorem]{Remark}
\newtheorem{example}[theorem]{Example}
\newtheorem{question}[theorem]{Question}

\begin{document}
\title{On the (dis)continuity of the Fourier transform of measures}

\author{Timo Spindeler}
\address{Department of Mathematical and Statistical Sciences, \newline
\hspace*{\parindent}632 CAB, University of Alberta, Edmonton, AB, T6G 2G1, Canada}
\email{spindele@ualberta.ca}

\author{Nicolae Strungaru}
\address{Department of Mathematical Sciences, MacEwan University \newline
\hspace*{\parindent} 10700 -- 104 Avenue, Edmonton, AB, T5J 4S2, Canada\\
and \\
Institute of Mathematics ``Simon Stoilow''\newline
\hspace*{\parindent}Bucharest, Romania}
\email{strungarun@macewan.ca}
\urladdr{http://academic.macewan.ca/strungarun/}

\begin{abstract}
In this paper, we will study the continuity of the Fourier transform of measures with respect to the vague topology. We show that the Fourier transform is vaguely discontinuous on $\R$, but becomes continuous when restricting to a class of Fourier transformable measures such that either the measures, or their Fourier transforms are equi-translation bounded. We discuss continuity of the Fourier transform in the product and norm topology. We show that vague convergence of positive definite measures implies the equi translation boundedness of the Fourier transforms, which explains the continuity of the Fourier transform on the cone of positive definite measures. In the appendix, we characterize vague precompactness of a set a measures in arbitrary LCAG, and the necessity of second countability property of a group for defining the autocorrelation measure. 
\end{abstract}

\keywords{}

\subjclass[2010]{43A05, 43A25, 52C23}

\maketitle

\section{Introduction}

The Fourier transform of measures on locally compact Abelian groups (LCAG) and its continuity play a central role for the theory of mathematical diffraction. When restricting to the cones of positive and positive definite measures, the Fourier transform is continuous with respect to the vague topology \cite{BF}. In fact, as shown in \cite{MoSt}, only positive definitedness is important: the Fourier transform is vaguely continuous from the cone of positive definite measures on $G$ to the cone of positive measures on the dual group $\widehat{G}$.

Introduced by Hof \cite{Hof,Hof3}, mathematical diffraction is defined as follows. Given a translation bounded measure $\mu$ which models a solid, the autocorrelation measure $\gamma$ is defined as the vague limit of the 2-point correlations $\gamma_n$ of finite sample approximations $\mu_n$ of $\mu$. The measure $\gamma$ is positive definite, and hence Fourier transformable, and its Fourier transform $\widehat{\gamma}$ models the diffraction of $\mu$. The continuity of the Fourier transform on the cone of positive definite measures ensures that the diffraction measure $\widehat{\gamma}$ is the vague limit of the diffractions $\widehat{\gamma_n}$ of the finite sample approximations $\mu_n$.

If $G=\R^d$, as $\widehat{\gamma}$ is translation bounded \cite{ARMA1}, it is also the distributional Fourier transform of $\gamma$ \cite{NS12}. While the Fourier theory of tempered distributions is more established than the Fourier theory of measures, in many situations it is more convenient to work with measures. Given a regular model set $\Lambda \subseteq \R^d$, it was shown in \cite{CRS,CRS2} that its autocorrelation $\gamma$, diffraction $\widehat{\gamma}$ and the pure point nature of $\Lambda$ can be deduced from the Fourier analysis of the lattice $\cL$ in the underlying cut and project scheme (CPS) $(\R^d, H, \cL)$, which is simplified by the Poisson Summation Formula (PSF). In many situations, the group $H$ is a compactly generated LCAG \cite{NS11}, which may be non-Euclidian. This happens often for equal length substitutions \cite{TAO}. Some of these ideas can be used for the larger class of weak model sets of extremal density \cite{BHS,KR}.

Going beyond $\R^d$, one could work with the Bruhat--Schwartz theory instead of measures (see \cite{Osb} for example), but this would lead to many inconveniences. The mathematical theory for diffraction is well established using the Fourier analysis for measures, but nothing is done yet for the Bruhat--Schwartz theory, and it is not the goal of this paper to do this. The deep, yet subtle connection between pure point Fourier transform/Lebesgue decomposition and strong almost periodicity/Eberlein decomposition has important consequences for the diffraction theory (see \cite{TAO,TAO2,BL,BL2,BM,LS,SpSt,NS11,NS12,NS13} just to name a few), and is very well understood for translation bounded measures \cite{ARMA,MoSt}, and was recently extended to tempered distribution in $\R^d$ \cite{ST}, but again nothing was done yet for the Bruhat--Schwartz space. Because of these reasons, we will work with the Fourier analysis of measures.

\smallskip

As we mentioned above the Fourier transform is continuous on the cone of positive definite measures on $G$. Some of the recent developments in the theory of long range order shows the need of going outside this cone. We list two such directions below.

In \cite{NS13}, the author proved the existence of the generalized Eberlein decomposition and the almost periodicity of each diffraction spectral component for measures with Meyer set support. The key ingredient is the ping-pong lemma for Meyer sets \cite[Lem.~3.3]{NS13}, which says that there exists a class of twice Fourier transformable measures with pure Fourier transform and  supported inside model sets, with the property that a Fourier transformable measure $\gamma$ has Meyer set support if and only if there exists a measure $\omega$ in this class and a finite measure $\nu$ such that
\begin{displaymath}
\widehat{\gamma}=\widehat{\omega}*\nu \,.
\end{displaymath}
This allows one to go back and forth between $G$ and the Fourier dual group $\widehat{G}$, and use the long-range order properties of $\omega$ and $\widehat{\omega}$, which are inherited from the lattice $\cL$ in the underlying CPS \cite{CRS,CRS2}, to deduce the long-range order of Meyer sets. Typically, the measure $\omega$ used in the ping-pong lemma is not positive definite. Indeed, a trivial application of the Krein's inequality shows that whenever $\omega$ is positive definite, the original measure $\gamma$ must be supported inside a fully periodic set. It is worth noting that the measure $\omega$ only depends on a covering model set for $\supp(\gamma)$, and given a family $(\gamma_\alpha)_\alpha$ of measures supported inside a common model set, one can study the family $( \widehat{\gamma_\alpha})_{\alpha}$ of their Fourier transforms by using the same measure $\omega$. In this case, one needs to deal with Fourier transforms of non-positive definite measures, and this is a direction we may investigate in the future.

Recently, there was much progress done in the investigation of 1-dimensional substitution tilings done via the renormalisation equations (see for example \cite{BaGa,BG,BG2,BG3,BGM,BGM} just to name a few).
The renormalisation equations require one to work with the Eberlein convolutions $\gamma_{ij}:=\delta_{\Lambda_i} \circledast \widetilde{\delta_{\Lambda_j}}$ of the typed end points of the substitutions. While these measures are Fourier transformable, they are not positive definite. As these play a central role in some of the new developments, it becomes important to understand the properties Fourier transform outside the cone of positive definite measures.

\smallskip

A standard folklore result is the continuity of the Fourier transform with respect to the vague topology, i.e. if $(\mu_\alpha)_{\alpha}$ is a net of measures that converges vaguely to some measure $\mu$, then the net $(\widehat{\mu_\alpha})_{\alpha}$ converges to $\widehat{\mu}$ in the vague topology. As we mentioned above, this is proven for positive definite measures (see Proposition~\ref{prop:pd_vauge} below). We will prove in this paper that the result is false in general, but holds under some extra restrictions, namely if either the measures are equi-translation bounded or if their Fourier transforms are equi-translation bounded. In order to understand the subtle issues we are dealing with, we will restrict in this paper to the case $G=\R^d$, which allows us to use the theory of distributions. Outside the class of translation bounded measures the problems seem to be much more complicated and we find more questions than answers.

\smallskip

The paper is organized as follows. In Section~\ref{Sect 2}, we characterize the vague compactness of a set of measures and use this to discuss the vague convergence of a sequence of measures in $\R^d$. In Section~\ref{sec:vague}, we discuss the (dis)continuity of the Fourier transform in the vague topology. We provide two examples (Example \ref{EX2} and Example \ref{ex3}) of vague null sequences for which the Fourier transforms are not vague null, and discuss the connection among the vague convergence, convergence in the distribution topology and convergence in the tempered distribution topology. In particular, we show that the three topologies coincide on any set which is equi-translation bounded, and we provide an example (Example \ref{ex:tempdis}) of a sequence of measures which is null in the (tempered) distribution topology but not vaguely null. Then, we proceed to prove two of the main results in the paper, Theorem~\ref{T3} and Theorem~\ref{thm:mainvag2}. In Theorem~\ref{T3}, we show that if a sequence $\mu_n$ of Fourier transformable measures converges vaguely to a tempered measure $\mu$, and if $\{ \widehat{\mu_n}\, :\,  n\in\N\}$ is equi translation bounded, then $\mu$ is Fourier transformable and $(\widehat{\mu_n})_{n\in\N}$ converges vaguely to $\mu$. In Theorem~\ref{thm:mainvag2}, we prove a complementary result: given a sequence $(\mu_n)_{n\in\N}$ of equi translation bounded measures, which converges vaguely to some measure $\mu$, then $\mu$ is Fourier transformable and $(\widehat{\mu_n})_{n\in\N}$ converges vaguely to $\widehat{\mu}$ if and only if $\{ \widehat{\mu_n}\, :\, n\in\N \}$ has compact vague closure. We complete the section by looking in Example \ref{ex417} at a sequence of finite measures which is not convergent, but for which the Fourier transforms are vaguely null.

In Sections \ref{sec:product} and \ref{sec:norm}, we look at necessary conditions for the convergence of the Fourier transform in the product topology and norm topology, respectively. As an application, in Section~\ref{sec:application}, we give a necessary and sufficient condition for a measure to be Fourier transformable.

In Section \ref{sec:pd}, we explain the continuity of the Fourier transform on the cone of positive definite measures. We show that on this cone, the Fourier transform is vague to norm bounded. As this is a cone, boundedness does not imply continuity, but implies that, whenever a sequence (net) of positive definite measures is vaguely convergent, their Fourier transforms are equi-translation bounded, and the continuity of the Fourier transform follows immediately from Theorem~\ref{T3}.

Finally, as some of the results in Section~\ref{Sect 2} are of interest in general, we prove them for arbitrary LCAG in Appendix~\ref{sect:app}. We also show that the hull $\XX(\mu)$  of a measure is compact if and only if $\mu$ is translation bounded, generalizing a result from \cite{BL}.

\section{Preliminaries}

First, let us introduce some terms and concepts which we will need in the following sections.
We use the familiar symbols $C_{\text{c}}(\R^d)$ and $C_{\text{u}}(\R^d)$ for the spaces of compactly supported continuous and bounded uniformly continuous functions, respectively, which map from $\R^d$ to $\CC$. For any function $g$ on $\R^d$, the functions $T_tg$ and $g^{\dagger}$ are defined by
\begin{displaymath}
(T_tg)(x):=g(x-t)\quad \text{ and } \quad g^{\dagger}(x):=g(-x).
\end{displaymath}

A \emph{measure} $\mu$ on $\R^d$ is a linear functional on $C_{\text{c}}(\R^d)$ such that, for every compact subset $K\subseteq \R^d$, there is a constant $a_K>0$ with
\begin{displaymath}
|\mu(g)| \leqslant a_{K}\, \|g\|_{\infty}
\end{displaymath}
for all $g\in C_{\text{c}}(\R^d)$ with $\supp(g) \subseteq K$. Here, $\|g\|_{\infty}$ denotes the supremum norm of $g$. By the Riesz Representation theorem, this definition is equivalent to the classical measure theory concept of regular Radon measure.

For a measure $\mu$ on $\R^d$, we define  $T_t\mu$ and $\mu^{\dagger}$ by
\begin{displaymath}
(T_t\mu)(g):= \mu(T_{-t}g)\quad  \text{ and } \quad
\mu^{\dagger}(g):= \mu(g^{\dagger}).
\end{displaymath}

Given a measure $\mu$, there exists a positive measure $| \mu|$ such that, for all $f \in \Cc(\R^d)$ with $f \geqslant 0$, we have \cite{Ped} (compare \cite[Appendix]{CRS3})
\[
| \mu| (f)= \sup \{ \left| \mu (g) \right| \ :\ g \in \Cc(\R^d),\,  |g| \leqslant f \} \,.
\]
The measure $| \mu|$ is called the \emph{total variation of} $\mu$.

\begin{definition}
A measure $\mu$ on $\R^d$ is called \emph{Fourier transformable} if there exists a measure $\widehat{\mu}$ on $\R^d$ such that
\[
\reallywidecheck{f}\in L^2(\widehat{\mu}) \qquad \text{ and } \qquad
\left\langle \mu\, , \, f*\widetilde{f} \right\rangle =
\left\langle \widehat{\mu}\, , \, |\reallywidecheck{f}|^2 \right\rangle
\]
for all $f\in\Cc(\R^d)$. In this case, $\widehat{\mu}$ is called the \emph{Fourier transform} of $\mu$.
We will denote the space of Fourier transformable measures by $\cM_T(\R^d)$.
\end{definition}

\begin{remark}
It was shown in \cite{CRS} that a measure $\mu$ on $\R^d$ is Fourier transformable if and only if there is a measure $\widehat{\mu}$ on $\R^d$ such that
\[
\reallywidecheck{f}\in L^1(\widehat{\mu}) \qquad \text{ and } \qquad
\left\langle \mu\, , \, f \right\rangle =
\left\langle \widehat{\mu}\, , \reallywidecheck{f} \right\rangle
\]
for all $f\in KL(\R^d):=\{g\in\Cc(\R^d)\ :\ \widehat{g}\in L^1(\R^d)\}$.
\end{remark}

We will often use the following result.

\begin{theorem}\cite[Thm.~7.2]{ARMA1} Let $\mu \in \cM_T(\R^d)$. Then, $\mu$ is a tempered measure.
\end{theorem}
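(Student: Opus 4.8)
The plan is to prove the stronger fact that $\widehat{\mu}$ is translation bounded, and then to transfer this back to $\mu$ by Fourier inversion; temperedness of $\mu$ then follows, since translation bounded measures are tempered. For the first step I would use the closed graph theorem. Fix a compact set $K\subseteq\R^d$ and consider the linear map
\[
\Phi_K\colon \{f\in\Cc(\R^d):\supp f\subseteq K\}\to L^2(|\widehat{\mu}|),\qquad \Phi_K(f):=\reallywidecheck{f}.
\]
It is well defined because $\mu\in\cM_T(\R^d)$ forces $\reallywidecheck{f}\in L^2(\widehat{\mu})$ for all $f\in\Cc(\R^d)$, and it has closed graph: if $f_n\to f$ uniformly with supports in $K$, then $\reallywidecheck{f_n}(\xi)\to\reallywidecheck{f}(\xi)$ for every $\xi$ by dominated convergence, so the pointwise limit $\reallywidecheck{f}$ must coincide $|\widehat{\mu}|$-almost everywhere with any $L^2(|\widehat{\mu}|)$-limit of the $\reallywidecheck{f_n}$. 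Since the source (with the supremum norm) and the target are Banach spaces, the closed graph theorem yields a constant $A_K>0$ with $\|\reallywidecheck{f}\|_{L^2(|\widehat{\mu}|)}\leqslant A_K\,\|f\|_{\infty}$ whenever $\supp f\subseteq K$.

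Next I would feed in modulated bumps. Fix $u\in\Cc(\R^d)$ with $\supp u\subseteq\overline{B_1(0)}$, $u\geqslant 0$ and $u\not\equiv 0$. Then $\reallywidecheck{u}$ is continuous with $\reallywidecheck{u}(0)=\int_{\R^d}u>0$, so there are $c,\rho>0$ with $|\reallywidecheck{u}|^2\geqslant c$ on $B_\rho(0)$. For $y\in\R^d$ put $f_y(x):=\e^{2\pi\im\,y\cdot x}u(x)$; then $\supp f_y\subseteq\overline{B_1(0)}$, $\|f_y\|_\infty=\|u\|_\infty$, and $|\reallywidecheck{f_y}|^2$ is a translate of $|\reallywidecheck{u}|^2$, hence $\geqslant c$ on a ball $B_\rho(y')$ whose centre $y'$ ranges over all of $\R^d$ as $y$ does. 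Therefore, for every $y'\in\R^d$,
\[
c\,|\widehat{\mu}|\bigl(B_\rho(y')\bigr)\ \leqslant\ \int_{B_\rho(y')}|\reallywidecheck{f_y}|^2\,\dd|\widehat{\mu}|\ \leqslant\ \|\reallywidecheck{f_y}\|_{L^2(|\widehat{\mu}|)}^2\ \leqslant\ A_{\overline{B_1(0)}}^2\,\|u\|_\infty^2,
\]
so $\widehat{\mu}$ is translation bounded; in particular $\int_{\R^d}(1+|\xi|)^{-d-1}\,\dd|\widehat{\mu}|(\xi)<\infty$, i.e.\ $\widehat{\mu}$ is a tempered measure.

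To finish, I would invoke the Fourier inversion theorem for transformable measures, which gives $\widehat{\mu}\in\cM_T(\R^d)$ with $\reallywidehat{\widehat{\mu}}=\mu^{\dagger}$; running the previous two paragraphs with $\widehat{\mu}$ in place of $\mu$ then shows that $\mu^{\dagger}$, and hence $\mu$, is translation bounded, a fortiori tempered. (If one only needs that $\mu$ extends to a continuous functional on $\mathcal{S}(\R^d)$, one can avoid inversion: by the description of transformability on $KL(\R^d)\supseteq\Cc^{\infty}(\R^d)$ recalled in the Remark, $\langle\mu,\varphi\rangle=\langle\widehat{\mu},\reallywidecheck{\varphi}\rangle$ for $\varphi\in\Cc^{\infty}(\R^d)$, so $|\langle\mu,\varphi\rangle|\leqslant\int_{\R^d}|\reallywidecheck{\varphi}|\,\dd|\widehat{\mu}|\leqslant C\,\sup_{\xi}(1+|\xi|)^{d+1}|\reallywidecheck{\varphi}(\xi)|$, which is controlled by a Schwartz seminorm of $\varphi$, and $\Cc^{\infty}(\R^d)$ is dense in $\mathcal{S}(\R^d)$.) The step I expect to be the main obstacle is the middle one: converting the abstract closed-graph estimate into a genuine uniform bound on the local mass of $|\widehat{\mu}|$ needs test functions $f$ whose inverse transform $\reallywidecheck{f}$ is bounded below, uniformly, on a ball of fixed radius that can be placed anywhere, and this is exactly what modulating a fixed non-negative bump provides while leaving $\|f\|_\infty$ unchanged. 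A secondary subtlety is that passing from temperedness of $\widehat{\mu}$ back to $\mu$ really does use Fourier inversion for transformable measures, since a complex measure that merely happens to be a tempered distribution need not have polynomially bounded total variation.
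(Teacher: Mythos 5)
This statement is quoted from \cite{ARMA1} and the paper gives no proof of it, so there is nothing internal to compare against; I can only assess your argument on its own terms. Your first half is correct and is essentially the standard proof that the Fourier transform of a transformable measure is translation bounded (which is \cite[Thm.~7.1]{ARMA1}, also used elsewhere in this paper): the closed graph theorem on $\{f\in\Cc(\R^d):\supp f\subseteq K\}\to L^2(|\widehat{\mu}|)$ is legitimate, and the modulated‑bump trick correctly converts the abstract bound into $\sup_{y}|\widehat{\mu}|(B_\rho(y))<\infty$.

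The second half, however, contains a genuine gap. The ``Fourier inversion theorem'' you invoke does not say that $\widehat{\mu}\in\cM_T(\R^d)$ for every $\mu\in\cM_T(\R^d)$; in \cite{ARMA1} transformability of $\widehat{\mu}$ is an additional \emph{hypothesis} of the inversion theorem, and in fact a transformable measure is twice transformable precisely when it is translation bounded. Since transformable measures need \emph{not} be translation bounded --- this is why the paper introduces $\cM^\infty_T(\R^d)=\cM^\infty(\R^d)\cap\cM_T(\R^d)$ and explicitly entertains $\|\nu_n\|_K=\infty$ for transformable $\nu_n$ in its open questions --- your conclusion that $\mu$ itself is translation bounded is false in general, which confirms that the inversion step cannot be repaired. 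Your parenthetical fallback is sound as far as it goes, but it only shows that the functional $\varphi\mapsto\mu(\varphi)$, restricted to $\Cc^\infty(\R^d)$, is continuous for the Schwartz topology and hence extends to an element of $\cS'(\R^d)$. That is strictly weaker than the assertion of \cite[Thm.~7.2]{ARMA1}, namely that $|\mu|$ has polynomial growth, equivalently $\cS(\R^d)\subseteq L^1(|\mu|)$: a signed measure such as $\sum_n a_n(\delta_{x_n}-\delta_{x_n+t_n})$ with $t_n\to0$ fast can define a tempered distribution while $|\mu|$ grows faster than any polynomial. The strong form is what the paper actually uses later (e.g.\ when it evaluates $\mu_n(\widehat{f})$ for $f\in\Cc^\infty(\R^d)$, where $\widehat{f}\in\cS(\R^d)\setminus\Cc(\R^d)$ must be integrable against $|\mu_n|$), and your estimate $|\mu(\varphi)|\leqslant C\,q(\varphi)$, with $q$ a seminorm involving derivatives of $\varphi$, cannot be converted into a bound on $|\mu|(K)$ by taking suprema over $|\varphi|\leqslant 1_K$. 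Controlling the total variation, not just the functional, is the missing step.
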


Whenever we deal with Fourier transformable measures, this result will allow us consider them as tempered distributions.

\smallskip

The next property will turn out to be quite useful, when we want to give sufficient conditions for the continuity of the Fourier transform.

\begin{definition}
A measure $\mu$ on $\R^d$ is called \emph{translation bounded} if
\[
\|\mu\|_K:=\sup_{t\in \R^d} |\mu|(t+K)<\infty \,,
\]
for all compact sets $K\subseteq \R^d$.

As usual, we will denote by $\cM^\infty(G)$ the space of translation bounded measures. We will use the notation
\begin{displaymath}
\cM^\infty_{T}(\R^d):=\cM^\infty(\R^d) \cap \cM_{T}(\R^d) \,.
\end{displaymath}

A family of measures $(\mu_{\alpha})_{\alpha}$ is called \emph{equi translation bounded} if
\[
\sup_{\alpha} \|\mu_{\alpha}\|_K < \infty \,,
\]
for all compact sets $K\subseteq \R^d$.
\end{definition}

In fact, it suffices to show that $\|\mu\|_K:=\sup_{t\in \R^d} |\mu|(t+K)<\infty$ holds for a single compact set $K$. Furthermore, one can show that $\mu$ is translation bounded if and only if $\mu*f\in \Cu(\R^d)$, for all $f\in\Cc(\R^d)$.

\bigskip
Next, let us introduce the different kind of convergences we are going to work with.

\begin{definition}
Let $(\mu_n)_{n\in\N}$ be a sequence of measures on $\R^d$, and let $\mu\in\cM(\R^d)$. Then, the sequence $(\mu_n)_{n\in\N}$ converges to $\mu$
\begin{enumerate}
\item[$\bullet$] in the \emph{vague topology} if $\lim_{n\to\infty} \mu_n(f)=\mu(f)$ for all $f\in \Cc(\R^d)$;
\item[$\bullet$] in the \emph{norm topology} if $\lim_{n\to\infty} \|\mu_n-\mu\|_K=0$ for some (fixed) non-empty and compact set $K\subseteq \R^d$ which is the closure of its interior;
\item[$\bullet$] in the \emph{product topology} if $\lim_{n\to\infty} \|(\mu_n-\mu)*g\|_{\infty}=0$ for all $g\in \Cc(\R^d)$.
\end{enumerate}
\end{definition}

\begin{definition}
Let $(\mu_n)_{n\in\N}$ be a sequence of measures on $\R^d$, and let $\mu\in\cM(\R^d)$. Then, the sequence $(\mu_n)_{n\in\N}$ converges to $\mu$
\begin{enumerate}
\item[$\bullet$] in the \emph{tempered distribution topology} if $\mu_n, \mu$ are tempered measures, for all $n\in\N$, and $\lim_{n\to\infty} \mu_n(f)=\mu(f)$ for all $f\in \cS(\R^d)$;
\item[$\bullet$] in the \emph{distribution topology} if  $\lim_{n\to\infty} \mu_n(f)=\mu(f)$ for all $f\in \Cc^\infty(\R^d)$.
\end{enumerate}
\end{definition}

\begin{remark}
Vague convergence implies convergence in the distribution topology. Also,
convergence in the tempered distribution topology implies convergence in the distribution topology.
\end{remark}

\section{A few notes on the vague topology} \label{Sect 2}

In this section we review some results about the vague theory for measures, and generalize few results of \cite{BL}. Since the proofs do not rely on the geometry of $\R^d$ and will be of interest in general, we will prove them in the general setting of LCAG $G$ in the Appendix and refer to it for the proofs.

Let us start with the following definition.

\begin{definition} A set $\mathcal{A} \subseteq \cM(\R^d)$ is called \emph{vaguely bounded} if, for each $f \in \Cc(\R^d)$, the set $\{ \mu(f): \mu \in \mathcal{A} \}$ is bounded.
\end{definition}

\begin{proposition}\label{P2}
Let $\mathcal{A} \subseteq \cM(\R^d)$. Then, the following statements are equivalent:
\begin{itemize}
\item [(i)] $\mathcal{A}$ is vaguely precompact.
\item [(ii)] $\mathcal{A}$ is vaguely bounded.
\item [(iii)] For each compact set $K \subseteq \R^d$, the set $\{ \left| \mu \right| (K) : \mu  \in \mathcal{A} \}$ is bounded.
\item[(iv)] There exists a collection $\{ K_\alpha\, :\,  \alpha\}$ of compact sets in $\R^d$ such that
\begin{itemize}
\item[$\bullet$] $\R^d =\bigcup_{\alpha} (K_\alpha)^\circ$ and
\item[$\bullet$] $\{ |\mu| (K_\alpha) : \mu  \in \mathcal{A} \}$ is bounded, for each $\alpha$.
\end{itemize}
\end{itemize}
Moreover, in this case the vague topology on $\cA$ is metrisable.
\end{proposition}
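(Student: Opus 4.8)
The plan is to establish the cycle of implications (i) $\Rightarrow$ (ii) $\Rightarrow$ (iii) $\Rightarrow$ (iv) $\Rightarrow$ (ii), together with (ii) $\Rightarrow$ (i), and then deduce metrisability. The implication (i) $\Rightarrow$ (ii) is immediate: on a vaguely precompact set, every evaluation functional $\mu \mapsto \mu(f)$ is continuous on a (pre)compact set, hence bounded. For (ii) $\Rightarrow$ (iii), given a compact $K$, pick $f \in \Cc(\R^d)$ with $0 \leqslant f$ and $f \equiv 1$ on $K$; then $|\mu|(K) \leqslant |\mu|(f) = \sup\{|\mu(g)| : |g| \leqslant f\}$, and one controls the right-hand side uniformly in $\mu$ by a Banach--Steinhaus / uniform boundedness argument applied to the family of functionals restricted to the Banach space $\{g \in \Cc(\R^d) : \supp(g) \subseteq \supp(f)\}$ with the sup norm — vague boundedness says the family is pointwise bounded there, so it is uniformly bounded, giving $\sup_\mu |\mu|(K) < \infty$. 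The implication (iii) $\Rightarrow$ (iv) is trivial (take any locally finite cover of $\R^d$ by compact sets with nonempty interior, e.g.\ closed balls of integer radius), and (iv) $\Rightarrow$ (ii) follows because any $f \in \Cc(\R^d)$ has support covered by finitely many $(K_\alpha)^\circ$, hence $\supp(f) \subseteq K_{\alpha_1} \cup \dots \cup K_{\alpha_m}$, and then $|\mu(f)| \leqslant \|f\|_\infty \sum_{j} |\mu|(K_{\alpha_j})$ is bounded uniformly in $\mu$.

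The substantive implication is (ii) $\Rightarrow$ (i), i.e.\ vague precompactness from vague boundedness; this is where I expect the real work to lie. The cleanest route is to realize $\cM(\R^d)$ with the vague topology as a subspace of a product: for each compact $K$ fix $\chi_K \in \Cc(\R^d)$ with $\mathbbm{1}_K \leqslant \chi_K$, and embed $\cM(\R^d)$ into $\prod_{f \in \Cc(\R^d)} \CC$ via $\mu \mapsto (\mu(f))_f$, which is a homeomorphism onto its image. Under hypothesis (ii), the image of $\cA$ lies in the product $\prod_f \overline{B(0, c_f)}$ of closed disks, which is compact by Tychonoff. It then remains to check that the vague closure of $\cA$ inside this product still consists of measures: a cluster point is a linear functional $\mu$ on $\Cc(\R^d)$ satisfying, for each compact $K$ and each $g$ supported in $K$, the bound $|\mu(g)| \leqslant \|g\|_\infty \sup_{\nu \in \cA} |\nu|(K)$ (which passes to limits since it is a closed condition), so $\mu \in \cM(\R^d)$ by the very definition of a measure. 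Hence $\overline{\cA}^{\,\mathrm{vague}}$ is a closed subset of a compact set, i.e.\ compact, proving (i). Linearity of the limit functional and the defining inequality are both preserved under pointwise limits, so no delicate interchange is needed — the only mild subtlety is keeping track of which $\chi_K$ one uses, which is purely bookkeeping.

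Finally, for metrisability on $\cA$: $\R^d$ is $\sigma$-compact, so $\Cc(\R^d)$ is separable in the inductive-limit sense — more precisely, there is a countable set $D = \{f_k\} \subseteq \Cc(\R^d)$ which is dense in each Banach space $\Cc_K(\R^d) = \{g : \supp g \subseteq K\}$ (take $K$ from a countable exhausting sequence and use separability of each $\Cc_K$). On the vaguely bounded set $\cA$, the topology is determined by the seminorms $\mu \mapsto |\mu(f)|$, and by density together with the uniform bound from (iii) the countably many seminorms $\mu \mapsto |\mu(f_k)|$ already separate points and induce the same topology; hence $d(\mu,\nu) = \sum_k 2^{-k} \min(1, |\mu(f_k) - \nu(f_k)|)$ metrises the vague topology on $\cA$. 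The one point to verify carefully is that convergence against all of $D$ forces convergence against an arbitrary $f \in \Cc(\R^d)$: writing $\supp f \subseteq K$ and approximating $f$ in $\|\cdot\|_\infty$ by $f_k \in D \cap \Cc_K$, one uses $|\mu(f) - \mu(f_k)| \leqslant \|f - f_k\|_\infty \sup_{\nu \in \cA}|\nu|(K)$, which is the place where hypothesis (iii) — now available since (ii) $\Leftrightarrow$ (iii) is already proved — is essential. Since these arguments use only $\sigma$-compactness and local compactness, they will be carried out for general LCAG $G$ in the Appendix, and we refer there for the details.
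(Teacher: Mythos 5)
Your proof is correct and follows essentially the same route as the paper's (Proposition~\ref{P2A} in the appendix): the uniform boundedness principle on the Banach spaces $C(G:K)$ for (ii)$\implies$(iii), a product-space/Tychonoff (Banach--Alaoglu) embedding for precompactness, and separability of the $C(G:K_n)$ for metrisability. The only cosmetic differences are that you embed into $\prod_{f}\CC$ rather than $\prod_\alpha \cM(K_\alpha)$ and write the metric explicitly instead of citing the metrisability of the weak-$*$ topology on norm-bounded sets in the dual of a separable Banach space.
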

\begin{proof}
See Proposition~\ref{P2A}.
\end{proof}

Let us note the following consequence.

\begin{proposition}\label{P3}
et $(U_n)_{n\in\N}$ be a sequence of open precompact sets such that $\R^d= \bigcup_n U_n$. Let $C_n >0$ be constants, for all $n\in\N$. Then, the space
\begin{displaymath}
\MM:= \{ \mu \in \cM(\R^d)\, :\, |\mu| (U_n) \leqslant C_n \text{ for all } n\in\N \}
\end{displaymath}
is vaguely compact and metrisable.
\end{proposition}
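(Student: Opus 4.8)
The plan is to derive Proposition~\ref{P3} from Proposition~\ref{P2}. First I would show that $\MM$ is vaguely precompact by verifying condition~(iv) of Proposition~\ref{P2}: the sets $K_\alpha$ will be the closures $\overline{U_n}$ of the given precompact open sets. Indeed, since each $U_n$ is precompact, $\overline{U_n}$ is compact; the union $\bigcup_n (\overline{U_n})^\circ \supseteq \bigcup_n U_n = \R^d$ covers $\R^d$ (using $U_n \subseteq (\overline{U_n})^\circ$, which holds because $U_n$ is open); and for every $\mu \in \MM$ we have $|\mu|(\overline{U_n}) \geqslant |\mu|(U_n)$, but this is the wrong direction, so instead I would pick for each $n$ a slightly larger precompact open set, or rather argue directly that the collection $\{U_n : n \in \N\}$ itself already satisfies a variant of~(iv): $\R^d = \bigcup_n U_n$ with each $U_n$ open and $\{|\mu|(U_n) : \mu \in \MM\} \leqslant C_n$ bounded. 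Since condition~(iv) asks for compact sets with interiors covering $\R^d$, I would replace each $U_n$ by $\overline{U_n}$ and note that boundedness of $|\mu|(U_n)$ does not immediately give boundedness of $|\mu|(\overline{U_n})$; the clean fix is to apply Proposition~\ref{P2}~(iv) with the open cover refined so that each compact set used is contained in some $U_m$. Concretely, for each point $x$ choose $n$ with $x \in U_n$ and a compact neighbourhood $K_x \subseteq U_n$ of $x$; then $\{K_x\}_x$ covers $\R^d$ by interiors and $|\mu|(K_x) \leqslant |\mu|(U_n) \leqslant C_n$ is bounded on $\MM$. Hence $\MM$ is vaguely precompact and, by the last assertion of Proposition~\ref{P2}, the vague topology on $\MM$ is metrisable.

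It remains to show $\MM$ is vaguely \emph{closed}, hence compact. So let $(\mu_\alpha)_\alpha$ be a net in $\MM$ converging vaguely to some $\mu \in \cM(\R^d)$; I must check $\mu \in \MM$, i.e.\ $|\mu|(U_n) \leqslant C_n$ for every $n$. Fix $n$. Using the variational characterisation of the total variation recalled in the Preliminaries, it suffices to show that $|\mu(g)| \leqslant C_n$ for every $g \in \Cc(\R^d)$ with $\supp(g) \subseteq U_n$ and $\|g\|_\infty \leqslant 1$: taking the supremum over such $g$ (and using inner regularity of $|\mu|$ on the open set $U_n$) yields $|\mu|(U_n) \leqslant C_n$. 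Now for such a $g$ and each $\alpha$ we have $|\mu_\alpha(g)| \leqslant |\mu_\alpha|(|g|) \leqslant |\mu_\alpha|(U_n) \leqslant C_n$, and $\mu_\alpha(g) \to \mu(g)$ by vague convergence, so $|\mu(g)| \leqslant C_n$. This proves $\mu \in \MM$, so $\MM$ is closed; being a closed subset of a vaguely compact set, $\MM$ is vaguely compact, and it is metrisable by the above.

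The main obstacle is the bookkeeping in the first paragraph: condition~(iv) of Proposition~\ref{P2} is phrased with compact sets, while the hypothesis of Proposition~\ref{P3} gives open precompact sets, and the bound $|\mu|(U_n) \leqslant C_n$ on an open set need not pass to its closure. The resolution — refining to a cover by compact neighbourhoods each sitting inside some $U_n$ — is routine but must be stated carefully; alternatively one can invoke a Lindel\"of-type argument to keep the refined cover countable, which is convenient but not strictly necessary since Proposition~\ref{P2}~(iv) allows an arbitrary index set. The closedness argument is then a standard approximation of total variation by test functions and presents no real difficulty.
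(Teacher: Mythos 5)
Your proof is correct and follows essentially the same route as the paper's own argument (Proposition~\ref{P3A}): vague precompactness via condition (iv) of Proposition~\ref{P2}, followed by a net-closedness argument based on the variational description of the total variation. Your extra care in refining the cover by compact neighbourhoods sitting inside the $U_n$ addresses a point the paper glosses over, since invoking Proposition~\ref{P2A} with $K_\alpha=\overline{U_\alpha}$ strictly requires a bound on $|\mu|(\overline{U_\alpha})$ rather than on $|\mu|(U_\alpha)$; apart from that, the two arguments agree.
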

\begin{proof}
See Proposition~\ref{P3A}.
\end{proof}

\begin{remark} It follows from Proposition~\ref{P2} that a set $\mathcal{A}$ is vaguely precompact if and only if it is a subset of some $\MM$ as in Proposition~\ref{P3}.
\end{remark}

\smallskip
We will often make use of the following result.

\begin{proposition}
Let $\mu_n, \mu$ be measures on $\R^d$, for all $n\in\N$. Then, $(\mu_n)_{n\in\N}$ converges vaguely to $\mu$ if and only if
\begin{itemize}
\item [(i)] $\{ \mu_n\, :\, n\in\N \}$ is vaguely bounded,
\item [(ii)] the set
\begin{displaymath}
D:=\{ f \in \Cc(\R^d)\, :\, \mu(f) = \lim_{n\to\infty} \mu_n (f) \}
\end{displaymath}
is dense in $\Cc(\R^d)$ with respect to the inductive topology.
\end{itemize}
\end{proposition}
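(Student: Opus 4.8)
The plan is to prove both implications, with the forward direction being essentially trivial and the reverse direction carrying the content.

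First I would treat the ``only if'' direction. Suppose $(\mu_n)_{n\in\N}$ converges vaguely to $\mu$. Then for every $f \in \Cc(\R^d)$ the sequence $(\mu_n(f))_{n\in\N}$ converges, hence is bounded, so (i) holds; and $D = \Cc(\R^d)$ itself, so (ii) holds trivially. This requires no work beyond unwinding definitions.

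For the ``if'' direction, assume (i) and (ii). The key observation is that (i) upgrades, via Proposition~\ref{P2}, to the statement that for every compact $K \subseteq \R^d$ the set $\{|\mu_n|(K) : n \in \N\}$ is bounded; that is, $(\mu_n)_{n\in\N}$ is equi-locally-bounded with some constants $a_K := \sup_n |\mu_n|(K') < \infty$, where $K'$ is a fixed compact neighbourhood of $K$. I would then argue pointwise: fix an arbitrary $g \in \Cc(\R^d)$ with $\supp(g) \subseteq K$, and approximate $g$ in the inductive-limit topology by functions from $D$. Concretely, since $D$ is dense, for each $\eps > 0$ pick $h \in D$ with $\supp(h)$ contained in a fixed compact neighbourhood $K'$ of $K$ and $\|g - h\|_\infty < \eps$ (density in the inductive topology lets us control both the support and the sup norm simultaneously). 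Then estimate
\begin{displaymath}
|\mu_n(g) - \mu(g)| \leqslant |\mu_n(g-h)| + |\mu_n(h) - \mu(h)| + |\mu(h - g)|,
\end{displaymath}
bound the first term by $a_{K'}\,\eps$ and the third by $|\mu|(K')\,\eps$ (note $\mu$ is a fixed measure, so $|\mu|(K') < \infty$), and send $n \to \infty$ so the middle term vanishes. Since $\eps$ was arbitrary, $\mu_n(g) \to \mu(g)$; as $g$ was arbitrary in $\Cc(\R^d)$, this gives vague convergence.

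The main obstacle — really the only subtle point — is justifying that density of $D$ in the \emph{inductive} topology on $\Cc(\R^d)$ yields, for a given $g$, approximants $h \in D$ whose supports stay inside one fixed compact set while $\|g - h\|_\infty \to 0$. This is exactly what convergence in the inductive-limit topology provides: a net (or sequence) converging to $g$ must eventually lie in the Fréchet space $\Cc(K')$ for some fixed compact $K'$ and converge there uniformly. One should also note that it suffices to establish $\mu_n(g) \to \mu(g)$ for $g$ in a dense subset closed under the relevant support constraints — but since we are proving it for \emph{all} $g$ directly via the $\eps$-estimate, no separate density reduction is needed. I would include a short remark that this is why the inductive topology (rather than mere sup-norm density on each $\Cc(K)$) is the natural hypothesis in (ii).
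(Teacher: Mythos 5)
Your proposal is correct and follows essentially the same route as the paper: the forward direction is immediate, and the reverse direction uses Proposition~\ref{P2} to upgrade vague boundedness to uniform bounds $|\mu_n|(K)\leqslant c$, then approximates $f$ by elements of $D$ supported in a common compact set (the point of the inductive topology) and concludes with the same three-term triangle inequality. No substantive differences.
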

\begin{proof}
$\Longrightarrow$ Is obvious.

\medskip

\noindent $\Longleftarrow$ Let $f \in \Cc(\R^d)$ be arbitrary, and let $\eps >0$.
By the density of $D$ and the definition of the inductive topology, there exist $g_m \in \Cc(\R^d)$ and a compact set $K$ such that $\supp(g_m), \supp(f) \subseteq K$ and $\lim_{m\to\infty} \| f-g_m \|_\infty =0$.

Now, by (i) and Proposition~\ref{P2}, there exists a $c>0$ such that $|\mu_n|(K),|\mu|(K) \leqslant c$ for all $n \in \N$.
Then, there exists some $m$ such that
\begin{displaymath}
\| f-g_m \|_\infty < \frac{\eps}{3 c +1} \,.
\end{displaymath}
Fix this $m$, and denote for simplicity $g:= g_m$. By the definition of $D$, there exists some $N\in\N$ such that, for all $n >N$, we have
\begin{displaymath}
\left| \mu_n(g)- \mu(g) \right| <\frac{\eps}{3} \,.
\end{displaymath}
Finally, for all $n >N$, we have
\begin{align*}
|\mu_n(f)-\mu(f)|
    &\leqslant |\mu_n(f)-\mu_n(g)| + |\mu_n(g)-\mu(g)| + |\mu(g)-\mu(f)| \\
    &< \|f-g\|_{\infty}\, |\mu_n|(K) + \frac{\eps}{3} + \|f-g\|_{\infty}\,
      |\mu|(K) \\
    &< \frac{\eps}{3} +\frac{\eps}{3} +\frac{\eps}{3}  = \eps \,.
\end{align*}
\end{proof}

\begin{coro} \label{coro:chara_vc}
Let $\mu_n, \mu$ be measures on $\R^d$, for all $n\in\N$. Then, the sequence $(\mu_n)_{n\in\N}$ converges vaguely to $\mu$ if and only if $\{ \mu_n \, :\, n\in\N \} $ is vaguely bounded and $(\mu_n)_{n\in\N}$ converges to $\mu$ in the distribution topology.
\end{coro}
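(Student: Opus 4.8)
The plan is to deduce this directly from the preceding Proposition, whose criterion for vague convergence is precisely that (i) $\{\mu_n : n\in\N\}$ is vaguely bounded and (ii) the set $D:=\{f\in\Cc(\R^d) : \mu(f)=\lim_{n\to\infty}\mu_n(f)\}$ is dense in $\Cc(\R^d)$ with respect to the inductive topology. So the entire argument reduces to identifying when condition (ii) holds in terms of convergence in the distribution topology.

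For the forward implication there is nothing to do beyond unwinding definitions: if $(\mu_n)_{n\in\N}$ converges vaguely to $\mu$, then for each $f\in\Cc(\R^d)$ the scalar sequence $(\mu_n(f))_{n\in\N}$ converges and is hence bounded, which is vague boundedness; and since $\Cc^\infty(\R^d)\subseteq\Cc(\R^d)$, we get $\mu_n(f)\to\mu(f)$ for all $f\in\Cc^\infty(\R^d)$, i.e. convergence in the distribution topology. (This is exactly the trivial direction already recorded in the Remark following the definition of the distribution topologies.)

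For the converse, assume $\{\mu_n : n\in\N\}$ is vaguely bounded and $(\mu_n)_{n\in\N}$ converges to $\mu$ in the distribution topology. Then $\Cc^\infty(\R^d)\subseteq D$ by definition of $D$. The only step with any content is that $\Cc^\infty(\R^d)$ is dense in $\Cc(\R^d)$ for the inductive topology: given $f\in\Cc(\R^d)$ with $\supp(f)\subseteq K$, fix a compact neighbourhood $K'$ of $K$ and a standard mollifier family $(\varphi_\eps)_{\eps>0}$; then $f*\varphi_\eps\in\Cc^\infty(\R^d)$, one has $\supp(f*\varphi_\eps)\subseteq K'$ for all sufficiently small $\eps$, and $\|f*\varphi_\eps-f\|_\infty\to 0$ by uniform continuity of $f$. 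Hence $D$ contains the inductively dense set $\Cc^\infty(\R^d)$, so $D$ is itself dense, and the preceding Proposition gives that $(\mu_n)_{n\in\N}$ converges vaguely to $\mu$.

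I do not expect a genuine obstacle here: all the work is carried by the Proposition, and the only thing to verify by hand is the inductive-topology density of $\Cc^\infty(\R^d)$ in $\Cc(\R^d)$, a routine mollification computation. The one point requiring a little care is that the mollified approximants must be kept supported inside a single fixed compact set, so that the approximation really takes place in the inductive topology and not merely locally uniformly.
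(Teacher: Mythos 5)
Your proposal is correct and is essentially the argument the paper intends: the corollary is an immediate consequence of the preceding Proposition once one notes that $\Cc^\infty(\R^d)\subseteq D$ and that $\Cc^\infty(\R^d)$ is dense in $\Cc(\R^d)$ for the inductive topology (via mollification within a fixed compact set), a density fact the paper itself uses elsewhere without comment. Nothing further is needed.
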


\begin{coro}
Let $\mu_n, \mu$ be tempered measures on $\R^d$, for all $n\in\N$. If $\{ \mu_n\, :\, n\in\N \}$ is vaguely bounded and $(\mu_n)_{n\in\N}$ converges to $\mu$ in the tempered distribution topology, then $(\mu_n)_{n\in\N}$ converges vaguely to $\mu$.
\end{coro}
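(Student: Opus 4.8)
The plan is to deduce this immediately from the preceding Corollary~\ref{coro:chara_vc}, so the argument is essentially a one-line reduction. Recall that Corollary~\ref{coro:chara_vc} states that a sequence $(\mu_n)_{n\in\N}$ of measures converges vaguely to $\mu$ if and only if $\{\mu_n : n\in\N\}$ is vaguely bounded and $(\mu_n)_{n\in\N}$ converges to $\mu$ in the distribution topology. We already have the first of these two conditions by hypothesis, so the only thing left is to upgrade ``convergence in the tempered distribution topology'' to ``convergence in the distribution topology.''

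First I would invoke the Remark immediately following the definition of the distribution and tempered distribution topologies, which records precisely the inclusion we need: convergence in the tempered distribution topology implies convergence in the distribution topology. Concretely, this is just the observation that $\Cc^\infty(\R^d) \subseteq \cS(\R^d)$, so if $\mu_n(f)\to\mu(f)$ for all $f\in\cS(\R^d)$, then in particular $\mu_n(f)\to\mu(f)$ for all $f\in\Cc^\infty(\R^d)$; no estimates are required here. Since the $\mu_n$ and $\mu$ are assumed to be tempered measures, the tempered distribution topology is defined for them, and the hypothesis that $(\mu_n)_{n\in\N}$ converges to $\mu$ in that topology therefore yields convergence in the distribution topology.

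Having both ingredients — vague boundedness of $\{\mu_n : n\in\N\}$ and convergence of $(\mu_n)_{n\in\N}$ to $\mu$ in the distribution topology — I would then apply Corollary~\ref{coro:chara_vc} to conclude that $(\mu_n)_{n\in\N}$ converges vaguely to $\mu$, which is exactly the assertion.

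There is no real obstacle here: the content of the statement is entirely carried by Corollary~\ref{coro:chara_vc}, and this corollary merely specializes it by replacing the distribution-topology hypothesis with the stronger (for tempered measures) tempered-distribution-topology hypothesis. The only point worth stating explicitly in the write-up is why the tempered distribution topology is even available, namely the standing assumption that all $\mu_n$ and $\mu$ are tempered measures.
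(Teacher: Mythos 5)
Your proof is correct and matches the paper's intended (implicit) argument exactly: the corollary is stated without proof precisely because it follows from Corollary~\ref{coro:chara_vc} together with the remark that tempered-distribution convergence implies distribution convergence via $\Cc^\infty(\R^d)\subseteq\cS(\R^d)$. Nothing is missing.
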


\section{Continuity in the vague topology} \label{sec:vague}

In the following sections, given a sequence $(\mu_n)_{n\in\N}$ of Fourier transformable measures, we want to study the convergence of $(\widehat{\mu_n})_{n\in\N}$ in different topologies. We will start with the vague topology.

If $(\mu_n)_{n\in\N}$ is a sequence of Fourier transformable measures which converges vaguely to a measure $\mu$, then $\mu$ is not necessarily Fourier transformable. For example, pick any measure $\mu$ on $\R$ which is not Fourier transformable. For every $n\in\N$, define $\mu_n:= \mu|_{[-n,n]}$. Then, $\mu_n$ is a finite measure for all $n\in\N$ (thus Fourier transformable), and $(\mu_n)_{n\in\N}$ converges to $\mu$ in the vague topology. Still, even if $\mu$ is Fourier transformable, the sequence $(\widehat{\mu_n})_{n\in\N}$ does not necessarily converge to $\widehat{\mu}$ in the vague topology as the next example shows.

\begin{example}\label{EX2}
For all $n\in\N$, consider the measure $\mu_n:= (1_{\R\setminus [-n,n]})\,\lm$, i.e.
\[
\mu_n(f)= \int_{-\infty}^{-n} f(x)\ \dd x + \int_n^\infty f(x)\ \dd x
\]
for all $f\in \Cc(\R^d)$. It is easy to see that each $\mu_n$ is a tempered measure. Moreover, consider the function $g(x):= (1_{[-1,1]}*1_{[-1,1]})(x)$. Since $\widehat{1_{[-1,1]}}(x) = 2\, \operatorname{sinc}(2\pi x)$, we have
\[
\widehat{\mu_n}(g) = \mu_n(\widehat{g}) = \int_{-\infty}^{-n} 4\, \operatorname{sinc}^2(2\pi x)\ \dd x +  \int_n^{\infty} 4\, \operatorname{sinc}^2(2\pi x)\ \dd x >0 \,.
\]
Next, define $\alpha_n:=\widehat{\mu_n}(g)$ and
\[
\nu_n := \frac{1}{\alpha_n}\, \mu_n \,,
\]
for all $n\in\N$. We will now show that the sequence $(\nu_n)_{n\in\N}$ has the following two properties:
\begin{enumerate}
\item[(i)] $\nu_n \to 0$ in the vague topology.
\item[(ii)] $\widehat{\nu_n} \not\rightarrow \widehat{0} =0$ in the vague topology.
\end{enumerate}
(i): This follows immediately from the fact that $\supp(\nu_n)=\R\setminus[-n,n]$.

\smallskip

\noindent (ii): First note that $g\in\Cc(\R)$. To be more precise, a simple computation shows that $g(x)= (2-|x|)\, 1_{[-2,2]}(x)$. Now, because of
\[
\widehat{\nu_n}(g) =  \frac{1}{\alpha_n}\, \widehat{\mu_n}(g) = 1 \qquad \text{ for all } n\in\N,
\]
the sequence $(\widehat{\nu_n})_{n\in\N}$ cannot converge to $0$.
\end{example}

 Here is a more explicit example.

\begin{example}\label{ex3}
Let $\mu_n= n^3 \delta_{n \Z}- n^3 \delta_0$, for all $n\in\N$. It is easy to see that $(\mu_n)_{n\in\N}$ converges to $0$ in the vague topology.

A simple computation shows that
\begin{displaymath}
\widehat{\mu_n}= n^2\delta_{\frac{1}{n} \Z}- n^3 \lambda \,.
\end{displaymath}
Let
\begin{displaymath}
f(x)= \left\{
\begin{array}{lc}
x^2 & \mbox{ if } 0 \leqslant x \leqslant 1, \\
2-x & \mbox{ if } 1 < x \leqslant 2, \\
0 &\mbox{ otherwise}.
\end{array}
\right.
\end{displaymath}
Then,
\begin{align*}
\widehat{\mu_n}(f)
  &= n^2\left( \sum_{k=1}^{2n} f\Big(\frac{k}{n}\Big) \right) - n^3\int_0^2 f(x)\ \dd x \\
  &= n^2\left( \sum_{k=1}^{n}\frac{k^2}{n^2} \right)+ n^2\left( \sum_{k=n+1}^{2n} \Big(2-\frac{k}{n}\Big) \right) - n^3 \Big(\frac{1}{3}+\frac{1}{2}\Big) \\
  &= \left( \frac{n(n+1)(2n+1)}{6} \right)+ n\left( \sum_{k=n+1}^{2n}2n-k \right) - \frac{5n^3}{6}\\
  &= \left( \frac{n(n+1)(2n+1)}{6} \right)+ n\left( \sum_{j=0}^{n-1} j \right) - \frac{5n^3}{6} \\
  &=  \frac{n(n+1)(2n+1)}{6}  +  \left( \frac{n^2(n-1)}{2} \right) -\frac{5n^3}{6} \\
  &=  \frac{n(n+1)(2n+1)+ 3n^2(n-1)-5n^3}{6} =  \frac{n}{6} \,,
\end{align*}
for all $n\in\N$. Thus, $(\widehat{\mu_n})_{n\in\N}$ does not converge vaguely to $0$.
\end{example}

A concrete characterisation is given by the next proposition. Recall here first that any Fourier transformable measure is a tempered measure.

\begin{proposition}
Let $(\mu_n)_{n\in\N}$ be a sequence of Fourier transformable measures on $\R^d$, and let $\mu$ be a Fourier transformable measure on $\R^d$. Then, $(\widehat{\mu_n})_{n\in\N}$ converges vaguely to $\widehat{\mu}$ if and only if
\begin{enumerate}
\item[(i)] the set $\{\widehat{\mu_n}\, :\, n\in\N\}$ is vaguely bounded, and
\item[(ii)] $\limn \mu_n(\widehat{f}) = \mu(\widehat{f})$ for all $f\in\Cc^{\infty}(\R^d)$.
\end{enumerate}
\end{proposition}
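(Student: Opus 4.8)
The plan is to combine the abstract characterisation of vague convergence (Corollary~\ref{coro:chara_vc}, or the proposition preceding it) with the defining property of the Fourier transform. The key observation is that for a Fourier transformable measure $\nu$, the identity $\widehat{\nu}(\widehat{f}) = \nu(\reallywidecheck{\widehat{f}})$ holds for suitable test functions, and $\reallywidecheck{\widehat{f}} = f^\dagger$-type manipulations let us pass between $\mu_n(\widehat{f})$ and $\widehat{\mu_n}(f)$. More precisely, for $f \in \Cc^\infty(\R^d)$ we have $\widehat{f} \in \cS(\R^d)$, and since each $\mu_n$ (and $\mu$) is a tempered measure (by \cite[Thm.~7.2]{ARMA1}, recalled just before the statement), the pairing $\mu_n(\widehat{f})$ is well-defined. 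The relation I will use is that $\widehat{\mu_n}(g) = \mu_n(\widehat{g})$ holds for all $g$ in a suitable class — in particular, since $f \in \Cc^\infty(\R^d) \subseteq KL(\R^d)$ when $f$ additionally has Fourier transform in $L^1$, one gets $\widehat{\mu_n}(f) = \mu_n(\reallywidecheck f)$; applying this with $f$ replaced by $\widehat{f}$ (noting $\widehat{\widehat{f}} = f^\dagger$) converts condition (ii) into a statement about $\widehat{\mu_n}(h)$ for $h$ ranging over a dense subclass of test functions.

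First I would prove the forward direction. If $(\widehat{\mu_n})_{n\in\N}$ converges vaguely to $\widehat{\mu}$, then $\{\widehat{\mu_n}\,:\,n\in\N\}$ is vaguely bounded by Proposition~\ref{P2} (vague convergence forces vague boundedness), giving (i). For (ii), fix $f \in \Cc^\infty(\R^d)$; then $\widehat{f} \in \cS(\R^d)$, and I want $\mu_n(\widehat f) \to \mu(\widehat f)$. Here I would invoke the transformability identity to rewrite $\mu_n(\widehat f) = \widehat{\mu_n}(\reallywidecheck{(\widehat f)}) = \widehat{\mu_n}((\widehat f)^\vee)$, where $(\widehat f)^\vee = f^\dagger \in \Cc^\infty(\R^d) \subseteq \Cc(\R^d)$; vague convergence of $\widehat{\mu_n}$ to $\widehat{\mu}$ then gives the claim directly. (One must check the identity $\mu_n(\widehat f) = \widehat{\mu_n}((\widehat f)^\vee)$ is legitimate; this follows from the $KL$-characterisation in the Remark after the definition, applied with test function $(\widehat f)^\vee$ whose Fourier transform is $f \in \Cc^\infty \subseteq L^1$ up to reflection.)

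For the converse, suppose (i) and (ii) hold. By Corollary~\ref{coro:chara_vc}, it suffices to show $\widehat{\mu_n} \to \widehat{\mu}$ in the distribution topology, i.e. $\widehat{\mu_n}(g) \to \widehat{\mu}(g)$ for all $g \in \Cc^\infty(\R^d)$, since (i) already supplies vague boundedness. Given $g \in \Cc^\infty(\R^d)$, write $g = (\widehat{f})^\vee$ with $f := (\widehat g)^\dagger \in \cS(\R^d)$; then $\widehat{\mu_n}(g) = \mu_n(\widehat f)$ by the same transformability identity. The difficulty is that here $f$ is merely Schwartz, not in $\Cc^\infty(\R^d)$, so (ii) does not apply verbatim. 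To bridge this, I would use the equi-vague-boundedness of $\{\widehat{\mu_n}\}$ together with an approximation argument in the style of the proof of the proposition preceding Corollary~\ref{coro:chara_vc}: the functions $\widehat{h}$ with $h \in \Cc^\infty(\R^d)$ are dense (in the appropriate topology on a space containing $\cS(\R^d)$, e.g. uniformly on a suitable weighted space controlled by temperedness) among $\widehat{f}$, $f \in \cS$, and the vague bound on $\widehat{\mu_n}$ translates into a uniform bound on $\mu_n$ against this weighted norm. This lets me approximate $\mu_n(\widehat f)$ by $\mu_n(\widehat h)$ uniformly in $n$, apply (ii) to $h$, and conclude. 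The main obstacle is precisely this last density/uniformity step — handling that $\Cc^\infty$ test functions on the Fourier side correspond only to a dense (not all) subset of Schwartz functions, and controlling the error uniformly in $n$ using the temperedness bounds coming from \cite[Thm.~7.2]{ARMA1} and condition (i). Everything else is a routine unwinding of the defining identities.
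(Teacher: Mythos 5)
There is a genuine gap in your converse direction, and it stems from a misreading of condition (ii). For $g\in\Cc^{\infty}(\R^d)$ the identity you need is $\widehat{\mu_n}(g)=\mu_n(\widehat{g})$, which expresses that the measure Fourier transform of a transformable measure coincides with its distributional Fourier transform (both $\mu_n$ and $\widehat{\mu_n}$ are tempered measures, and the two transforms agree on the dense subset $\{\reallywidecheck{h}\,:\,h\in\Cc^{\infty}(\R^d)\}$ of $\cS(\R^d)$ by the $KL(\R^d)$-characterisation, hence everywhere). Once you have this, the right-hand side $\mu_n(\widehat{g})$ is \emph{exactly} the quantity controlled by (ii) with $f=g\in\Cc^{\infty}(\R^d)$ --- the hat in (ii) sits on the test function precisely so that the non-compactly-supported Schwartz functions $\widehat{g}$ are covered. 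Thus (ii) is verbatim the statement that $\widehat{\mu_n}\to\widehat{\mu}$ in the distribution topology, and both implications follow at once from Corollary~\ref{coro:chara_vc} (with Proposition~\ref{P2} for the vague boundedness in the forward direction); this is the paper's one-line proof. Instead, you set $f:=(\widehat g)^{\dagger}\in\cS(\R^d)$, observe it is not compactly supported, conclude that (ii) ``does not apply verbatim'', and then propose to close the gap by approximating $\mu_n(\widehat f)$, $f\in\cS(\R^d)$, by $\mu_n(\widehat h)$ with $h\in\Cc^{\infty}(\R^d)$, uniformly in $n$. You flag this yourself as the main obstacle and do not carry it out; moreover it is not available from the stated hypotheses, since a uniform-in-$n$ bound on $\mu_n$ against a weighted sup-norm amounts to equi-temperedness of $\{\mu_n\}$, which is not assumed, and vague boundedness of $\{\widehat{\mu_n}\}$ only controls these measures on compact sets. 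So the converse is not actually proved as written.

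Two smaller points on the forward direction, whose conclusion is correct. With the paper's conventions $(\widehat f)^{\vee}=f$, not $f^{\dagger}$. And the justification you offer --- the $KL(\R^d)$-characterisation applied to the test function $(\widehat f)^{\vee}$ --- yields $\mu_n(f)=\widehat{\mu_n}(\reallywidecheck{f})$ rather than the identity $\mu_n(\widehat f)=\widehat{\mu_n}(f)$ that you actually use; since $\widehat f\notin KL(\R^d)$, the latter again requires the agreement of the measure and distributional transforms described above, not the $KL(\R^d)$-identity alone.
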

\begin{proof}
This is a consequence of Corollary~\ref{coro:chara_vc}.
\end{proof}

\begin{theorem}  \label{thm:crit_td}
Let $(\mu_n)_{n\in\N}$ be a sequence of Fourier transformable measures on $\R^d$, and let $\mu$ be a Fourier transformable measure on $\R^d$ such that
\begin{enumerate}
\item[(i)] $(\mu_n)_{n\in\N}$ converges to $\mu$ in the tempered distribution topology,
\item[(ii)] $\{\widehat{\mu_n}\, :\, n\in\N\}$ is vaguely bounded.
\end{enumerate}
Then, $(\widehat{\mu_n})_{n\in\N}$ converges vaguely to $\widehat{\mu}$.
\end{theorem}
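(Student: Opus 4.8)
The plan is to deduce this from Theorem~\ref{T3} together with the characterisation of vague convergence via the distribution topology. The key observation is that convergence in the tempered distribution topology is much stronger than what we need: it implies convergence against every Schwartz function, in particular against every $\reallywidecheck{f}$ with $f \in \Cc^\infty(\R^d)$, since such $\reallywidecheck{f}$ lies in $\cS(\R^d)$. So the Fourier-side test functions are automatically handled.

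First I would record that, by hypothesis (i), $\mu$ and all $\mu_n$ are tempered measures, so the tempered distribution convergence $\mu_n \to \mu$ makes sense and in particular $\mu_n(f) \to \mu(f)$ for all $f \in \cS(\R^d)$. Next, I would check that $\mu_n \to \mu$ vaguely: by Corollary~\ref{coro:chara_vc}, it suffices to know that $\{\mu_n : n \in \N\}$ is vaguely bounded and that $\mu_n \to \mu$ in the distribution topology. The latter is immediate from (i) (tempered distribution convergence implies distribution convergence, as noted in the Remark). For the former, vague boundedness of $\{\mu_n\}$, I would argue as follows: hypothesis (ii) says $\{\widehat{\mu_n}\}$ is vaguely bounded, hence by Proposition~\ref{P2} it is vaguely precompact, so sits inside some $\MM$ as in Proposition~\ref{P3}; but actually the cleaner route is that $\mu_n \to \mu$ in the tempered distribution topology already forces, via the uniform boundedness principle on the Fréchet space $\cS(\R^d)$, that $\{\mu_n(f) : n\}$ is bounded for every $f \in \cS(\R^d)$, in particular for every $f \in \Cc(\R^d)$, which is precisely vague boundedness of $\{\mu_n\}$. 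Either way, $\mu_n \to \mu$ vaguely.

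Now I would invoke Theorem~\ref{T3}: we have a sequence $(\mu_n)_{n\in\N}$ of Fourier transformable measures converging vaguely to the tempered measure $\mu$, and $\{\widehat{\mu_n} : n \in \N\}$ is equi translation bounded --- wait, here one must be careful: Theorem~\ref{T3} as stated in the introduction requires equi translation boundedness of $\{\widehat{\mu_n}\}$, whereas hypothesis (ii) only gives vague boundedness. So the honest plan is \emph{not} to quote Theorem~\ref{T3} verbatim but to reprove its conclusion under the weaker hypothesis (ii), exploiting that (i) is stronger than mere vague convergence. Concretely: for $f \in \Cc^\infty(\R^d)$ we have $\reallywidecheck{f} \in \cS(\R^d)$, so by (i),
\[
\widehat{\mu_n}(f) = \mu_n(\reallywidecheck{f}) \longrightarrow \mu(\reallywidecheck{f}) = \widehat{\mu}(f).
\]
Thus $\widehat{\mu_n} \to \widehat{\mu}$ in the distribution topology. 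Combined with vague boundedness of $\{\widehat{\mu_n}\}$ from (ii), Corollary~\ref{coro:chara_vc} gives $\widehat{\mu_n} \to \widehat{\mu}$ vaguely, which is the claim.

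The main obstacle, and the only real subtlety, is the interchange of Fourier transform and duality pairing in the identity $\widehat{\mu_n}(f) = \mu_n(\reallywidecheck{f})$: this is the content of the Remark after the definition of Fourier transformability, valid for $f \in KL(\R^d)$, and $\Cc^\infty(\R^d) \subseteq KL(\R^d)$ since a smooth compactly supported function has Schwartz (hence $L^1$) Fourier transform; one must also make sure $\mu(\reallywidecheck{f})$ really equals $\widehat{\mu}(f)$ for the limit measure, which again is the same Remark applied to $\mu$. Once that bookkeeping is in place, the proof is a two-line application of Corollary~\ref{coro:chara_vc}. I would also double-check that no positivity or translation-boundedness is secretly needed --- it is not, because we never pass through Theorem~\ref{T3}'s proof machinery; the strength of tempered-distribution convergence replaces it entirely.
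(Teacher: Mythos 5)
Your argument is correct and is essentially the paper's own proof: the paper deduces the theorem from the proposition immediately preceding it, which in turn is Corollary~\ref{coro:chara_vc} applied to $(\widehat{\mu_n})_{n\in\N}$, using exactly your observation that for $f\in\Cc^\infty(\R^d)$ the function $\reallywidecheck{f}$ (equivalently $\widehat{f}$) lies in $\cS(\R^d)$, so hypothesis (i) yields distributional convergence of the Fourier transforms while (ii) supplies the vague boundedness. Your preliminary detour establishing that $\mu_n\to\mu$ vaguely is harmless but unnecessary, and you correctly recognize that Theorem~\ref{T3} should not be invoked here.
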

\begin{proof}
This follows from the previous proposition.
\end{proof}

So far, we have always assumed that $\mu$ is a Fourier transformable measure. Next, we want to find sufficient conditions which also imply that $\mu$ is Fourier transformable. In order to do so, we need some preparation.

\begin{lemma}\label{L1}
For all $\mu \in \cM^\infty(\R^d)$ and all $f \in \Cz(\R^d)$ with $\| (1+|\bullet|^2)^d\, f \|_\infty < \infty$, we have
\[
| \mu(f)| \leqslant C\, \| (1+|\bullet|^2)^d\, f \|_\infty\, \| \mu \|_{[-\frac{1}{2},\frac{1}{2}]^d} \,,
\]
where
\[
C:=   \sum_{n \in \Z^d}\sup_{x\, \in\, n+[-\frac{1}{2},\frac{1}{2}]^d}  \frac{1}{(1+|x|^2)^d} < \infty \,.
\]
\end{lemma}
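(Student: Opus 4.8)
The plan is to exploit the decomposition of $\R^d$ into the unit-cube tiling $\{n+[-\tfrac12,\tfrac12]^d : n\in\Z^d\}$ and to control $\mu$ on each tile by its translation bound, while absorbing the growth in $n$ into the weight $(1+|\bullet|^2)^d$. First I would write, for $f\in\Cz(\R^d)$ with $M:=\|(1+|\bullet|^2)^d f\|_\infty<\infty$,
\[
|\mu(f)| \leqslant |\mu|(|f|) = \sum_{n\in\Z^d} \int_{n+[-\frac12,\frac12]^d} |f(x)|\, \dd|\mu|(x).
\]
On the tile $n+[-\frac12,\frac12]^d$ one has $|f(x)| = \dfrac{(1+|x|^2)^d |f(x)|}{(1+|x|^2)^d} \leqslant M \cdot \sup_{x\in n+[-\frac12,\frac12]^d} \dfrac{1}{(1+|x|^2)^d}$, so the integral over that tile is bounded by $M\,\bigl(\sup_{x\in n+[-\frac12,\frac12]^d}(1+|x|^2)^{-d}\bigr)\,|\mu|(n+[-\frac12,\frac12]^d)$. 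Now $|\mu|(n+[-\frac12,\frac12]^d) \leqslant \|\mu\|_{[-\frac12,\frac12]^d}$ by definition of the translation-bound norm (note $\mu\in\cM^\infty(\R^d)$ guarantees this is finite). Summing over $n$ gives
\[
|\mu(f)| \leqslant M\, \|\mu\|_{[-\frac12,\frac12]^d} \sum_{n\in\Z^d}\sup_{x\in n+[-\frac12,\frac12]^d}\frac{1}{(1+|x|^2)^d} = C\, M\, \|\mu\|_{[-\frac12,\frac12]^d},
\]
which is the claimed inequality.

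It remains to check that $C<\infty$. For this I would bound the supremum on each tile by evaluating the weight at the point of $n+[-\frac12,\frac12]^d$ closest to the origin: for $|n|\geqslant 1$ one has $|x| \geqslant |n| - \tfrac{\sqrt d}{2}$ on that tile (using $\mathrm{diam}\,[-\tfrac12,\tfrac12]^d = \sqrt d$), hence $\sup_{x\in n+[-\frac12,\frac12]^d}(1+|x|^2)^{-d} \leqslant \bigl(1+(|n|-\tfrac{\sqrt d}{2})^2\bigr)^{-d}$ once $|n| > \sqrt d/2$. Comparing the tail sum $\sum_{|n| \text{ large}} (|n|-\tfrac{\sqrt d}{2})^{-2d}$ with the integral $\int_{\R^d} (1+|x|)^{-2d}\,\dd x$, which converges because $2d > d$, shows the series converges; the finitely many remaining terms are harmless. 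This establishes $C<\infty$.

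The only mild subtlety — and the place I would be most careful — is the interchange of sum and integral: since $|f|$ is nonnegative and the tiles are essentially disjoint (overlapping only on a $|\mu|$-null boundary, or one simply uses a genuine partition such as $n+(-\tfrac12,\tfrac12]^d$), Tonelli's theorem applies and the rearrangement is legitimate. A secondary point is simply that the estimate should first be established for, say, $f\in\Cc(\R^d)$ or nonnegative $f$ and then extended; but since the right-hand side is finite and everything is absolutely convergent, no real limiting argument is needed. I expect the whole proof to be short, with the verification $C<\infty$ being the only genuine (and still routine) computation.
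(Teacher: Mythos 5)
Your proof is correct and follows essentially the same route as the paper: decompose $\R^d$ into the translates $n+[-\tfrac12,\tfrac12]^d$, insert the weight $(1+|x|^2)^d$ on each cube, bound $|\mu|(n+[-\tfrac12,\tfrac12]^d)$ by $\|\mu\|_{[-\frac12,\frac12]^d}$, and sum. The only differences are cosmetic: you additionally verify $C<\infty$ (which the paper merely asserts), and note that the overlap of cube boundaries is harmless since only the inequality $|\mu|(|f|)\leqslant\sum_n\int_{n+[-\frac12,\frac12]^d}|f|\,\dd|\mu|$ is needed.
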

\begin{proof}
First, one has
\begin{align*}
| \mu(f)|
    &= \left| \int_{\R^d} f(x)\ \dd \mu(x) \right| \leqslant \sum_{n \in \Z^d}
       \left| \int_{n+[-\frac{1}{2},\frac{1}{2}]^d} f(x)\ \dd \mu(x) \right|  \\
    &=\sum_{n \in \Z^d} \left| \int_{n+[-\frac{1}{2},\frac{1}{2}]^d}
       \frac{1}{(1+|x|^2)^d}\, (1+|x|^2)^d\, f(x)\ \dd \mu(x) \right|  \\
    &\leqslant \sum_{n \in \Z^d}  \int_{n+[-\frac{1}{2},\frac{1}{2}]^d}\left|
       \frac{1}{(1+|x|^2)^d}\, (1+|x|^2)^d\, f(x)\right|\ \dd |\mu|(x) \\
    &\leqslant \sum_{n \in \Z^d} \| (1+|\bullet|^2)^d\, f \|_\infty  \int_{n+[-
      \frac{1}{2},\frac{1}{2}]^d} \frac{1}{(1+|x|^2)^d}\ \dd
       |\mu|(x)   \\
    &\leqslant \sum_{n \in \Z^d} \| (1+|\bullet|^2)^d\, f \|_\infty \left(\sup_{
       x\,\in  \,n+[-\frac{1}{2},\frac{1}{2}]^d}  \frac{1}{(1+|x|^2)^d} \right)
       |\mu|\Big(n+\big[-\frac{1}{2},\frac{1}{2}\big]^d\Big)   \\
    &\leqslant C\, \| (1+|\bullet|^2)^d\, f \|_\infty\,  \| \mu \|_{[-\frac{1}{2},
       \frac{1}{2}]^d}  \,,
\end{align*}
where
\[
C:=   \sum_{n \in \Z^d}\sup_{x\, \in\, n+[-\frac{1}{2},\frac{1}{2}]^d}  \frac{1}{(1+|x|^2)^d} < \infty \,.
\]
The claim follows.
\end{proof}

\begin{proposition} \label{prop:vtdd}
Let $(\mu_n)_{n\in\N}$ be a sequence of measures on $\R^d$, and let $\mu$ be a measure on $\R^d$. Consider the following statements:
\begin{itemize}
  \item [(i)] $(\mu_n)_{n\in\N}$ converges vaguely to $\mu$.
  \item [(ii)] $(\mu_n)_{n\in\N}$ converges to $\mu$ in the tempered distribution topology.
  \item [(iii)] $(\mu_n)_{n\in\N}$ converges to $\mu$ in the distribution topology.
\end{itemize}
Then,
\begin{enumerate}
\item[$\bullet$] (ii) implies (iii),
\item[$\bullet$] (iii) implies (i) if $(\mu_n)_{n\in\N}$ is vaguely bounded,
\item[$\bullet$] (i) implies (ii) if $\{\mu_n\, :\, n\in\N\}\cup \{\mu\}$ is equi translation bounded.
\end{enumerate}
\end{proposition}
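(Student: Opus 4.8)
The plan is to prove the three implications in order, using the characterisation of vague convergence via the distribution topology (Corollary~\ref{coro:chara_vc}) and the estimate of Lemma~\ref{L1} for the delicate last implication.

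The first implication, (ii) $\Rightarrow$ (iii), is immediate: $\Cc^\infty(\R^d)\subseteq\cS(\R^d)$, so convergence tested against all Schwartz functions gives convergence tested against all compactly supported smooth functions. (One should note here that if $(\mu_n)$ converges to $\mu$ in the tempered distribution topology then by definition all the $\mu_n$ and $\mu$ are tempered measures, so the statement makes sense.) The second implication, (iii) $\Rightarrow$ (i) under vague boundedness, is exactly Corollary~\ref{coro:chara_vc} (or rather the easy half of the preceding proposition): vague boundedness of $\{\mu_n : n\in\N\}$ together with $\mu_n(f)\to\mu(f)$ for all $f\in\Cc^\infty(\R^d)$ forces $\mu_n(f)\to\mu(f)$ for all $f\in\Cc(\R^d)$, since $\Cc^\infty(\R^d)$ is dense in $\Cc(\R^d)$ in the inductive topology and the proof of that proposition only used density of the test set $D$.

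The third implication, (i) $\Rightarrow$ (ii) under the equi-translation-boundedness hypothesis, is the heart of the matter and the main obstacle. Here I must upgrade vague convergence to convergence against every Schwartz function $f\in\cS(\R^d)$. The idea is a truncation-and-tail argument. Fix $f\in\cS(\R^d)$ and $\eps>0$; pick a cutoff function $\chi\in\Cc^\infty(\R^d)$ with $0\leqslant\chi\leqslant 1$ and $\chi\equiv 1$ on a large ball $B_R$. Write $f=\chi f+(1-\chi)f$. The compactly supported smooth piece $\chi f$ is handled by vague convergence directly. For the tail $(1-\chi)f$, I apply Lemma~\ref{L1}: since $(1-\chi)f\in\Cz(\R^d)$ and $\|(1+|\bullet|^2)^d\,(1-\chi)f\|_\infty$ is controlled by the Schwartz seminorms of $f$ restricted to $\R^d\setminus B_R$, and since $\{\mu_n\}\cup\{\mu\}$ is equi translation bounded so that $\sup_n\|\mu_n\|_{[-1/2,1/2]^d}<\infty$ and $\|\mu\|_{[-1/2,1/2]^d}<\infty$, Lemma~\ref{L1} gives
\[
|\mu_n((1-\chi)f)|+|\mu((1-\chi)f)| \leqslant C\,\|(1+|\bullet|^2)^d\,(1-\chi)f\|_\infty\,\Big(\sup_n\|\mu_n\|_{[-\frac12,\frac12]^d}+\|\mu\|_{[-\frac12,\frac12]^d}\Big),
\]
which can be made $<\eps/2$ by taking $R$ large, uniformly in $n$. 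Then for the fixed $\chi$ one uses vague convergence $\mu_n(\chi f)\to\mu(\chi f)$ to absorb the remaining error for $n$ large, and the triangle inequality finishes. The one technical point to be careful about is that $(1-\chi)f$ does not have compact support, so Lemma~\ref{L1} (stated for $\Cz$ functions with finite weighted sup norm, not merely $\Cc$ functions) is exactly what is needed and must be invoked in that generality; also one should remark at the outset that under the equi-translation-boundedness hypothesis each $\mu_n$ and $\mu$ is automatically a tempered measure, so assertion (ii) is meaningful.
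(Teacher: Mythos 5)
Your proposal is correct and follows essentially the same route as the paper: the implication (i) $\Rightarrow$ (ii) is proved by approximating $f\in\cS(\R^d)$ by a compactly supported function in the weighted sup norm $\|(1+|\bullet|^2)^d\,\cdot\,\|_\infty$ and invoking Lemma~\ref{L1} together with equi translation boundedness, while (ii) $\Rightarrow$ (iii) is trivial and (iii) $\Rightarrow$ (i) is the content of Corollary~\ref{coro:chara_vc}. Your explicit cutoff $g=\chi f$ is just a concrete instance of the approximant the paper chooses abstractly, so the two arguments coincide.
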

\begin{proof}
(i)$\implies$(ii) Since $\{ \mu_n \, :\, n\in\N\} \cup \{ \mu \}$ is a set of equi translation bounded measures, we can find a constant $C_1>0$ such that
\begin{equation}  \label{eq:etb1}
 \| \mu \|_{[-\frac{1}{2},\frac{1}{2}]^d}   < C_1  \quad \text{ and } \quad
   \| \mu_n \|_{[-\frac{1}{2},\frac{1}{2}]^d}   < C_1 \,,
\end{equation}
for all $n\in\N$. Let $\eps >0$. Since $f \in \cS(\R^d)$, we can then find some $g \in \Cc(\R^d)$ such that
\begin{equation} \label{eq:etb2}
\| (1+|\bullet|^2)^d\, (f-g) \|_\infty < \frac{\eps}{3 \,  C \, C_1} \,.
\end{equation}
Then, by Eqs.~\eqref{eq:etb1}, \eqref{eq:etb2} and Lemma~\ref{L1}, we have
\[
\left| \mu (f-g) \right| < \frac{\eps}{3}   \quad \text{ and } \quad
\left| \mu_n (f-g) \right| < \frac{\eps}{3} \,,
\]
for all $n\in\N$.
Finally, since $(\mu_n)_{n\in\N}$ converges to $\mu$ in the vague topology and $g \in \Cc(\R^d)$, we can find some $N\in\N$ such that, for all $n >N$, we have
\[
\left| \mu_n(g) -\mu(g) \right| <\frac{\eps}{3} \,.
\]
This yields
\[
\left| \mu_n(f) -\mu(f) \right| < \eps
\]
for all $n >N$.

\smallskip

\noindent (ii)$\implies$(iii) Trivial.

\smallskip

\noindent (iii)$\implies$(i) Since $(\mu_n)_{n\in\N}$ is vaguely bounded, to prove that $\mu_n \to \mu$ vaguely it suffices to show that each vague limit point of $(\mu_n)_{n\in\N}$ is equal to $\mu$.

Let $\nu$ be a vague limit point of $(\mu_n)_{n\in\N}$. Then, there exists some subsequence $(n_k)_{k\in\N}$ such that $(\mu_{n_k})_{k\in\N}$ converges vaguely to $\nu$. It follows that, for all $f \in \Cc^\infty(\R^d)$, we have
\begin{displaymath}
\mu(f)= \lim_{n\to\infty} \mu_n(f)= \lim_{k\to\infty} \mu_{n_k}(f)= \nu(f) \,.
\end{displaymath}
Since $\mu =\nu$ on $\Cc^\infty(\R^d)$ and $\Cc^\infty(\R^d)$ is dense in $\Cc(\R^d)$, we get $\mu=\nu$.
\end{proof}

\begin{coro}
Let $(\mu_n)_{n\in\N}$ be a sequence of measures on $\R^d$ which is equi translation bounded, and let $\mu$ be a measure on $\R^d$. Then, the following statements are equivalent:
\begin{itemize}
  \item [(i)] $(\mu_n)_{n\in\N}$ converges vaguely to $\mu$.
  \item [(ii)] $(\mu_n)_{n\in\N}$ converges to $\mu$ in the tempered distribution topology.
  \item [(iii)] $(\mu_n)_{n\in\N}$ converges to $\mu$ in the distribution topology.
\end{itemize}
\end{coro}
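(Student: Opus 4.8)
The plan is to obtain the equivalence directly from Proposition~\ref{prop:vtdd}, which already supplies the implications (i)$\Rightarrow$(ii) (under equi translation boundedness of $\{\mu_n:n\in\N\}\cup\{\mu\}$), (ii)$\Rightarrow$(iii), and (iii)$\Rightarrow$(i) (under vague boundedness of $(\mu_n)_{n\in\N}$). Two preliminary observations make these hypotheses available here. First, an equi translation bounded sequence is vaguely bounded: $\sup_n\|\mu_n\|_K<\infty$ for all compact $K$ forces $\sup_n|\mu_n|(K)<\infty$ for all compact $K$, which is condition (iii) of Proposition~\ref{P2}. Second, as I explain next, the vague limit $\mu$ of such a sequence is itself translation bounded (in particular tempered, so that statement (ii) is even meaningful for it).

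To see the second point, I would first record a lower semicontinuity property of the total variation under vague convergence: if $\mu_n\to\mu$ vaguely then, for every $f\in\Cc(\R^d)$ with $f\geqslant0$ and every $g\in\Cc(\R^d)$ with $|g|\leqslant f$, we have $|\mu(g)|=\lim_n|\mu_n(g)|\leqslant\liminf_n|\mu_n|(f)$; taking the supremum over such $g$ and using the variational description of the total variation gives $|\mu|(f)\leqslant\liminf_n|\mu_n|(f)$. Now fix an arbitrary compact set $K\subseteq\R^d$ and pick, via Urysohn's lemma, $f\in\Cc(\R^d)$ with $0\leqslant f\leqslant1$, $f\equiv1$ on $K$, and $K':=\supp(f)$ compact. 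Applying the inequality to $T_tf$ for an arbitrary $t\in\R^d$, and bounding $|\mu_n|(T_tf)\leqslant|\mu_n|(t+K')\leqslant\|\mu_n\|_{K'}$, gives
\[
|\mu|(t+K)\;\leqslant\;|\mu|(T_tf)\;\leqslant\;\liminf_{n\to\infty}|\mu_n|(T_tf)\;\leqslant\;\sup_{n\in\N}\|\mu_n\|_{K'}\;<\;\infty\,,
\]
with the bound independent of $t$; hence $\|\mu\|_K<\infty$. As $K$ was arbitrary, $\mu$ is translation bounded, so $\{\mu_n:n\in\N\}\cup\{\mu\}$ is equi translation bounded.

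With both observations in hand the corollary is immediate: (i)$\Rightarrow$(ii) by the third bullet of Proposition~\ref{prop:vtdd} (applicable since $\mu$ is translation bounded and the whole family is thus equi translation bounded), (ii)$\Rightarrow$(iii) by the first bullet, and (iii)$\Rightarrow$(i) by the second bullet (applicable by vague boundedness); these three implications close the cycle. I expect the only genuine work to be the lower semicontinuity of $f\mapsto|\mu|(f)$ needed to see that the limit is translation bounded --- a mild obstacle --- everything else being a direct appeal to Proposition~\ref{prop:vtdd}.
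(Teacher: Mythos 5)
Your proposal is correct and follows the same route the paper intends: the corollary is stated as an immediate consequence of Proposition~\ref{prop:vtdd}, with equi translation boundedness supplying the vague boundedness needed for (iii)$\Rightarrow$(i) and the equi translation boundedness of the full family needed for (i)$\Rightarrow$(ii). Your extra lemma --- that the vague limit of an equi translation bounded sequence is itself translation bounded, via lower semicontinuity of $f\mapsto|\mu|(f)$ under vague convergence --- correctly and carefully fills a step the paper leaves implicit, since the corollary's hypothesis only mentions the sequence $(\mu_n)_{n\in\N}$ and not $\mu$.
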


\begin{coro} \label{coro:hilfe1}
Let $(\mu_n)_{n\in\N}$ be a sequence of measures on $\R^d$, and let $\mu$ be a Fourier transformable measure on $\R^d$ such that
\begin{enumerate}
  \item [(i)] $(\mu_n)_{n\in\N}$ converges vaguely to $\mu$,
  \item[(ii)] $\{ \mu_n\, :\, n\in\N\}$ is a set of equi translation bounded measures.
\end{enumerate}
Then, $(\widehat{\mu_n})_{n\in\N}$ converges to $\widehat{\mu}$ in the tempered distribution topology.
\end{coro}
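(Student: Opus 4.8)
The plan is to transfer the vague convergence to the tempered distribution topology at the level of the $\mu_n$ by means of Proposition~\ref{prop:vtdd}, and then to push it through the Fourier transform, which is sequentially continuous for the weak-$*$ (i.e.\ tempered distribution) topology on $\cS'(\R^d)$. Throughout, the $\mu_n$ are of course understood to be Fourier transformable (otherwise $\widehat{\mu_n}$ is not defined); by \cite[Thm.~7.2]{ARMA1} the $\mu_n$ and $\mu$ are then tempered measures.

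First I would check that $\mu$ is translation bounded, so that the enlarged family $\{\mu_n : n\in\N\}\cup\{\mu\}$ is equi translation bounded. Fix $t\in\R^d$ and choose $h_t\in\Cc(\R^d)$ with $0\leqslant h_t\leqslant 1$, $h_t\equiv 1$ on $t+[-\frac{1}{2},\frac{1}{2}]^d$ and $\supp(h_t)\subseteq t+[-1,1]^d$. For every $g\in\Cc(\R^d)$ with $|g|\leqslant h_t$ one has, using (i), that $|\mu(g)|=\lim_{n\to\infty}|\mu_n(g)|\leqslant\sup_n|\mu_n|(t+[-1,1]^d)$; covering $[-1,1]^d$ by finitely many translates of the unit cube and invoking (ii), the right-hand side is bounded by a constant $C$ independent of $t$. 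Taking the supremum over all such $g$ gives $|\mu|(t+[-\frac{1}{2},\frac{1}{2}]^d)\leqslant|\mu|(h_t)\leqslant C$ for every $t$, hence $\mu\in\cM^\infty(\R^d)$.

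Next I would apply Proposition~\ref{prop:vtdd}: as $(\mu_n)_{n\in\N}$ converges vaguely to $\mu$ and $\{\mu_n : n\in\N\}\cup\{\mu\}$ is equi translation bounded, its implication (i)$\Rightarrow$(ii) shows that $(\mu_n)_{n\in\N}$ converges to $\mu$ in the tempered distribution topology. It then remains to carry this through the Fourier transform. For this one uses that, for a translation bounded Fourier transformable measure $\nu$, the defining relation of $\widehat\nu$ extends from $KL(\R^d)$ to all of $\cS(\R^d)$: $\widehat\nu$ is a tempered measure and $\widehat\nu(\varphi)=\nu(\widehat\varphi)$ for every $\varphi\in\cS(\R^d)$. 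Applying this to each $\mu_n$ and to $\mu$, and using that $\widehat\varphi\in\cS(\R^d)$ whenever $\varphi\in\cS(\R^d)$, the convergence $\mu_n(\widehat\varphi)\to\mu(\widehat\varphi)$ obtained in the previous step reads $\widehat{\mu_n}(\varphi)\to\widehat\mu(\varphi)$ for all $\varphi\in\cS(\R^d)$, which is precisely the convergence of $(\widehat{\mu_n})_{n\in\N}$ to $\widehat\mu$ in the tempered distribution topology.

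The first step is the routine lower-semicontinuity of the total variation under vague limits, and the final manipulation is the triviality that the Fourier transform is weak-$*$ continuous on $\cS'(\R^d)$. The step I expect to be the main obstacle is the extension used in the third paragraph: that the identity $\widehat\nu(\varphi)=\nu(\widehat\varphi)$, which the definition only supplies for $\varphi$ whose Fourier transform lies in $\Cc(\R^d)$, passes to all Schwartz functions, and --- underlying it --- that $\widehat{\mu_n}$ and $\widehat\mu$ are genuine tempered measures so that the statement is meaningful at all. This is where translation boundedness of the $\mu_n$ and of $\mu$, an estimate of the type of Lemma~\ref{L1}, and standard facts about the Fourier transform of transformable measures are really used.
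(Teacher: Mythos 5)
Your proof is correct and follows exactly the route the paper intends: the statement is presented there as an immediate consequence of Proposition~\ref{prop:vtdd} together with the compatibility of the measure-theoretic and tempered-distributional Fourier transforms. You have in fact supplied the two details the paper leaves implicit --- that the vague limit $\mu$ of an equi translation bounded family is itself translation bounded (so that Proposition~\ref{prop:vtdd}(i)$\Rightarrow$(ii) applies to $\{\mu_n\}\cup\{\mu\}$), and that $\widehat{\nu}(\varphi)=\nu(\widehat{\varphi})$ extends from $KL(\R^d)$ to all of $\cS(\R^d)$ by density once one knows $\widehat{\nu}$ is a (translation bounded, hence tempered) measure --- both of which are handled in the paper only later, inside the proof of Theorem~\ref{T3}.
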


\begin{example}\label{EX1}
Let $(a_n)_{n\in\N}$ be any sequence of non-zero real numbers which converges to $0$. Let $\mu_n =\frac{1}{a_n} (\delta_{a_n}-\delta_0) $, for all $n\in\N$. Then, as tempered distributions, $(\mu_n)_{n\in\N}$ converges to the distribution
\begin{displaymath}
\mathcal{D}(f)=f'(0) \,.
\end{displaymath}
\end{example}

\begin{example} \label{ex:tempdis}
Let
\begin{displaymath}
\mu_n:= n \delta_{\frac{1}{n}}+n \delta_{-\frac{1}{n}} -2n \delta_0,\qquad n\in\N \,.
\end{displaymath}
Then, $(\mu_n)_{n\in\N}$ converges in the tempered distribution topology to $0$ but it is not vaguely convergent.
\end{example}
\begin{proof}
For all $n\in\N$,
\begin{displaymath}
\mu_n= \left( \frac{1}{1/n} (\delta_{1/n}-\delta_0) \right)-\left( \frac{1}{-1/n} (\delta_{-1/n}-\delta_0) \right)
\end{displaymath}
is the difference of two sequences of measures, which converge to $\mathcal{D}$ by Example~\ref{EX1}. Therefore, $(\mu_n)_{n\in\N}$ converges in the tempered distribution topology to $\mathcal{D}-\mathcal{D}=0$.

Next, let
\begin{displaymath}
f(x)=
\begin{cases}
\sqrt{x} & \mbox{ if } 0 \leqslant x \leqslant 1, \\
2-x & \mbox{ if } 1 < x \leqslant 2, \\
0 &\mbox{otherwise}.
\end{cases}
\end{displaymath}
Then,
\begin{displaymath}
\mu_n(f)=\frac{n}{\sqrt{n}} \xrightarrow{n\to\infty} \infty  \,.
\end{displaymath}
This shows that $(\mu_n)_{n\in\N}$ cannot be vaguely convergent.
\end{proof}

Another consequence of Proposition~\ref{prop:vtdd} is the following proposition.

\begin{proposition}\label{ft td}
The Fourier transform, considered as a mapping from $(\cM^{\infty}(\R^d),\|\cdot\|_K)$ to $\cS'(\R^d)$, is continuous.
\end{proposition}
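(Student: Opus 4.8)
The plan is to factor the map as the composition of the inclusion $(\cM^\infty(\R^d),\|\cdot\|_K)\hookrightarrow\cS'(\R^d)$ with the Fourier transform acting on $\cS'(\R^d)$. This is legitimate because every translation bounded measure has at most polynomial growth and is therefore a tempered measure, so that its Fourier transform is a well defined element of $\cS'(\R^d)$. Since the Fourier transform is a topological automorphism of $\cS'(\R^d)$ with the weak-$*$ topology (which is the one named ``tempered distribution topology'' here), the whole statement reduces to continuity of the inclusion; and as its domain is a normed space, it suffices to check that $\|\mu_n-\mu\|_K\to0$ implies that $(\mu_n)_{n\in\N}$ converges to $\mu$ in the tempered distribution topology.

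To prove that implication I would apply Lemma~\ref{L1} to the translation bounded measure $\mu_n-\mu$. For any $f\in\cS(\R^d)$ one has $f\in\Cz(\R^d)$ and $\|(1+|\bullet|^2)^d f\|_\infty<\infty$, so Lemma~\ref{L1} gives $|(\mu_n-\mu)(f)|\leqslant C\,\|(1+|\bullet|^2)^d f\|_\infty\,\|\mu_n-\mu\|_{[-\frac12,\frac12]^d}$. The one point needing a word is that the norm topology is defined with an arbitrary compact set $K$ equal to the closure of its interior rather than with the unit cube; this is handled by the elementary covering estimate $\|\nu\|_{[-\frac12,\frac12]^d}\leqslant N\,\|\nu\|_K$, valid for every $\nu\in\cM(\R^d)$, where $N$ is the finite number of translates of $K^{\circ}$ needed to cover $[-\frac12,\frac12]^d$. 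Combining these, $|(\mu_n-\mu)(f)|\to0$ for every $f\in\cS(\R^d)$, which is exactly convergence in the tempered distribution topology; then $\widehat{\mu_n}(f)=\mu_n(\widehat f)\to\mu(\widehat f)=\widehat\mu(f)$ for all $f\in\cS(\R^d)$ completes the argument.

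An equally short alternative bypasses Lemma~\ref{L1}: the convergence $\|\mu_n-\mu\|_K\to0$ forces $(\mu_n)_{n\in\N}$ to be equi translation bounded (eventually $\|\mu_n\|_K\leqslant\|\mu\|_K+1$, and a uniform bound over the single set $K$ spreads to all compact sets by the same covering argument) and to converge vaguely to $\mu$ (cover the support of a test function by finitely many translates of $K$), so that Proposition~\ref{prop:vtdd} yields convergence in the tempered distribution topology and one concludes as above. I do not anticipate a genuine obstacle here: the statement is really a repackaging of Lemma~\ref{L1} (or of Proposition~\ref{prop:vtdd}) together with the standard continuity of the Fourier transform on $\cS'(\R^d)$. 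The only spots that deserve an explicit line are the remark that translation boundedness implies temperedness (so that $\widehat{\mu}\in\cS'(\R^d)$ even makes sense) and the comparison of $\|\cdot\|_K$ for a general admissible $K$ with the unit cube appearing in Lemma~\ref{L1}.
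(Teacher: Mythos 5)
Your proof is correct and follows essentially the same route as the paper: factor through the inclusion $(\cM^{\infty}(\R^d),\|\cdot\|_K)\hookrightarrow\cS'(\R^d)$ and invoke the continuity of the Fourier transform on $\cS'(\R^d)$; your ``alternative'' (equi translation boundedness plus Proposition~\ref{prop:vtdd}) is literally the paper's argument, while your primary route simply unpacks it into a direct quantitative application of Lemma~\ref{L1}, giving $|(\mu_n-\mu)(f)|\leqslant C'\,\|(1+|\bullet|^2)^d f\|_\infty\,\|\mu_n-\mu\|_K$. Your explicit remarks on temperedness of translation bounded measures and on the equivalence of the norms $\|\cdot\|_K$ and $\|\cdot\|_{[-\frac12,\frac12]^d}$ fill in two steps the paper leaves implicit.
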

\begin{proof}
First note that the set $\{\mu_n\, :\, n\in\N\}\cup \{\mu\}$ is equi translation bounded because $(\mu_n)_{n\in\N}$ converges to $\mu$ with respect to $\|\cdot\|_K$. Now, by Proposition~\ref{prop:vtdd}, $(\mu_n)_{n\in\N}$ converges to $\mu$ in the tempered distribution topology. But this implies that $(\widehat{\mu_n})_{n\in\N}$ converges to $\widehat{\mu}$ in the tempered distribution topology.
\end{proof}

\begin{coro}
Let $(\mu_n)_{n\in\N}$ be a sequence of Fourier transformable measures in the space $(\cM^{\infty}(\R^d),\|\cdot\|_K)$ which converges to some measure $\mu$. If $(\widehat{\mu_n})_{n\in\N}$ has a vague cluster point, then the distributional Fourier transform of $\mu$ is a measure, and it is the cluster point of $(\widehat{\mu_n})_{n\in\N}$.
\end{coro}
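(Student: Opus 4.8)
The plan is to deduce the statement from Proposition~\ref{ft td}, together with the elementary fact (recorded in Section~\ref{Sect 2}) that vague convergence of a sequence of measures implies its convergence in the distribution topology. First I would observe that the hypothesis $\mu_n\to\mu$ in $(\cM^\infty(\R^d),\|\cdot\|_K)$ forces $\{\mu_n:n\in\N\}\cup\{\mu\}$ to be equi translation bounded (just as in the proof of Proposition~\ref{ft td}); in particular $\mu\in\cM^\infty(\R^d)$, so $\mu$ is a tempered measure and its distributional Fourier transform $\widehat{\mu}\in\cS'(\R^d)$ is defined. Proposition~\ref{ft td} then gives that $(\widehat{\mu_n})_{n\in\N}$ converges to $\widehat{\mu}$ in the tempered distribution topology, hence also in the distribution topology, i.e.\ $\widehat{\mu_n}(f)\to\widehat{\mu}(f)$ for every $f\in\Cc^\infty(\R^d)$.

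Next I would bring in the hypothesis on cluster points. Let $\nu$ be a vague cluster point of $(\widehat{\mu_n})_{n\in\N}$ and choose a subsequence with $\widehat{\mu_{n_k}}\to\nu$ vaguely. Since vague convergence implies convergence in the distribution topology, $\widehat{\mu_{n_k}}(f)\to\nu(f)$ for all $f\in\Cc^\infty(\R^d)$; comparing this with the limit obtained above, along the same subsequence, gives $\nu(f)=\widehat{\mu}(f)$ for all $f\in\Cc^\infty(\R^d)$. As $\Cc^\infty(\R^d)$ is dense in $\Cc(\R^d)$ in the inductive topology, the measure $\nu$ is uniquely determined by these values; hence $\nu$ is the unique vague cluster point of $(\widehat{\mu_n})_{n\in\N}$, and the identity $\widehat{\mu}=\nu$ on $\Cc^\infty(\R^d)$ says precisely that the distributional Fourier transform of $\mu$ is (represented by) the measure $\nu$. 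This establishes both conclusions.

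I do not expect a genuine obstacle here: the mathematical content is already packaged in Proposition~\ref{ft td} and the ``vague $\Rightarrow$ distributional'' implications of Section~\ref{Sect 2}. The two points that need a little care are: (a) spelling out what it means for the distributional Fourier transform of $\mu$ to ``be a measure'', namely that $\widehat{\mu}$ restricted to $\Cc^\infty(\R^d)$ agrees with the measure $\nu$, and invoking density of $\Cc^\infty(\R^d)$ in $\Cc(\R^d)$ so that this identification is unambiguous; and (b) not overclaiming, since without assuming that $\{\widehat{\mu_n}:n\in\N\}$ is vaguely bounded, equivalently vaguely precompact by Proposition~\ref{P2}, one cannot conclude that $(\widehat{\mu_n})_{n\in\N}$ itself converges vaguely to $\nu$ --- only that it has $\nu$ as its unique vague cluster point.
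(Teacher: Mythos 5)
Your argument is correct and follows essentially the same route as the paper: invoke Proposition~\ref{ft td} to get $\widehat{\mu_n}\to\widehat{\mu}$ in the tempered distribution topology, then compare with the vague cluster point $\nu$ along a subsequence in the distribution topology and conclude $\nu=\widehat{\mu}$ as distributions. Your added remarks on the density of $\Cc^\infty(\R^d)$ in $\Cc(\R^d)$ and on not overclaiming full vague convergence of the whole sequence are accurate refinements of the same proof.
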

\begin{proof}
Let $\nu$ be the vague cluster point of $(\widehat{\mu_n})_{n\in\N}$ and let $(n_k)_{k\in\N}$ be the subsequence such that
\begin{displaymath}
\lim_{k\to\infty} \widehat{\mu_{n_k}} = \nu \,.
\end{displaymath}
Then, the set $\{ \widehat{\mu_{n_k}}\, :\, k\in\N \}$ is vaguely bounded.

Since $\mu \in \cM^{\infty}(\R^d)$, it is a tempered distribution. Let $\phi$ be its Fourier transform as a tempered distribution. Then, by Proposition~\ref{ft td}, $\widehat{\mu_n} \to \phi$ in the tempered distribution topology.
It follows that $\left(\widehat{\mu_{n_k}}\right)_{k\in\N}$ converges to both $\nu$ and $\phi$ in the distribution topology, which shows that $\nu= \phi$ as distribution.
\end{proof}

Now, we are ready to state and prove the main result of this section.

\begin{theorem}\label{T3}
Let $(\mu_n)_{n\in\N}$ be a sequence of Fourier transformable measure on $\R^d$, and let $\mu$ be a measure on $\R^d$ such that
\begin{enumerate}
  \item [(i)] $(\mu_n)_{n\in\N}$ converges vaguely to $\mu$,
  \item[(ii)] $\mu$ is tempered,
  \item[(iii)] $\{ \widehat{\mu_n}\, :\, n\in\N\}$ is a set of equi translation bounded measures.
\end{enumerate}
Then, $\mu$ is Fourier transformable and, in the vague topology, we have
\[
\limn  \widehat{\mu_n} = \widehat{\mu} \,.
\]
\end{theorem}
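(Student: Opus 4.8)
The plan is to use the equi translation boundedness of $\{\widehat{\mu_n}\,:\,n\in\N\}$ to force this set to be vaguely precompact, to identify each of its vague cluster points with the Fourier transform of $\mu$ in the sense of tempered distributions, and only at the end to check that this common cluster point is genuinely the measure Fourier transform of $\mu$.

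First I would record the consequences of (iii). Since $\sup_n\|\widehat{\mu_n}\|_K<\infty$ for every compact $K\subseteq\R^d$, each $\widehat{\mu_n}$ is translation bounded, hence a tempered measure. Writing $\cF$ for the Fourier transform on $\cS'(\R^d)$, the defining identity $\mu_n(g)=\langle\widehat{\mu_n},\reallywidecheck{g}\rangle$ for $g\in\Cc^\infty(\R^d)\subseteq KL(\R^d)$, together with $\reallywidecheck{g}\in\cS(\R^d)$, the density of $\Cc^\infty(\R^d)$ in $\cS(\R^d)$, and the fact that $\mu_n$ is a tempered measure by \cite[Thm.~7.2]{ARMA1}, shows that $\widehat{\mu_n}=\cF\mu_n$, that is, $\mu_n=\cF^{-1}\widehat{\mu_n}$ in $\cS'(\R^d)$. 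Moreover, applying Proposition~\ref{P2} to $\{\widehat{\mu_n}\,:\,n\in\N\}$ (whose total variations are uniformly bounded on every compact set, by (iii)) shows that this set is vaguely precompact and that the vague topology on its closure is metrisable.

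Next I would fix an arbitrary vague cluster point $\nu$ of $(\widehat{\mu_n})_{n\in\N}$, say $\widehat{\mu_{n_k}}\to\nu$ vaguely. Passing to the limit in the uniform bounds (lower semicontinuity of the total variation under vague convergence) shows that $\nu$ is translation bounded, hence tempered. Since $\{\widehat{\mu_{n_k}}\,:\,k\in\N\}\cup\{\nu\}$ is equi translation bounded, Proposition~\ref{prop:vtdd} upgrades $\widehat{\mu_{n_k}}\to\nu$ to convergence in the tempered distribution topology. Applying the continuous map $\cF^{-1}$ on $\cS'(\R^d)$ and using the previous paragraph gives $\mu_{n_k}=\cF^{-1}\widehat{\mu_{n_k}}\to\cF^{-1}\nu$ in $\cS'(\R^d)$, hence in the distribution topology; on the other hand $\mu_{n_k}\to\mu$ in the distribution topology by (i) (vague convergence implies convergence in the distribution topology). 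Therefore $\cF^{-1}\nu=\mu$, i.e.\ $\cF\mu=\nu$, where $\cF\mu$ makes sense by (ii). Since $\cF\mu$ does not depend on the chosen subsequence, $\nu=\cF\mu$ is the unique vague cluster point of $(\widehat{\mu_n})_{n\in\N}$, and being vaguely precompact with metrisable closure the sequence converges vaguely to $\nu$.

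It remains to prove that $\mu$ is Fourier transformable with $\widehat{\mu}=\nu$, and this is the delicate part: we know only that $\mu$ is a tempered measure whose distributional Fourier transform is the translation bounded measure $\nu$, and we must recover the identity $\langle\mu,f*\widetilde{f}\rangle=\langle\nu,|\reallywidecheck{f}|^2\rangle$ (with $\reallywidecheck{f}\in L^2(\nu)$) for all $f\in\Cc(\R^d)$. One option is to invoke the known fact that a tempered measure whose distributional Fourier transform is a (translation bounded) measure is Fourier transformable in the measure sense, the two transforms then agreeing (cf.\ \cite{ARMA1,NS12}). Alternatively one verifies the definition directly: $\reallywidecheck{f}\in L^2(\nu)$ for $f\in\Cc(\R^d)$ because $\reallywidecheck{f}$ is of exponential type and $\nu$ is translation bounded (a Plancherel--P\'olya type estimate), and then the required identity follows from $\cF\mu=\nu$ by approximating $f$ in $\Cc(\R^d)$ by functions in $\Cc^\infty(\R^d)$. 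Either way $\widehat{\mu}=\nu$, so that $\limn\widehat{\mu_n}=\nu=\widehat{\mu}$ in the vague topology, as claimed. I expect this last step to be the only real obstacle; everything else is soft, relying on Proposition~\ref{P2}, Proposition~\ref{prop:vtdd}, the continuity of $\cF$ on $\cS'(\R^d)$, and the uniqueness of the distributional Fourier transform.
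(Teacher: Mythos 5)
Your proposal is correct and follows essentially the same route as the paper: use (iii) and Proposition~\ref{P2} to get vague precompactness (and metrisability), identify every vague cluster point of $(\widehat{\mu_n})_{n\in\N}$ with the distributional Fourier transform of $\mu$ via Proposition~\ref{prop:vtdd} and a density argument, and then invoke \cite[Thm.~5.1]{NS12} to conclude that a tempered measure whose distributional Fourier transform is a translation bounded measure is Fourier transformable as a measure. The only cosmetic difference is that you identify each cluster point with $\cF\mu$ directly through the continuity of $\cF^{-1}$ on $\cS'(\R^d)$, which yields uniqueness of the cluster point in one stroke, whereas the paper first proves uniqueness and then identifies the limit; the substance is identical.
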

\begin{proof}
Since $\mu$ is tempered, it is Fourier transformable as a tempered distribution. Let $\varphi$ denote its Fourier transform in the sense of tempered distribution.

Next, by (iii), there exists a compact set $K \subseteq \R^d$ with non-empty interior and a constant $C >0$ such that
\[
\widehat{\mu_n} \in \cM_{C,K}:= \{ \mu \in \cM^{\infty}(\R^d) \ :\ \| \mu \|_K \leqslant C \} \,,
\]
for all $n\in\N$.
We will use below the fact that $\cM_{C,K}$ is vaguely compact and metrisable \cite{BL}. It follows that $(\widehat{\mu_n})_{n\in\N}$ has vague cluster points, which are all translation bounded.
Let us observe first that, if $\nu$ is any vague cluster point of $(\widehat{\mu_n})_{n\in\N}$, then there exists a subsequence $(n_k)_{k\in\N}$ such that $(\widehat{\mu_{n_k}})_{k\in\N}$ converges vaguely to $\nu$. Then, by Proposition~\ref{prop:vtdd}, for all $f \in \Cc^\infty(\R^d)$, we have
\[
\nu(\reallywidecheck{f})=\lim_{k\to\infty} \widehat{\mu_{n_k}}(\reallywidecheck{f}) \,.
\]
Therefore, for all $g \in \Cc^\infty(\R^d) \subseteq KL(\R^d)$, we have
\begin{equation}\label{EQ2}
\nu(\reallywidecheck{g})=\lim_{k\to\infty} \widehat{\mu_{n_k}}(\reallywidecheck{g}) = \lim_{k\to\infty} \mu_{n_k}(g) = \mu(g) \,.
\end{equation}
We split the rest of the proof into steps.

\medskip

\noindent {\it Step 1:} We show that $(\widehat{\mu_n})_{n\in\N}$ is vaguely convergent to some measure $\nu$.

By a standard argument, as $\cM_{C,K}$ is vaguely compact, we only need to show that any two vague cluster points of this sequence are equal. Let $\nu_1, \nu_2$ be two such vague cluster points.
Then, by Eq.~\eqref{EQ2}, we have
\[
\nu_1(\reallywidecheck{g}) = \mu(g)= \nu_2(\reallywidecheck{g})
\]
for all $g \in \Cc^\infty(\R^d) \cap KL(\R^d)$.
Now, since $\nu_1,\nu_2$ are translation bounded, they are tempered as distributions \cite[Sec. 7]{ARMA1}.
Since the set $\{\reallywidecheck{g} \ :\ g \in \Cc^\infty(\R^d) \cap KL(\R^d) \}=\{ \reallywidecheck{g} \ :\ g \in \Cc^\infty(\R^d)\}$ is dense in $\cS(\R^d)$, it follows that $\nu_1=\nu_2$ as tempered distributions.
In particular, for all $h \in \Cc^\infty(\R^d)\subseteq \cS(\R^d)$, we have
\[
\nu_1(h)=\nu_2(h) \,.
\]
Therefore, the measures $\nu_1, \nu_2$ agree on the set $ \Cc^\infty(\R^d)$ which is dense in $\Cc(\R^d)$. Thus, $\nu_1=\nu_2$ as claimed, which shows that  $(\widehat{\mu_n})_{n\in\N}$ is vaguely convergent. Let $\nu$ be the vague limit of this sequence.

\medskip

\noindent {\it Step 2:} We show that $\phi=\nu$.

Let $g \in  \Cc^\infty(\R^d) \subseteq KL(\R^d)$. Then, by Eq.~\eqref{EQ2}, we have
\[
\nu(\reallywidecheck{g}) = \mu(g) \,.
\]
Moreover, as $g \in \Cc^\infty(\R^d) \subseteq \cS(\R^d)$, we also have $\mu(g) = \varphi(\reallywidecheck{g})$. This shows that
\begin{equation*}
\nu(\reallywidecheck{g})= \varphi(\reallywidecheck{g}) \qquad \text{ for all } g \in  \Cc^\infty(\R^d) \cap KL(\R^d) \,.
\end{equation*}
Therefore, since $\nu$ is translation bounded, it is tempered and, by the standard density argument, we get that $\varphi=\nu$ as tempered distributions.

This shows that the Fourier transform $\varphi$ as a tempered distribution is a translation bounded measure. Therefore, by \cite[Thm.~5.1]{NS12}, $\mu$ is Fourier transformable as a measure and
\[
\widehat{\mu}=\varphi=\nu \,.
\]
\end{proof}

\begin{coro}\label{T2}
Let $\mu_n, \mu$ be Fourier transformable measures in $\cM(\R^d)$, for all $n\in\N$, with the following properties:
\begin{enumerate}
  \item [(i)] $(\mu_n)_{n\in\N}$ converges vaguely to $\mu$,
  \item [(ii)] $\{ \widehat{\mu_n}\, :\, n\in\N\}$ is a set of equi translation bounded measures.
\end{enumerate}
Then, in the vague topology, we have
\[
\limn  \widehat{\mu_n} = \widehat{\mu} \,.
\]
\end{coro}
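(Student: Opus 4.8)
The plan is to deduce this directly from Theorem~\ref{T3} by verifying that its three hypotheses hold. Hypothesis (i) of the Corollary is exactly hypothesis (i) of the Theorem, and hypothesis (ii) of the Corollary is exactly hypothesis (iii) of the Theorem. So the only thing that needs checking is hypothesis (ii) of Theorem~\ref{T3}, namely that the vague limit $\mu$ is a tempered measure. But here $\mu$ is assumed to be Fourier transformable, and by the cited result \cite[Thm.~7.2]{ARMA1} every Fourier transformable measure on $\R^d$ is tempered. Hence all three hypotheses of Theorem~\ref{T3} are satisfied.

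Applying Theorem~\ref{T3}, we conclude both that $\mu$ is Fourier transformable (which we already knew) and that $\limn \widehat{\mu_n} = \widehat{\mu}$ in the vague topology, which is the assertion. There is essentially no obstacle here; the content is entirely in Theorem~\ref{T3}, and the Corollary merely records the special case in which one assumes at the outset that $\mu$ is Fourier transformable, so that the temperedness hypothesis becomes automatic. One small point worth making explicit in the write-up is that in Theorem~\ref{T3} the measure $\mu$ need not be assumed Fourier transformable a priori — that is part of the conclusion — whereas in the Corollary it is part of the hypotheses; the two are reconciled precisely through the temperedness of Fourier transformable measures.
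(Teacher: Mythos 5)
Your proposal is correct and follows exactly the paper's own argument: the corollary is Theorem~\ref{T3} applied after observing that the Fourier transformability of $\mu$ guarantees its temperedness via \cite[Thm.~7.2]{ARMA1}. Nothing further is needed.
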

\begin{proof}
This follows from the previous theorem, since every Fourier transformable measure is tempered.
\end{proof}

However, property (ii) is not necessary for the convergence of $(\widehat{\mu_n})_{n\in\N}$ as the next example shows (also compare the next theorem).

\begin{example}
Consider the measures
\[
\mu_n := 1_{[-n,n]}\, \lm, \qquad n\in\N \,.
\]
In this case, $(\mu_n)_{n\in\N}$ converges vaguely to $\mu:=\lm$, and we have
\[
\widehat{\mu_n} =  2n \operatorname{sinc}(2\pi n\bullet) \, \lm \,.
\]
The sequence $(\widehat{\mu_n})_{n\in\N}$ is not equi translation bounded because
\begin{align*}
\|\widehat{\mu_n}\|_{[-1,1]}
    &= 2n \sup_{t\in\R} \int_{[t-1,t+1]} |\operatorname{sinc}(2\pi nx)|\ \dd x \\
    &= \frac{1}{\pi} \sup_{t\in\R} \int_{[2\pi n(t-1),2\pi n(t+1)]}
       |\operatorname{sinc}(y)|\ \dd y  \\
    &\geqslant \frac{1}{\pi} \int_{[-2\pi n,2\pi n]}
       |\operatorname{sinc}(y)|\ \dd y  \xrightarrow{n\to\infty} \infty \,.
\end{align*}
Still, $(\widehat{\mu_n})_{n\in\N}$ converges vaguely to $\widehat{\mu}=\delta_0$.
\end{example}

\begin{theorem} \label{thm:mainvag2}
Let $(\mu_n)_{n\in\N}$ be a sequence of Fourier transformable measures on $\R^d$, and let $\mu$ be a measure on $\R^d$ such that
\begin{enumerate}
\item [(i)] $(\mu_n)_{n\in\N}$ converges vaguely to $\mu$,
\item[(ii)] $\{ \mu_n\, :\, n\in\N\}$ is a set of equi translation bounded measures.
\end{enumerate}
Then, $\mu$ is Fourier transformable, and $(\widehat{\mu_n})_{n\in\N}$ converges vaguely to $\widehat{\mu}$ if and only if the set $\{\widehat{\mu_n}\, : \, n\in\N\}$ is vaguely bounded.
\end{theorem}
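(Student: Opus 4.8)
The plan is to prove the two implications of the final equivalence separately; the forward one is essentially trivial, and all the substance — including the Fourier transformability of $\mu$ — lies in the converse. It is worth noting first that Theorem~\ref{T3} cannot simply be invoked here: its hypothesis that $\{\widehat{\mu_n}\,:\,n\in\N\}$ be \emph{equi translation bounded} is strictly stronger than plain vague boundedness, as the example $\mu_n=1_{[-n,n]}\lm$ (equi translation bounded, with $\mu_n\to\lm$ and $\widehat{\mu_n}\to\delta_0$ vaguely, but $\{\widehat{\mu_n}\}$ not equi translation bounded) shows.

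For the implication ``$(\widehat{\mu_n})_n\to\widehat\mu$ vaguely $\Longrightarrow$ $\{\widehat{\mu_n}\}$ vaguely bounded'' one only observes that a vaguely convergent sequence is vaguely bounded: for each $f\in\Cc(\R^d)$ the numerical sequence $(\widehat{\mu_n}(f))_n$ converges and hence is bounded. This uses neither (i) nor (ii).

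For the converse, assume $\{\widehat{\mu_n}\,:\,n\in\N\}$ is vaguely bounded. First I would record that $\mu$ is translation bounded, hence tempered: since $\{\mu_n\}$ is equi translation bounded it is contained in a set $\MM$ as in Proposition~\ref{P3} all of whose elements are translation bounded (take the covering sets to be the lattice translates of a fixed open cube and the common constant $\sup_n\|\mu_n\|_{[-1,1]^d}$), and $\MM$ is vaguely compact, hence vaguely closed, so $\mu\in\MM$. Consequently $\{\mu_n\}\cup\{\mu\}$ is equi translation bounded, and Proposition~\ref{prop:vtdd} yields that $(\mu_n)_n$ converges to $\mu$ in the tempered distribution topology. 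Writing $\varphi$ for the Fourier transform of $\mu$ in $\cS'(\R^d)$ and using that the Fourier transform is continuous on $\cS'(\R^d)$, together with the fact that each Fourier transformable measure $\mu_n$ is tempered with $\widehat{\mu_n}$ (the measure) equal to its Fourier transform in the sense of tempered distributions, one gets $\widehat{\mu_n}\to\varphi$ in the tempered distribution topology, and a fortiori in the distribution topology. Next I would exploit the vague boundedness: for each compact $K\subseteq\R^d$ there is $c_K>0$ with $|\widehat{\mu_n}|(K)\leqslant c_K$ for all $n$, whence $|\varphi(h)|=\lim_n|\widehat{\mu_n}(h)|\leqslant c_K\,\|h\|_\infty$ for every $h\in\Cc^\infty(\R^d)$ with $\supp(h)\subseteq K$; by the Riesz representation theorem and the density of $\Cc^\infty(\R^d)$ in $\Cc(\R^d)$ in the inductive topology, $\varphi$ is a measure. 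Corollary~\ref{coro:chara_vc}, applied to the vaguely bounded sequence $(\widehat{\mu_n})_n$ converging to $\varphi$ in the distribution topology, then gives $\widehat{\mu_n}\to\varphi$ vaguely. Finally, $\mu$ is a translation bounded measure whose distributional Fourier transform $\varphi$ is a measure, so by \cite[Thm.~5.1]{NS12} the measure $\mu$ is Fourier transformable with $\widehat\mu=\varphi$; hence $(\widehat{\mu_n})_n$ converges vaguely to $\widehat\mu$.

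The step I expect to be most delicate is recognising that the two standing hypotheses play genuinely complementary roles: equi translation boundedness of $(\mu_n)_n$ is what lets Fourier transformation pass through a convergence statement — it provides convergence in $\cS'(\R^d)$, hence in $\cD'(\R^d)$, after transforming — whereas vague boundedness of $\{\widehat{\mu_n}\}$ is precisely the compactness input that both upgrades this $\cD'$-convergence to vague convergence and forces the distributional Fourier transform of $\mu$ to be a measure, which (with the translation boundedness of $\mu$ and \cite[Thm.~5.1]{NS12}) is exactly what makes $\mu$ Fourier transformable. A secondary point is that the argument must proceed through the canonical limit $\varphi$, rather than through an arbitrary vague cluster point of $(\widehat{\mu_n})_n$, in order to obtain convergence of the \emph{full} sequence.
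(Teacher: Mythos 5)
Your proof is correct and follows essentially the same route as the paper's: both pass through tempered-distribution convergence of $(\mu_n)_{n\in\N}$ via Proposition~\ref{prop:vtdd}, transfer it to the Fourier side, and then use the vague boundedness of $\{\widehat{\mu_n}\,:\,n\in\N\}$ together with density of $\Cc^\infty(\R^d)$ to identify the vague limit with the distributional transform $\varphi$. Your write-up is in fact more complete than the paper's: you verify that $\mu$ itself is translation bounded (so that Proposition~\ref{prop:vtdd} genuinely applies to $\{\mu_n\}\cup\{\mu\}$), you establish directly that $\varphi$ is a measure via the uniform local bounds instead of arguing through a cluster point, and you make explicit the concluding appeal to \cite[Thm.~5.1]{NS12} for the Fourier transformability of $\mu$, a step the paper leaves implicit.
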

\begin{proof}
First, assume that $\{\widehat{\mu_n}\, : \, n\in\N\}$ is vaguely bounded. By Proposition~\ref{prop:vtdd}, the sequence $(\mu_n)_{n\in\N}$ also converges to $\mu$ in the tempered distribution topology. This implies that $\widehat{\mu}$ exists as tempered distribution such that $(\widehat{\mu_n})_{n\in\N}$ converges to $\widehat{\mu}$ in the sense of tempered distributions.

Next, $\{\widehat{\mu_n}\, :\, n\in\N\}$ is vaguely compact by Proposition~\ref{P2}. Hence, there is at least one vague cluster point, say $\nu$. So, there exists a subsequence $(n_k)_{k\in\N}$ such that
\[
\lim_{k\to\infty} \widehat{\mu_{n_k}}(f) = \nu(f), \qquad \text{ for all } f\in\Cc(\R^d) \,.
\]
This and the fact that
\[
\lim_{n\to\infty} \widehat{\mu_n}(\phi) = \widehat{\mu}(\phi),  \qquad \text{ for all } f\in\cS(\R^d)  \,,
\]
imply that $\nu$ and $\widehat{\mu}$ coincide on $\Cc^{\infty}(\R^d)$, which is a dense subset of $\Cc(\R^d)$. This finishes the proof.

\medskip

On the other hand, assume that $\mu$ is Fourier transformable and that $(\widehat{\mu_n})_{n\in\N}$ converges vaguely to $\widehat{\mu}$. This trivially implies that $\{\widehat{\mu_n}\, : \, n\in\N\}$ is vaguely bounded.
\end{proof}

So far, we had always assumed that $(\mu_n)_{n\in\N}$ converges vaguely to $\mu$. But this is not necessary for $(\widehat{\mu_n})_{n\in\N}$ to converge to $\widehat{\mu}$, as the next example shows.

\begin{example}\label{ex417}
Consider the sequence of measures $(\mu_n)_{n\in\N}$ from Example~\ref{ex:tempdis}, i.e.
\begin{displaymath}
\mu_n:= n \delta_{\frac{1}{n}}+n \delta_{-\frac{1}{n}} -2n \delta_0,\qquad n\in\N \,.
\end{displaymath}
We saw that it doesn't converge vaguely to $0$. However, notice that it converges to $0$ in the tempered distribution topology. Also, we have
\[
\widehat{\mu_n} = n \e^{-2\pi\im \frac{1}{n}\bullet} \,\lm + n \e^{2\pi\im \frac{1}{n}\bullet} \,\lm -2n \lm,\qquad n\in\N \,.
\]
Therefore, $(\widehat{\mu_n})_{n\in\N}$ converges vaguely to $0$.
\end{example}

\section{Continuity in the product topology} \label{sec:product}

Next, we want to investigate under which circumstances we can obtain a stroger kind of convergence of $(\widehat{\mu_{\alpha}})_{\alpha}$. Note that the assumptions from Theorem~\ref{T3} are not sufficient to guarantee convergence in the norm topology or in the product topology, as the next example shows.

\begin{example}
Consider the sequence of measures $(\mu_n)_{n\in\N}$ with
\[
\mu_n := \e^{-2\pi\im n\bullet}\, \lm \,.
\]
The Riemann--Lebesgue lemma implies that $(\mu_n)_{n\in\N}$ converges vaguely to $\mu=0$. Obviously, $\mu$ is tempered/Fourier transformable, and $\widehat{\mu} = 0$. Moreover, we have $\widehat{\mu_n} = \delta_{-n}$. So, if $K$ is any compact set in $\R$, we obtain
\[
\| \widehat{\mu_n}\|_K = \sup_{t\in\R} |\widehat{\mu_n}|(t+K) = \sup_{t\in\R} \delta_{-n}(t+K) = 1 \,.
\]
Consequently, the sequence $(\mu_n)_{n\in\N}$ satisfies the assumptions from Theorem~\ref{T3}, and $(\widehat{\mu_n})_{n\in\N}$ converges vaguely to $0$.

However, $(\widehat{\mu_n})_{n\in\N}$ does not converge in the product topology (hence, not in the norm topology, either) because
\[
\| g*\widehat{\mu_n} - g*\widehat{\mu} \|_{\infty}
    = \| g*\delta_{-n}\|_{\infty} = \|g\|_{\infty}
\]
for all $g\in\Cc(\R)$.
\end{example}

If we replace (i) in Therorem~\ref{T3} by a stronger property, the sequence $(\widehat{\mu_n})_{n\in\N}$ converges to $\widehat{\mu}$ in the product topology.

\begin{theorem} \label{thm:prodtop}
Let $(\mu_n)_{n\in\N}$ be a sequence of Fourier transformable measures on $\R^d$, and let $\mu$ be a measure on $\R^d$ such that
\begin{enumerate}
  \item [(i)] $\left(\mu_n(\e^{-2\pi\im t\bullet}f)\right)_{n\in\N}$ converges to $\mu(\e^{-2\pi\im t\bullet}f)$ uniformly in $t$, for all $f\in\cS(\R^d)$,
  \item[(ii)] $\{\mu_n\, :\, n\in\N\}$ is vaguely bounded,
  \item[(iii)] $\mu$ is tempered,
  \item[(iv)] $\{ \widehat{\mu_n}\, :\, n\in\N\}$ is a set of equi translation bounded measures.
\end{enumerate}
Then, $\mu$ is Fourier transformable and, in the product topology, we have
\[
\limn  \widehat{\mu_n} = \widehat{\mu} \,.
\]
\end{theorem}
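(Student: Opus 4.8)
The plan is to first recover the hypotheses of Theorem~\ref{T3} (getting the transformability of $\mu$ and vague convergence of the Fourier transforms for free), and then to upgrade this to the product topology by means of an explicit formula expressing $g*\widehat{\mu_n}$ in terms of $\mu_n$.

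First, specialising (i) to $t=0$ gives $\mu_n(f)\to\mu(f)$ for every $f\in\cS(\R^d)$, in particular for every $f\in\Cc^\infty(\R^d)$, so $(\mu_n)_{n\in\N}$ converges to $\mu$ in the distribution topology; together with (ii) and Corollary~\ref{coro:chara_vc} this yields that $(\mu_n)_{n\in\N}$ converges vaguely to $\mu$. Now (i) (vague convergence), (iii) and (iv) are exactly the hypotheses of Theorem~\ref{T3}, so $\mu$ is Fourier transformable and $(\widehat{\mu_n})_{n\in\N}$ converges vaguely to $\widehat\mu$. Moreover, inspecting the proof of Theorem~\ref{T3}, $\widehat\mu$ lies in the same set $\cM_{C,K}$ that contains all the $\widehat{\mu_n}$, so $\{\widehat{\mu_n}\,:\,n\in\N\}\cup\{\widehat\mu\}$ is equi translation bounded; in particular each $\widehat{\mu_n}$ and $\widehat\mu$ is a tempered measure, and $g*\widehat{\mu_n}$, $g*\widehat\mu$ are genuine bounded continuous functions, so the supremum norms in the definition of the product topology make sense.

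The core of the argument is the identity
\begin{equation}\label{eq:ssconvform}
(g*\widehat{\nu})(x)=\nu\big(\e^{-2\pi\im x\bullet}\,\reallywidecheck{g}\,\big)\,,\qquad x\in\R^d\,,
\end{equation}
valid for every Fourier transformable measure $\nu$ with $\widehat\nu$ translation bounded and every $g\in\cS(\R^d)$. To prove it I would first extend the Fourier reciprocity formula $\nu(\reallywidehat{h})=\widehat\nu(h)$, which holds whenever $\reallywidehat{h}\in KL(\R^d)$, to all $h\in\cS(\R^d)$: taking a standard sequence of cut-offs $\chi_k\in\Cc^\infty(\R^d)$ with $\chi_k\to 1$ and setting $h_k:=\reallywidecheck{(\chi_k\,\reallywidehat{h})}$, one has $\reallywidehat{h_k}=\chi_k\,\reallywidehat{h}\in\Cc^\infty(\R^d)\subseteq KL(\R^d)$, while $h_k\to h$ and $\reallywidehat{h_k}\to\reallywidehat{h}$ in $\cS(\R^d)$; since $\nu$ and $\widehat\nu$ are tempered measures, letting $k\to\infty$ in $\nu(\reallywidehat{h_k})=\widehat\nu(h_k)$ gives $\nu(\reallywidehat{h})=\widehat\nu(h)$. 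Applying this with $h=T_xg^{\dagger}\in\cS(\R^d)$, and using $\reallywidehat{T_xg^{\dagger}}=\e^{-2\pi\im x\bullet}\reallywidecheck{g}$ together with $(g*\widehat\nu)(x)=\widehat\nu(T_xg^{\dagger})$, yields \eqref{eq:ssconvform}. With \eqref{eq:ssconvform} in hand, for $g\in\cS(\R^d)$ we get
\[
\|g*\widehat{\mu_n}-g*\widehat\mu\|_\infty=\sup_{x\in\R^d}\big|\mu_n\big(\e^{-2\pi\im x\bullet}\reallywidecheck{g}\big)-\mu\big(\e^{-2\pi\im x\bullet}\reallywidecheck{g}\big)\big|\,,
\]
which tends to $0$ by hypothesis (i) applied to $f=\reallywidecheck{g}\in\cS(\R^d)$. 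To pass to an arbitrary $g\in\Cc(\R^d)$ I would run a three-$\eps$ argument: using equi translation boundedness, fix a symmetric compact set $W$ containing $\supp g$ and a finite $C'$ with $\|\widehat{\mu_n}\|_W,\|\widehat\mu\|_W\leqslant C'$; choose $\psi\in\Cc^\infty(\R^d)$ with $\supp\psi\subseteq W$ and $\|g-\psi\|_\infty$ small, so that the elementary bound $\|(g-\psi)*\nu\|_\infty\leqslant\|g-\psi\|_\infty\,\|\nu\|_W$ controls $\|(g-\psi)*\widehat{\mu_n}\|_\infty$ and $\|(g-\psi)*\widehat\mu\|_\infty$ uniformly in $n$ by $C'\|g-\psi\|_\infty$, while $\|\psi*\widehat{\mu_n}-\psi*\widehat\mu\|_\infty\to 0$ by the Schwartz case. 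Hence $\|(\widehat{\mu_n}-\widehat\mu)*g\|_\infty\to 0$ for all $g\in\Cc(\R^d)$, i.e. $(\widehat{\mu_n})_{n\in\N}$ converges to $\widehat\mu$ in the product topology.

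The main obstacle is the identity \eqref{eq:ssconvform}, equivalently the extension of Fourier reciprocity past $KL(\R^d)$ to $\cS(\R^d)$: the natural candidate $\reallywidehat{T_xg^{\dagger}}=\e^{-2\pi\im x\bullet}\reallywidecheck{g}$ is never compactly supported, so the $KL$‑formula does not apply directly, and one genuinely needs the cut‑off/limiting argument, whose convergence rests on $\nu$ \emph{and} $\widehat\nu$ being tempered measures. This is precisely the point where translation boundedness of the $\widehat{\mu_n}$ (hypothesis (iv)) is indispensable; the remaining steps are the soft reduction to Theorem~\ref{T3} and a routine density estimate.
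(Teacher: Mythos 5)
Your proof is correct and follows essentially the same route as the paper: reduce to Theorem~\ref{T3} to obtain transformability of $\mu$ and vague convergence of the transforms, then combine hypothesis (i) with the identity $(\phi*\widehat{\nu})(t)=\nu\big(\e^{-2\pi\im t\bullet}\,\reallywidecheck{\phi}\,\big)$ and a density argument in $\Cc(\R^d)$. You simply supply more detail than the paper for the two steps it leaves implicit, namely the extension of Fourier reciprocity from $KL(\R^d)$ to Schwartz test functions and the three-$\eps$ passage from $\Cc^\infty(\R^d)$ to $\Cc(\R^d)$ via equi translation boundedness.
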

\begin{proof}
First note that (i) and (ii) imply that $(\mu_n)_{n\in\N}$ converges vaguely to $\mu$, see Theorem~\ref{thm:crit_td}. Thus, by Theorem~\ref{T3}, $\mu$ is Fourier transformable and
\[
\widehat{\mu_n} \to \widehat{\mu} \qquad \text{vaguely}\,.
\]

Next, by (i), one has
\begin{align*}
\| \phi*\widehat{\mu_n} - \phi*\widehat{\mu} \|_{\infty}
    &= \sup_{t\in \R^d} \left| \int_{\R^d} (T_t\phi^{\dagger})(x) \
       \dd(\widehat{\mu_n -\mu})(x) \right|  \\
    &= \sup_{t\in \R^d} \left| \int_{\R^d} \widehat{(T_t\phi^{\dagger})}(x)
       \ \dd(\mu_n -\mu)(x) \right|   \\
    &= \sup_{t\in \R^d} \left| \int_{\R^d} \e^{-2\pi\im tx}\,
       \reallywidecheck{\phi}(x) \ \dd(\mu_n -\mu)(x) \right|  \\
    &\xrightarrow{\text{(i)}} 0
\end{align*}
for every $\phi\in\Cc^{\infty}(\R^d)$. Since $\reallywidecheck{\phi}\in\cS(\R^d)$ and $\Cc^{\infty}(\R^d)$ is dense in $\Cc(\R^d)$, the claim follows.
\end{proof}

\begin{lemma} \label{lem:A}
Let (I) denote the property from Theorem~\ref{thm:prodtop}(i).
\begin{enumerate}
\item[(a)] (I) implies that $(\mu_n)_{n\in\N}$ converges vaguely to $\mu$.
\item[(b)] (I) does not imply that $(\mu_n)_{n\in\N}$ converges in the product topology to $\mu$.
\item[(c)] If $(\mu_n)_{n\in\N}$ converges in the product topology to $\mu$, then  (I) does not hold in general.
\item[(d)] If $(\mu_n)_{n\in\N}$ satisfies
\[
\limn \int_{\R^d} |f(x)|\ \dd |\mu_n-\mu|(x) = 0 \qquad \text{ for all } f\in\cS(\R^d)
\]
for some measure $\mu$, then it also satisfies (I).
\item[(e)] If $(\mu_n)_{n\in\N}$ converges in the norm topology to $\mu$, then it satisfies the property from (d), hence (I).
\end{enumerate}
\end{lemma}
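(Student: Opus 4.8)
The plan is to treat the five items more or less separately, reusing a small number of explicit sequences and elementary estimates.

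For (a) I would first put $t=0$ in (I): this gives $\lim_n\mu_n(\varphi)=\mu(\varphi)$ for every $\varphi\in\cS(\R^d)$, hence, since $\Cc^\infty(\R^d)\subseteq\cS(\R^d)$, convergence of $(\mu_n)_{n\in\N}$ to $\mu$ in the distribution topology. By Corollary~\ref{coro:chara_vc} it then suffices to prove that $\{\mu_n\,:\,n\in\N\}$ is vaguely bounded, after which the corollary yields vague convergence. This is the crux of the lemma, and also exactly where the uniformity in $t$ is needed: without it one obtains only $\cS'$-convergence, which by Example~\ref{ex:tempdis} need not be vague convergence. To get vague boundedness I would exploit that, by (I), for each $f\in\cS(\R^d)$ the quantities $\sup_t|\mu_n(\e^{-2\pi\im t\bullet}f)|$ are bounded uniformly in $n$, and try to convert this — via Fourier inversion, which writes suitable continuous test functions as superpositions $\int\widehat g(\xi)\,\e^{2\pi\im\xi\bullet}f\,\dd\xi$ of modulations of a fixed bump $f$ — into the bound $\sup_n|\mu_n|(K)<\infty$ for every compact $K$, using the characterisations in Proposition~\ref{P2}. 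The main obstacle is this conversion, in particular handling test functions that are merely continuous rather than smooth; I expect it to need a careful approximation/mollification step.

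For (b) I would take $\mu_n:=\delta_n$ and $\mu:=0$: since Schwartz functions decay rapidly, $\sup_t|\mu_n(\e^{-2\pi\im t\bullet}f)|=|f(n)|\to 0$ for every $f\in\cS(\R^d)$, so (I) holds, whereas $\|\mu_n*g\|_\infty=\sup_t|g(t-n)|=\|g\|_\infty$ for every $g\in\Cc(\R^d)$, so $(\mu_n)_{n\in\N}$ does not converge to $\mu=0$ in the product topology. For (c) I would take $\mu_n:=\e^{-2\pi\im n\bullet}\lm$ and $\mu:=0$: the Riemann--Lebesgue lemma gives $\|\mu_n*g\|_\infty=|\widehat g(-n)|\to 0$ for all $g\in\Cc(\R^d)$, so $(\mu_n)_{n\in\N}\to\mu$ in the product topology, while $\mu_n(\e^{-2\pi\im t\bullet}f)=\widehat f(t+n)$ gives $\sup_t|\mu_n(\e^{-2\pi\im t\bullet}f)|=\|\widehat f\|_\infty\not\to 0$ whenever $f\ne 0$, so (I) fails.

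For (d) the estimate is immediate:
\[
\sup_t\bigl|(\mu_n-\mu)(\e^{-2\pi\im t\bullet}f)\bigr|
   =\sup_t\Bigl|\int_{\R^d}\e^{-2\pi\im tx}f(x)\,\dd(\mu_n-\mu)(x)\Bigr|
   \leqslant\int_{\R^d}|f(x)|\,\dd|\mu_n-\mu|(x)\xrightarrow{n\to\infty} 0\,.
\]
For (e), norm convergence makes $\mu_n-\mu$ translation bounded with $\|\mu_n-\mu\|_{[-\frac12,\frac12]^d}\to 0$ (the norm being, up to equivalence, independent of the compact set chosen), so the estimate in the proof of Lemma~\ref{L1}, applied to $\mu_n-\mu$ and to $f\in\cS(\R^d)\subseteq\Cz(\R^d)$, gives $\int_{\R^d}|f|\,\dd|\mu_n-\mu|\leqslant C\,\|(1+|\bullet|^2)^d f\|_\infty\,\|\mu_n-\mu\|_{[-\frac12,\frac12]^d}\to 0$; hence $(\mu_n)_{n\in\N}$ satisfies the hypothesis of (d), and therefore (I). The hard part is the vague-boundedness step in (a); items (b)--(e) are routine once the examples above and the Lemma~\ref{L1} estimate are in hand.
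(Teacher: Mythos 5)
Parts (b), (d) and (e) of your proposal are correct and essentially the paper's own arguments: the paper uses $\mu_n=\delta_{-n}$ rather than $\delta_n$ in (b), and in (e) it rewrites the Lemma~\ref{L1} computation over the cubes $[0,1]^d+k$ rather than citing the lemma, but these are cosmetic differences. Your example for (c), $\mu_n=\e^{-2\pi\im n\bullet}\lm\to 0$ in the product topology with $\sup_t|\mu_n(\e^{-2\pi\im t\bullet}f)|=\|\widehat f\|_\infty\not\to 0$, is different from the paper's ($\mu_n=\delta_{1/n}\to\delta_0$, where $\sup_t|\e^{-2\pi\im t/n}f(1/n)-f(0)|\to 2|f(0)|$), but it is equally valid and arguably cleaner.

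The genuine issue is (a), and you have correctly located it. The paper's entire proof of (a) is the sentence ``this is included in the proof of Theorem~\ref{thm:prodtop}'', and in that proof the vague convergence of $(\mu_n)_{n\in\N}$ is deduced from (I) \emph{together with} hypothesis (ii) of that theorem, namely vague boundedness of $\{\mu_n\,:\,n\in\N\}$: setting $t=0$ in (I) gives convergence on $\cS(\R^d)\supseteq\Cc^\infty(\R^d)$, and Corollary~\ref{coro:chara_vc} then upgrades this to vague convergence precisely because vague boundedness is assumed. The paper never derives vague boundedness from (I), and the conversion you propose --- recovering $\sup_n|\mu_n|(K)$ from the bounds $\sup_{n,t}|\mu_n(\e^{-2\pi\im t\bullet}f)|<\infty$ by Fourier inversion --- cannot succeed, because $\sup_t|\mu_n(\e^{-2\pi\im t\bullet}f)|=\|\widehat{f\mu_n}\|_\infty$ does not control the total variation of $f\mu_n$. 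Indeed, flat trigonometric polynomials of Rudin--Shapiro or random-sign type yield densities $\nu_n$ supported in $[0,1]$ with $\|\widehat{\nu_n}\|_\infty\to 0$ while $|\nu_n|([0,1])\to\infty$; since $\|\widehat{f\nu_n}\|_\infty\leqslant\|\widehat f\|_{L^1}\|\widehat{\nu_n}\|_\infty$ for $f\in\cS(\R^d)$, such a sequence satisfies (I) with $\mu=0$ but is not vaguely bounded, hence (by Proposition~\ref{P2}) not vaguely convergent. So statement (a) read literally, with (I) as the sole hypothesis, is not what the paper's proof establishes and in fact fails; it becomes true, by the two-line argument you already gave, once the standing hypothesis that $\{\mu_n\,:\,n\in\N\}$ is vaguely bounded is carried along, which is exactly how it is used inside Theorem~\ref{thm:prodtop}. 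You should prove (a) in that form and drop the mollification step, which is both unnecessary for the paper's purposes and unattainable.
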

\begin{proof}
(a) This is included in the proof of Theorem~\ref{thm:prodtop}.

\medskip

\noindent (b) Consider the sequence $(\mu_n)_{n\in\N}$ with $\mu_n=\delta_{-n}$, for all $n\in\N$. This sequence satisfies (I) with $\mu=0$ because
\[
\lim_{n\to\infty} \sup_{t\in\R} \left| \int_{\R} \e^{-2\pi\im tx} f(x)\ \dd(\mu_n-\mu)(x) \right| = \limn \sup_{t\in\R} |\e^{2\pi\im tn}\, f(-n)| = \limn |f(-n)| = 0
\]
for all $f\in\cS(\R)$. However, $(\mu_n)_{n\in\N}$ does not converge to $\mu$ in the product topology, since
\[
\|\phi*\mu_n-\phi*\mu\|_{\infty} = \|\phi*\delta_{-n}\|_{\infty} = \|\phi\|_{\infty},
\]
for all $n\in\N$ and $\phi\in\Cc(\R)$.

\medskip

\noindent (c)  Consider the sequence $(\mu_n)_{n\in\N}$ with $\mu_n=\delta_{\frac{1}{n}}$, for all $n\in\N$. Then, $(\mu_n)_{n\in\N}$ converges to $\mu=\delta_0$ in the product topology because
\[
\limn \|\phi*\mu_n-\phi*\mu\|_{\infty} = \limn \|T_{\frac{1}{n}}\phi -\phi\|_{\infty} =0
\]
for all $\phi\in\Cc(\R)\subseteq \Cu(\R)$. On the other hand, (I) is not satisfied, since
\[
\sup_{t\in\R} \left|\int_{\R} \e^{-2\pi\im tx} f(x)\ \dd(\mu_n-\mu)(x) \right| =  \sup_{t\in\R} |\e^{2\pi\im tn^{-1}} f(n^{-1})-f(0)| \xrightarrow{n\to\infty} 2\, |f(0)|
\]
for all $\phi\in\cS(\R)$.

\medskip

\noindent (d) This follows from
\[
\sup_{t\in\R^d} \left| \int_{\R^d} \e^{-2\pi\im tx} f(x)\ \dd(\mu_n-\mu)(x) \right| \leqslant  \int_{\R^d} |f(x)| \ \dd|\mu_n-\mu|(x)
\]
for all $f\in\cS(\R^d)$.

\medskip

\noindent (e) Set $s:=\sum_{k\in\Z^d} \sup_{x\in [0,1]^d+k} \frac{1}{(1+|x|^2)^d} < \infty$. Let $f\in\cS(\R^d)$. Then, there is a constant $c>0$ such that
\begin{align*}
\int_{\R^d} |f(x)|\ \dd|\mu_n-\mu|(x)
    &= \sum_{k\in\Z^d} \int_{[0,1]^d+k} |f(x)|\ \dd|\mu_n-\mu|(x)  \\
    &\leqslant c \sum_{k\in\Z^d} \int_{[0,1]^d+k} \frac{1}{(1+|x|^2)^d} \
      \dd|\mu_n-\mu|(x)  \\
    &\leqslant c\, \|\mu_n-\mu\|_{[0,1]^d} \sum_{k\in\Z^d} \sup_{x\in [0,1]^d+k}
       \frac{1}{(1+|x|^2)^d}  \\
    &= c\,s \, \|\mu_n-\mu\|_{[0,1]^d} \,,
\end{align*}
which implies the claim.
\end{proof}

\begin{coro}
Let $(\mu_n)_{n\in\N}$ be a sequence of Fourier transformable measures on $\R^d$, and let $\mu$ a measure on $\R^d$ such that
\begin{enumerate}
  \item [(i)] $\mu_n \to \mu$ in the norm topology,
  \item[(ii)] $\mu$ is tempered,
  \item[(iii)] $\{ \widehat{\mu_n}\, :\, n\in\N\}$ is a set of equi translation bounded measures.
\end{enumerate}
Then, $\mu$ is Fourier transformable and, in the product topology, we have
\[
\limn  \widehat{\mu_n} = \widehat{\mu} \,.
\]
\end{coro}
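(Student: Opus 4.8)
The plan is to obtain the corollary directly from Theorem~\ref{thm:prodtop} and Lemma~\ref{lem:A}, by checking that the norm hypothesis supplies exactly the ingredient---condition~(i) of Theorem~\ref{thm:prodtop}---that is not already among the hypotheses here. Indeed, hypotheses~(ii) and~(iii) of the corollary are verbatim conditions~(iii) and~(iv) of Theorem~\ref{thm:prodtop}, so once matters are reduced to that theorem there is essentially nothing left to do.

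First I would apply Lemma~\ref{lem:A}(e): since $(\mu_n)_{n\in\N}$ converges to $\mu$ in the norm topology, it satisfies the $L^1$-type convergence of Lemma~\ref{lem:A}(d), and hence property~(I), which is precisely condition~(i) of Theorem~\ref{thm:prodtop}. Next I would establish condition~(ii) of that theorem, the vague boundedness of $\{\mu_n : n\in\N\}$. The quickest route is Lemma~\ref{lem:A}(a), by which property~(I) already forces $(\mu_n)_{n\in\N}$ to converge vaguely to $\mu$, so that $\{\mu_n(f) : n\in\N\}$ is bounded for each $f\in\Cc(\R^d)$. Alternatively, one can argue by hand: for $f\in\Cc(\R^d)$ one has $|\mu_n(f)-\mu(f)|\le \|f\|_\infty\,|\mu_n-\mu|(\supp f)$, and since the compact set $K$ witnessing norm convergence is the closure of its interior, $\supp f$ is covered by finitely many translates of $K^\circ$, whence $|\mu_n-\mu|(\supp f)\le N\,\|\mu_n-\mu\|_K\to 0$; boundedness of $\{\mu_n(f)\}$ then follows because $\mu(f)$ is finite.

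At this point conditions~(i)--(iv) of Theorem~\ref{thm:prodtop} all hold, so the theorem yields that $\mu$ is Fourier transformable and that $\widehat{\mu_n}\to\widehat{\mu}$ in the product topology, which is the claim. I do not expect a genuine obstacle here: the analytic content is entirely packaged inside Theorem~\ref{thm:prodtop} and Lemma~\ref{lem:A}(d),(e), and the only mild point of care is the vague boundedness of $\{\mu_n\}$, which is dispatched by either of the two observations above.
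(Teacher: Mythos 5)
Your proposal is correct and follows the same route as the paper, which simply cites Theorem~\ref{thm:prodtop} together with Lemma~\ref{lem:A}(e); your additional verification of the vague boundedness of $\{\mu_n : n\in\N\}$ (via Lemma~\ref{lem:A}(a) or the covering argument) is a detail the paper leaves implicit but is correctly handled here.
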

\begin{proof}
This is an immediate consequence of Theorem~\ref{thm:prodtop} and Lemma~\ref{lem:A}(e).
\end{proof}

If we make use of Theorem~\ref{thm:mainvag2} instead of Theorem~\ref{T3}, we obtain a different criterion for the convergence in the product topology.

\begin{theorem}
Let $(\mu_n)_{n\in\N}$ be a sequence of Fourier transformable measures on $\R^d$, and let $\mu$ be a measure on $\R^d$ such that
\begin{enumerate}
  \item [(i)] $\left(\mu_n(\e^{-2\pi\im t\bullet}f)\right)_{n\in\N}$ converges to $\mu(\e^{-2\pi\im t\bullet}f)$ uniformly in $t$, for all $f\in\cS(\R^d)$,
  \item[(ii)] the set $\{\mu_n\, :\, n\in\N\}$ is a set of equi translation bounded measures,
  \item[(iii)] the set $\{ \widehat{\mu_n}\, :\, n\in\N\}$ is vaguely bounded.
\end{enumerate}
Then, $\mu$ is Fourier transformable and, in the product topology, we have
\[
\limn  \widehat{\mu_n} = \widehat{\mu} \,.
\]
\end{theorem}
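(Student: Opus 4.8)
The plan is to run the proof of Theorem~\ref{thm:prodtop} almost verbatim, but with Theorem~\ref{thm:mainvag2} taking over the role played there by Theorem~\ref{T3}. So the argument divides into a soft reduction to vague convergence, followed by the convolution estimate already used for Theorem~\ref{thm:prodtop}, followed by a density step from $\Cc^\infty(\R^d)$ to $\Cc(\R^d)$ which I expect to be the only delicate point.

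First I would observe that hypothesis~(i) is exactly property~(I) of Lemma~\ref{lem:A}, so Lemma~\ref{lem:A}(a) gives that $(\mu_n)_{n\in\N}$ converges vaguely to $\mu$; since the $\mu_n$ are equi translation bounded by~(ii), the limit $\mu$ is translation bounded, hence tempered. Theorem~\ref{thm:mainvag2} now applies to the equi translation bounded, vaguely convergent sequence $(\mu_n)_{n\in\N}$: the limit $\mu$ is Fourier transformable, and $(\widehat{\mu_n})_{n\in\N}$ converges vaguely to $\widehat{\mu}$ precisely because $\{\widehat{\mu_n}\,:\,n\in\N\}$ is vaguely bounded, which is hypothesis~(iii). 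Thus $\mu$ is Fourier transformable and $\widehat{\mu_n}\to\widehat{\mu}$ vaguely, and it remains only to upgrade this to convergence in the product topology.

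For the upgrade I would copy the computation from the proof of Theorem~\ref{thm:prodtop}: for $\phi\in\Cc^{\infty}(\R^d)$, using that $\mu_n$ and $\mu$ are tempered measures and that $\widehat{T_t\phi^{\dagger}}=\e^{-2\pi\im t\bullet}\,\reallywidecheck{\phi}$ with $\reallywidecheck{\phi}\in\cS(\R^d)$,
\[
\|\phi*\widehat{\mu_n}-\phi*\widehat{\mu}\|_{\infty}=\sup_{t\in\R^d}\Bigl|\int_{\R^d}\e^{-2\pi\im tx}\,\reallywidecheck{\phi}(x)\ \dd(\mu_n-\mu)(x)\Bigr| ,
\]
and the right-hand side tends to $0$ by hypothesis~(i). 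Hence $\widehat{\mu_n}\to\widehat{\mu}$ in the product topology when tested against any smooth compactly supported function.

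The step I expect to be the genuine obstacle is passing from $\Cc^{\infty}(\R^d)$ to all of $\Cc(\R^d)$. For $g\in\Cc(\R^d)$, approximating it uniformly by $\phi\in\Cc^{\infty}(\R^d)$ supported in a fixed compact $K\supseteq\supp g$ gives
\[
\|g*(\widehat{\mu_n}-\widehat{\mu})\|_{\infty}\ \leqslant\ \|g-\phi\|_{\infty}\Bigl(\sup_{n\in\N}\|\widehat{\mu_n}\|_{K}+\|\widehat{\mu}\|_{K}\Bigr)+\|\phi*(\widehat{\mu_n}-\widehat{\mu})\|_{\infty},
\]
and since the last term already tends to $0$, one is reduced to the uniform bound $\sup_{n\in\N}\|\widehat{\mu_n}\|_{K}<\infty$ --- i.e.\ to the equi translation boundedness of $\{\widehat{\mu_n}\,:\,n\in\N\}$, which then also makes $\widehat{\mu}$ (a vague limit of the $\widehat{\mu_n}$) translation bounded and lets the proof finish as in Theorem~\ref{thm:prodtop}. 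The delicacy is that hypothesis~(iii) only guarantees vague boundedness of the $\widehat{\mu_n}$, not equi translation boundedness, so this uniform bound really has to be produced from the combination of (i)--(iii). My plan here is to exploit the identity $\|\varphi*\widehat{\mu_n}\|_{\infty}=\sup_{t\in\R^d}|\mu_n(\e^{-2\pi\im t\bullet}\reallywidecheck{\varphi})|$: by Lemma~\ref{L1} and~(ii) the right-hand side is bounded uniformly in $t$ and $n$ for each $\varphi\in\cS(\R^d)$ (since $\reallywidecheck{\varphi}\in\cS(\R^d)$), and a suitable convolution characterisation of equi translation boundedness should then upgrade this to $\sup_{n\in\N}\|\widehat{\mu_n}\|_{K}<\infty$; once that uniform bound is in hand the statement is in fact immediate from Theorem~\ref{thm:prodtop}, hypotheses~(i)--(iii) together with it containing all of (i)--(iv) there, and the content of the present theorem is precisely that the weaker vague boundedness~(iii) may replace the equi translation boundedness of the $\widehat{\mu_n}$.
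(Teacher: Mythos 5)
Your reduction to vague convergence is exactly the intended one: the paper states this theorem without a separate proof, presenting it as the combination of Lemma~\ref{lem:A}(a) (hypothesis~(i) gives $\mu_n\to\mu$ vaguely), Theorem~\ref{thm:mainvag2} (with (ii) and (iii) this yields Fourier transformability of $\mu$ and $\widehat{\mu_n}\to\widehat{\mu}$ vaguely), and the convolution computation from the proof of Theorem~\ref{thm:prodtop}, which under~(i) gives $\|\phi*(\widehat{\mu_n}-\widehat{\mu})\|_\infty\to0$ for every $\phi\in\Cc^{\infty}(\R^d)$. Up to that point your argument is correct.

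The step you flag as delicate is, however, a genuine gap, and your proposed repair cannot close it. Lemma~\ref{L1} together with~(ii) does give $\sup_{n,t}|\mu_n(\e^{-2\pi\im t\bullet}\reallywidecheck{\varphi})|<\infty$ for each \emph{fixed} $\varphi\in\cS(\R^d)$, i.e.\ $\sup_n\|\varphi*\widehat{\mu_n}\|_\infty<\infty$ for each fixed $\varphi$; but for signed or complex measures such a familywise bound does not upgrade to $\sup_n\|\widehat{\mu_n}\|_K<\infty$: the convolution characterisation of translation boundedness is not quantitative in the way you need, since $|\varphi*\nu|$ can stay small while $\varphi*|\nu|$ is huge. (The route via \cite[Cor.~3.2]{SpSt}, $\|\widehat{\mu_n}\|_U=\sup_{t,f}|\mu_n(\e^{-2\pi\im t\bullet}\widehat{f})|$ with $f$ ranging over $\mathcal{F}_U^{\infty}(\R^d)$, fails for the same reason: $\|(1+|\bullet|^2)^d\,\widehat{f}\|_\infty$ is not bounded over that family, so Lemma~\ref{L1} gives no uniform control.) In fact no derivation can succeed: the paper's own example $\mu_n=1_{[-n,n]}\lm$ satisfies (i), (ii) and (iii) (the transforms converge vaguely to $\delta_0$, hence are vaguely bounded), yet $\{\widehat{\mu_n}\,:\,n\in\N\}$ is \emph{not} equi translation bounded, as computed there. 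Worse, for that example $g*\widehat{\mu_n}$ is the truncated Fourier inversion integral $\int_{-n}^{n}\widehat{g}(\xi)\,\e^{2\pi\im\xi t}\ \dd\xi$, which for a merely continuous compactly supported $g$ need not converge to $g$ uniformly (nor even pointwise, by a du Bois-Reymond type construction), so the asserted product-topology convergence itself is problematic on $\Cc(\R^d)\setminus\Cc^{\infty}(\R^d)$. What (i)--(iii) actually yield is $\limn\|\phi*(\widehat{\mu_n}-\widehat{\mu})\|_\infty=0$ for all $\phi\in\Cc^{\infty}(\R^d)$; to pass to all of $\Cc(\R^d)$ one really needs the equi translation boundedness hypothesis~(iv) of Theorem~\ref{thm:prodtop}.
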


We can also apply Theorem~\ref{thm:crit_td}.

\begin{theorem}
Let $(\mu_n)_{n\in\N}$ be a sequence of Fourier transformable measures on $\R^d$, and let $\mu$ be a Fourier transformable measure on $\R^d$ such that
\begin{enumerate}
\item [(i)] $\left(\mu_n(\e^{-2\pi\im t\bullet}f)\right)_{n\in\N}$ converges to $\mu(\e^{-2\pi\im t\bullet}f)$ uniformly in $t$, for all $f\in\cS(\R^d)$,
\item[(iii)] the set $\{ \widehat{\mu_n}\, :\, n\in\N\}$ is vaguely bounded.
\end{enumerate}
Then, in the product topology, we have
\[
\limn  \widehat{\mu_n} = \widehat{\mu} \,.
\]
\end{theorem}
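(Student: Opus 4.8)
The plan is to prove this the same way Theorem~\ref{thm:prodtop} was proved, with the only structural change that the vague convergence of $(\widehat{\mu_n})_{n\in\N}$ is now extracted from Theorem~\ref{thm:crit_td} rather than from Theorem~\ref{T3}; note also that $\mu$ is Fourier transformable by hypothesis, so nothing has to be shown about Fourier transformability of $\mu$.

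First I would unpack hypothesis~(i). Taking $t=0$ in~(i), it says exactly that $\limn\mu_n(f)=\mu(f)$ for all $f\in\cS(\R^d)$; since every Fourier transformable measure is tempered, all of $\mu_n$ and $\mu$ are tempered measures, so this is precisely convergence of $(\mu_n)_{n\in\N}$ to $\mu$ in the tempered distribution topology. Together with hypothesis~(iii), that $\{\widehat{\mu_n}:n\in\N\}$ is vaguely bounded, and with the Fourier transformability of $\mu$, Theorem~\ref{thm:crit_td} yields that $(\widehat{\mu_n})_{n\in\N}$ converges vaguely to $\widehat{\mu}$.

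It remains to improve this to convergence in the product topology, and here I would simply repeat the computation from the proof of Theorem~\ref{thm:prodtop}. Since $\mu_n-\mu$ is Fourier transformable with $\widehat{\mu_n-\mu}=\widehat{\mu_n}-\widehat{\mu}$, for every $\phi\in\Cc^{\infty}(\R^d)\subseteq KL(\R^d)$ one has, using $(\phi*\nu)(t)=\nu(T_t\phi^{\dagger})$, the Parseval identity $\widehat{(\mu_n-\mu)}(T_t\phi^{\dagger})=(\mu_n-\mu)\bigl(\widehat{T_t\phi^{\dagger}}\bigr)$ (valid because $\phi\in\Cc^{\infty}(\R^d)$), and $\widehat{T_t\phi^{\dagger}}=\e^{-2\pi\im t\bullet}\,\reallywidecheck{\phi}$,
\[
\|\phi*\widehat{\mu_n}-\phi*\widehat{\mu}\|_{\infty}
  =\sup_{t\in\R^d}\Bigl|\int_{\R^d}\e^{-2\pi\im tx}\,\reallywidecheck{\phi}(x)\ \dd(\mu_n-\mu)(x)\Bigr|
  \xrightarrow{n\to\infty}0 ,
\]
the limit being $0$ because $\reallywidecheck{\phi}\in\cS(\R^d)$ and hypothesis~(i) applies to it. This already gives $\widehat{\mu_n}\to\widehat{\mu}$ in the product topology when tested against the dense subspace $\Cc^{\infty}(\R^d)$, and the remaining step is to extend this to all $g\in\Cc(\R^d)$ by approximating $g$ by functions in $\Cc^{\infty}(\R^d)$ with supports in a common compact set.

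The step I expect to cost the most is precisely this last density passage. Writing $g=\phi+r$ with $\phi\in\Cc^{\infty}(\R^d)$, $\supp r$ inside a fixed compact set and $\|r\|_{\infty}$ small, a crude estimate of $\|r*(\widehat{\mu_n}-\widehat{\mu})\|_{\infty}$ involves $\sup_n\|\widehat{\mu_n}\|_{\widetilde K}$ for a compact $\widetilde K$, which in the companion result Theorem~\ref{thm:prodtop} is bounded by the equi-translation-boundedness hypothesis there but here is not directly available. The way around it is to keep the transform form throughout: for $g\in\Cc(\R^d)$ one has $\reallywidecheck{g}\in C^{\infty}(\R^d)\cap C_0(\R^d)$ and $\|g*(\widehat{\mu_n}-\widehat{\mu})\|_{\infty}=\sup_t\bigl|\int_{\R^d}\e^{-2\pi\im tx}\,\reallywidecheck{g}(x)\,\dd(\mu_n-\mu)(x)\bigr|$, and so one has to argue that hypothesis~(i), stated for Schwartz test functions, propagates to the functions $\reallywidecheck{g}$ with $g\in\Cc(\R^d)$ — this is the point where the vague boundedness of $\{\widehat{\mu_n}:n\in\N\}$, rather than mere pointwise control of the Schwartz approximants, has to be genuinely exploited.
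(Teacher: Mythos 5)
Your first two steps are sound and follow exactly the route the paper intends: taking $t=0$ in (i) gives convergence of $(\mu_n)_{n\in\N}$ to $\mu$ in the tempered distribution topology, Theorem~\ref{thm:crit_td} together with (iii) then gives vague convergence of $(\widehat{\mu_n})_{n\in\N}$ to $\widehat{\mu}$, and the Parseval computation yields $\|\phi*(\widehat{\mu_n}-\widehat{\mu})\|_\infty\to 0$ for every $\phi\in\Cc^\infty(\R^d)$. The problem is the last step, which you correctly single out as the expensive one but then leave as a description of what ``has to be'' done rather than an argument. That step cannot be completed from the stated hypotheses. Passing from $\Cc^\infty(\R^d)$ to $\Cc(\R^d)$ requires an estimate of the form $\|(g-\phi)*\widehat{\mu_n}\|_\infty\leqslant \|g-\phi\|_\infty\,\sup_n\|\widehat{\mu_n}\|_{K}$, i.e.\ equi translation boundedness of $\{\widehat{\mu_n}\,:\,n\in\N\}$; vague boundedness only controls $|\widehat{\mu_n}|$ on each fixed compact set, not uniformly over its translates, and hypothesis (i) cannot supply the missing uniformity because convolution with a merely continuous $g$ detects the modulus of continuity of $g$ at small scales, which Schwartz test functions do not see.

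Concretely, take $d=1$, $c_n=n^{3/2}$, $\eps_n=n^{-2}$, $\mu_n:=c_n\bigl(\e^{2\pi\im n\bullet}-\e^{2\pi\im (n+\eps_n)\bullet}\bigr)\lm$, so that $\widehat{\mu_n}=c_n(\delta_{n}-\delta_{n+\eps_n})$, and $\mu=0$. Then $\mu_n(\e^{-2\pi\im t\bullet}f)=c_n\bigl(\widehat{f}(t-n)-\widehat{f}(t-n-\eps_n)\bigr)$, whose supremum over $t$ is at most $c_n\eps_n\|(\widehat{f}\,)'\|_\infty=n^{-1/2}\|(\widehat{f}\,)'\|_\infty\to 0$, so (i) holds with $\mu=0$; moreover $\{\widehat{\mu_n}\,:\,n\in\N\}$ is vaguely bounded because the supports escape to infinity, so each compact set meets only finitely many of them. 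Yet for $g\in\Cc(\R)$ with $g(x)=\sqrt{|x|}$ near the origin one has $|(g*\widehat{\mu_n})(n)|=c_n\,|g(0)-g(-\eps_n)|=c_n\sqrt{\eps_n}=\sqrt{n}\to\infty$, so $(\widehat{\mu_n})_{n\in\N}$ does not converge to $\widehat{\mu}=0$ in the product topology. Thus the density passage is not merely the costly step --- it fails, and your suggestion that the vague boundedness of $\{\widehat{\mu_n}\}$ can be ``genuinely exploited'' there cannot be realized; the argument goes through only if (iii) is strengthened to equi translation boundedness of $\{\widehat{\mu_n}\}$, which puts you back in the situation of Theorem~\ref{thm:prodtop}.
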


\section{Continuity in the norm topology} \label{sec:norm}

Note that, in general, the assumptions from the last corollary are not sufficient to ensure that the sequence $(\widehat{\mu_n})_{n\in\N}$ converges to $\widehat{\mu}$ in the norm topology.

\begin{example}
Consider the sequence $(\mu_n)_{n\in\N}$ with $\mu_n=\e^{2\pi\im \frac{1}{n}\bullet}\lm$. Because of
\[
\|\mu_n-\lm \|_{[0,1]} = \sup_{t\in\R} \int_{[0,1]+t} |\e^{2\pi\im \frac{1}{n}x}-1|\ \dd x \leqslant \int_{[0,42]} |\e^{2\pi\im \frac{1}{n}x}-1|\ \dd x \xrightarrow{n\to\infty} 0 \,,
\]
the sequence $(\mu_n)_{n\in\N}$ converges to $\mu=\lm$ in the norm topology, and $\mu$ is obviously tempered/Fourier transformable. Moreover, since $\widehat{\mu_n}= \delta_{\frac{1}{n}}$, we have
\[
\|\widehat{\mu_n}\|_{[0,1]} = \sup_{t\in\R} |\delta_{\frac{1}{n}}|(t+[0,1]) =1 \,,
\]
for all $n\in\N$. Therefore, $(\mu_n)_{n\in\N}$ satisfies the assumptions from the previous corollary, and $(\widehat{\mu_n})_{n\in\N}$ converges to $\widehat{\mu}=\delta_0$ in the product topology. However, $(\widehat{\mu_n})_{n\in\N}$ does not converge to $\widehat{\mu}$ in the norm topology because
\[
\|\widehat{\mu_n} - \widehat{\mu} \|_{[0,1]}  = \sup_{t\in\R} |\delta_{\frac{1}{n}}-\delta_0|(t+[0,1]) = 2 \,,
\]
for all $n\in\N$.
\end{example}

On the other hand, if the sequence $(\mu_n)_{n\in\N}$ does not satisfy (I), the sequence $(\widehat{\mu_n})_{n\in\N}$ can still converge in the norm topology.

\begin{example}
Consider the sequence $(\mu_n)_{n\in\N}$ with $\mu_n=\delta_{\frac{1}{n}}$, for all $n\in\N$. Then, $(\mu_n)_{n\in\N}$ does not satisfy (I) (see above), but $(\widehat{\mu_n})_{n\in\N}$ converges to $\widehat{\mu}=\lm$ in the norm topology (see above).
\end{example}

\begin{theorem}  \label{thm:norm_conv}
Let $(\mu_n)_{n\in\N}$ be a sequence of Fourier transformable measures on $\R^d$, and let $\mu$ be a measure on $\R^d$ such that
\begin{enumerate}
\item[(i)] $(\mu_n)_{n\in\N}$ converges vaguely to $\mu$,
\item[(ii)] $\mu$ is tempered,
\item[(iii)] $\left(\mu_n(\e^{-2\pi\im t\bullet}\widehat{f})\right)_{n\in\N}$ converges to $\mu(\e^{-2\pi\im t\bullet}\widehat{f})$ uniformly in $(t,f)\in\R^d \times \mathcal{F}_U^{\infty}(\R^d)$, where $U$ is a precompact open set and $\mathcal{F}_U^{\infty}(\R^d):=\{g\in\Cc^{\infty}(\R^d)\, :\, |g|\leqslant 1_U\}$.
\end{enumerate}
Then, $\mu$ is Fourier transformable and, in the norm topology, we have
\[
\limn  \widehat{\mu_n} = \widehat{\mu} \,.
\]
\end{theorem}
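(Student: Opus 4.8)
The strategy is to isolate a single identity linking the translation-bound norm of a difference of Fourier transforms to the quantity that hypothesis~(iii) controls, and then to feed the resulting equi-translation boundedness into Theorem~\ref{T3}.

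Fix the precompact open set $U$ from~(iii), and for a measure $\nu$ on $\R^d$ write $\|\nu\|_U:=\sup_{t\in\R^d}|\nu|(t+U)$; covering $\overline U$ by finitely many translates of $U$ gives $\|\cdot\|_U\leqslant\|\cdot\|_{\overline U}\leqslant m\,\|\cdot\|_U$ for some $m\in\N$, so convergence in $\|\cdot\|_U$ is the same as convergence in the norm topology. The central claim I would prove is that for any two Fourier transformable measures $\sigma_1,\sigma_2$ on $\R^d$,
\begin{equation*}
\|\widehat{\sigma_1}-\widehat{\sigma_2}\|_U\;=\;\sup_{(t,f)\,\in\,\R^d\times\mathcal F_U^\infty(\R^d)}\bigl|(\sigma_1-\sigma_2)\bigl(\e^{-2\pi\im t\bullet}\widehat f\bigr)\bigr|\,.
\end{equation*}
To establish it, I would first note, using the Riesz description of the total variation $|\nu|$, inner regularity of Radon measures and mollification, that $|\nu|(t+U)=\sup\{\,|\nu(T_tg)|\;:\;g\in\Cc^\infty(\R^d),\ \supp(g)\subseteq U,\ \|g\|_\infty\leqslant1\,\}$; and since every $g\in\mathcal F_U^\infty(\R^d)$ satisfies $|\nu(T_tg)|\leqslant\int 1_{t+U}\,\dd|\nu|=|\nu|(t+U)$, enlarging the supremum to all of $\mathcal F_U^\infty(\R^d)$ does not change it, so $\|\nu\|_U=\sup_{t,\,g\in\mathcal F_U^\infty(\R^d)}|\nu(T_tg)|$. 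Taking $\nu=\widehat{\sigma_1}-\widehat{\sigma_2}$ and using the identity $\int\widehat\psi\,\dd\sigma=\int\psi\,\dd\widehat\sigma$, valid for a Fourier transformable $\sigma$ and $\psi\in\Cc^\infty(\R^d)$ — this is exactly the step used in the proof of Theorem~\ref{thm:prodtop}, and it applies here because $T_tg\in\Cc^\infty(\R^d)$ — together with $\widehat{T_tg}=\e^{-2\pi\im t\bullet}\widehat g$, gives the claimed identity (with $f=g$).

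Granting this identity, the remaining argument is short. Applying it with $\sigma_1=\mu_n$ and $\sigma_2=\mu_m$, hypothesis~(iii) and the triangle inequality give $\|\widehat{\mu_n}-\widehat{\mu_m}\|_U\to0$ as $n,m\to\infty$, so $(\widehat{\mu_n})_{n\in\N}$ is $\|\cdot\|_U$-Cauchy; in particular $\sup_n\|\widehat{\mu_n}\|_U<\infty$, hence $\{\widehat{\mu_n}\,:\,n\in\N\}$ is equi translation bounded. Now~(i), (ii) and this last fact are precisely the hypotheses of Theorem~\ref{T3}, which therefore yields that $\mu$ is Fourier transformable (and, as a byproduct, that $\widehat{\mu_n}\to\widehat\mu$ vaguely). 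Applying the identity once more, now with $\sigma_1=\mu_n$ and $\sigma_2=\mu$ — both Fourier transformable — gives $\|\widehat{\mu_n}-\widehat\mu\|_U=\sup_{(t,f)}\bigl|(\mu_n-\mu)(\e^{-2\pi\im t\bullet}\widehat f)\bigr|$, which tends to $0$ by~(iii); by the norm equivalence above, $\widehat{\mu_n}\to\widehat\mu$ in the norm topology.

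The main obstacle is the identity in the second paragraph: rewriting the translation-bound \emph{norm} of a measure on the Fourier side as a \emph{supremum of modulated physical-side pairings}. The delicate points there are the passage from $\Cc_c$ to $\Cc^\infty$ in the total-variation formula — arranged so that the competitors still satisfy $|g|\leqslant1_U$ and the required support condition — and correctly tracking the modulation $\e^{-2\pi\im t\bullet}$ that a translation of the test function produces after Fourier transforming. Everything after that is a triangle inequality plus Theorem~\ref{T3}; it is worth noting that, unlike in Theorem~\ref{thm:prodtop}, equi-translation boundedness of $\{\widehat{\mu_n}\}$ is not assumed here but is squeezed out of the stronger uniformity — in $f\in\mathcal F_U^\infty(\R^d)$, not just in $t$ — that hypothesis~(iii) builds in.
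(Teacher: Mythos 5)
Your proposal is correct and follows essentially the same route as the paper: the key identity $\|\widehat{\sigma_1}-\widehat{\sigma_2}\|_U=\sup_{(t,f)\in\R^d\times\mathcal F_U^\infty(\R^d)}|(\sigma_1-\sigma_2)(\e^{-2\pi\im t\bullet}\widehat f)|$ that you derive is exactly what the paper imports from \cite[Cor.~3.2]{SpSt}, and both arguments then extract equi translation boundedness of $\{\widehat{\mu_n}\}$ from (iii), invoke Theorem~\ref{T3}, and reuse the identity for the final norm estimate. Your detour through $\|\cdot\|_U$-Cauchyness rather than a direct bound is only a cosmetic variation.
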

\begin{proof}
It follows from (iii) and \cite[Cor. 3.2]{SpSt} that the set $\{\widehat{\mu_n}\, :\, n\in\N\}$ is equi translation bounded because
\begin{align} \label{eq:norm1}
\| \widehat{\mu_n} \|_U
    &= \sup_{(t,f)\in\R^d\times \mathcal{F}_U(\R^d)} |\widehat{\mu_n} (T_tf)|
        \notag \\
    &=  \sup_{(t,f)\in\R^d\times \mathcal{F}_U^{\infty}(\R^d)} |
        \mu_n(\widehat{T_tf})|   \\
    &=  \sup_{(t,f)\in\R^d\times \mathcal{F}_U^{\infty}(\R^d)} |
        \mu_n(\e^{-2\pi\im t\bullet} \widehat{f})| \,.   \notag
\end{align}
Thus, we can apply Theorem~\ref{T3}: $\mu$ is Fourier transformable and $(\widehat{\mu_n})_{n\in\N}$ converges vaguely to $\widehat{\mu}$. It even converges in the norm topology because (similar to Eq.~\eqref{eq:norm1})
\[
\| \widehat{\mu_n}-\widehat{\mu}\|_U = \sup_{(t,f)\in\R^d\times \mathcal{F}_U^{\infty}(\R^d)} |(\mu_n-\mu)(\e^{-2\pi\im t\bullet} \widehat{f})|  \xrightarrow{n\to\infty} 0
\]
by (iii).
\end{proof}

\begin{coro}
Let $(f_n)_{n\in\N}$ be a sequence of functions in $L^2(\R^d)$ which converges to some $f$ in $L^2(\R^d)$. Consider the sequence $(\mu_n)_{n\in\N}$ with $\mu_n=f_n\, \lm$, for all $n\in\N$. Then, $(\widehat{\mu_n})_{n\in\N}$ converges to $\widehat{\mu} = \widehat{f}\, \lm$ in the norm topology.
\end{coro}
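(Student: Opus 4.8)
The plan is to deduce this from Theorem~\ref{thm:norm_conv}, applied to $\mu_n:=f_n\,\lm$, $\mu:=f\,\lm$ and any fixed precompact open set $U$ (e.g.\ $U=(-\tfrac12,\tfrac12)^d$). The basic input is the classical fact that, for $h\in L^2(\R^d)$, the measure $h\,\lm$ is a Fourier transformable, tempered measure whose Fourier transform is the measure $\widehat h\,\lm$, where $\widehat h$ denotes the Fourier--Plancherel transform of $h$; in particular $\|\widehat h\|_{L^2}=\|h\|_{L^2}$. This identifies each $\mu_n\in\cM_T(\R^d)$, as required by Theorem~\ref{thm:norm_conv}, and pins down the target measure $\widehat\mu=\widehat f\,\lm$; it can also be verified directly from the defining identity $\langle h\,\lm,\varphi*\widetilde\varphi\rangle=\langle \widehat h\,\lm,|\reallywidecheck\varphi|^2\rangle$ for $\varphi\in\Cc(\R^d)$ via Parseval.

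Next I would check the three hypotheses of Theorem~\ref{thm:norm_conv}, each of which collapses to Cauchy--Schwarz together with Plancherel. For (i): any $\varphi\in\Cc(\R^d)$ lies in $L^2(\R^d)$, so $|\mu_n(\varphi)-\mu(\varphi)|\leqslant\|\varphi\|_{L^2}\,\|f_n-f\|_{L^2}\to 0$, i.e.\ $\mu_n\to\mu$ vaguely. For (ii): $f\in L^2(\R^d)\subseteq\cS'(\R^d)$, hence $\mu=f\,\lm$ is a tempered measure. For (iii), writing $g$ for the test function ranging over $\mathcal{F}_U^{\infty}(\R^d)$, the bound $|g|\leqslant 1_U$ gives $\|\widehat g\|_{L^2}=\|g\|_{L^2}\leqslant\lm(U)^{1/2}$, so for every $t\in\R^d$
\[
\bigl|(\mu_n-\mu)(\e^{-2\pi\im t\bullet}\,\widehat g)\bigr|
=\Bigl|\int_{\R^d}\e^{-2\pi\im tx}\,\widehat g(x)\,(f_n-f)(x)\ \dd x\Bigr|
\leqslant\|\widehat g\|_{L^2}\,\|f_n-f\|_{L^2}
\leqslant\lm(U)^{1/2}\,\|f_n-f\|_{L^2}\,,
\]
and the right-hand side is independent of $(t,g)$ and tends to $0$; this is precisely the uniformity in $(t,g)\in\R^d\times\mathcal{F}_U^{\infty}(\R^d)$ demanded in Theorem~\ref{thm:norm_conv}(iii). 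Theorem~\ref{thm:norm_conv} then yields that $\mu$ is Fourier transformable and $\widehat{\mu_n}\to\widehat\mu$ in the norm topology, and by the first paragraph $\widehat\mu=\widehat f\,\lm$.

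I do not expect a genuine obstacle here: the only step that is not a one-line estimate is the classical identification of $h\,\lm$ ($h\in L^2$) as a Fourier transformable measure with transform $\widehat h\,\lm$, and even that is routine. In fact the corollary admits a shorter self-contained proof bypassing Theorem~\ref{thm:norm_conv}: for any compact $K$ that is the closure of its interior, Cauchy--Schwarz gives $\|(\widehat{f_n}-\widehat f)\,\lm\|_K=\sup_{t\in\R^d}\int_{t+K}|\widehat{f_n}-\widehat f|\ \dd x\leqslant\lm(K)^{1/2}\,\|\widehat{f_n}-\widehat f\|_{L^2}=\lm(K)^{1/2}\,\|f_n-f\|_{L^2}\to 0$. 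I would include this as a remark, but present the Theorem~\ref{thm:norm_conv} derivation as the main argument so as to keep the section's narrative intact.
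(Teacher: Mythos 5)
Your proposal is correct and matches the paper's proof in substance: the paper gives exactly the same two arguments, namely the direct Cauchy--Schwarz-plus-Plancherel estimate $\|\widehat{\mu_n}-\widehat{\mu}\|_K\leqslant |K|^{1/2}\,\|f_n-f\|_{L^2}$ and the observation that the hypotheses of Theorem~\ref{thm:norm_conv} are satisfied, merely presenting them in the opposite order (direct proof first, theorem-based proof as the alternative). Your verification of hypotheses (i)--(iii) is the routine filling-in of what the paper leaves as ``it is not difficult to see,'' and it is carried out correctly.
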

\begin{proof}
First, we will give a direct proof. Let $K$ be any compact set in $\R^d$. Then, the claim follows from
\begin{align*}
\|\widehat{\mu_n} - \widehat{\mu}\|_K
    &= \sup_{t\in\R^d} \int_{t+K} |\widehat{f_n}(x) - \widehat{f}(x)|\ \dd x  \\
    &\leqslant \sup_{t\in\R^d}\left( \int_{t+K} 1^2\ \dd x\right)^{\frac{1}{2}}
       \, \left( \int_{t+K} |\widehat{f_n}(x) - \widehat{f}(x)|^2\ \dd x
       \right)^{\frac{1}{2}}  \\
    &\leqslant |K|^{\frac{1}{2}}\, \|\widehat{f_n-f}\|_{L^2}  \\
    &= |K|^{\frac{1}{2}}\, \|f_n-f\|_{L^2} \,,
\end{align*}
by an application of H\"olders inequality and Plancherels theorem.

\medskip

\noindent Alternatively, it is not difficult to see that $(\mu_n)_{n\in\N}$ satisfies the assumptions of Theorem~\ref{thm:norm_conv}.
\end{proof}

\begin{coro}
Let $(\mu_n)_{n\in\N}$ be a sequence of Fourier transformable measures on $\R^d$, and let $\mu$ be a Fourier transformable measure on $\R^d$ such that
\[
 \left(\mu_n(\e^{-2\pi\im t\bullet}\widehat{f})\right)_{n\in\N} \text{ converges to } \mu(\e^{-2\pi\im t\bullet}\widehat{f}) \text{ uniformly in } (t,f)\in\R^d \times \mathcal{F}_U^{\infty}(\R^d).
\]
Then, in the norm topology, we have
\[
\limn  \widehat{\mu_n} = \widehat{\mu} \,.
\]
\end{coro}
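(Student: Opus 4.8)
The plan is to obtain the norm convergence directly, by a single application of the identity that underlies Equation~\eqref{eq:norm1} in the proof of Theorem~\ref{thm:norm_conv}, but applied to the \emph{difference} measure $\mu_n-\mu$ rather than to $\mu_n$ and $\mu$ separately. The point of proceeding this way is that $\mu$ is only assumed to be Fourier transformable, so $\widehat{\mu}$ need not be translation bounded; consequently $\|\widehat{\mu}\|_U$ may be infinite and one cannot split $\|\widehat{\mu_n}-\widehat{\mu}\|_U$ into $\|\widehat{\mu_n}\|_U$ and $\|\widehat{\mu}\|_U$ and treat the two pieces in isolation.

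First I would record the elementary fact that, since $\mu_n$ and $\mu$ are both Fourier transformable, the measure $\nu_n:=\mu_n-\mu$ is Fourier transformable as well, with $\widehat{\nu_n}=\widehat{\mu_n}-\widehat{\mu}$. This follows from the $KL(\R^d)$-characterisation of Fourier transformability recalled after the definition of the Fourier transform: for every $f\in KL(\R^d)$ one has $\reallywidecheck{f}\in L^1(|\widehat{\mu_n}|)\cap L^1(|\widehat{\mu}|)\subseteq L^1(|\widehat{\mu_n}-\widehat{\mu}|)$ and $\langle\mu_n-\mu,f\rangle=\langle\widehat{\mu_n}-\widehat{\mu},\reallywidecheck{f}\rangle$.

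Next I would invoke \cite[Cor.~3.2]{SpSt} exactly as in Equation~\eqref{eq:norm1}: for a Fourier transformable measure $\nu$ and a precompact open set $U$, the quantity $\|\widehat{\nu}\|_U=\sup_{t\in\R^d}|\widehat{\nu}|(t+U)$, which may a priori be infinite, equals the supremum of $|\widehat{\nu}(T_tf)|$ over $t\in\R^d$ and $f\in\mathcal{F}_U^{\infty}(\R^d)$; and since $f\in\Cc^{\infty}(\R^d)\subseteq KL(\R^d)$, the transformability pairing turns $\widehat{\nu}(T_tf)$ into $\nu(\widehat{T_tf})=\nu(\e^{-2\pi\im t\bullet}\widehat{f})$. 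Applying this to $\nu=\nu_n=\mu_n-\mu$ and using $\widehat{\nu_n}=\widehat{\mu_n}-\widehat{\mu}$ yields
\[
\|\widehat{\mu_n}-\widehat{\mu}\|_U=\sup_{(t,f)\in\R^d\times\mathcal{F}_U^{\infty}(\R^d)}\bigl|\mu_n(\e^{-2\pi\im t\bullet}\widehat{f})-\mu(\e^{-2\pi\im t\bullet}\widehat{f})\bigr|\xrightarrow{n\to\infty}0
\]
by the hypothesis. Since $\|\cdot\|_K\leqslant\|\cdot\|_U$ whenever $K\subseteq U$ is compact, choosing $K$ to be a closed ball contained in $U$ (which is the closure of its interior) shows that $(\widehat{\mu_n})_{n\in\N}$ converges to $\widehat{\mu}$ in the norm topology.

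I expect the only genuinely delicate point to be exactly the one flagged above, namely keeping $\mu_n-\mu$ together as one Fourier transformable measure so that \cite[Cor.~3.2]{SpSt} (in the form used for Equation~\eqref{eq:norm1}) can be applied a single time to it. Once this is set up, no detour through vague convergence or through Theorem~\ref{T3} is needed, in contrast with the proof of Theorem~\ref{thm:norm_conv}, where the weaker hypotheses force one first to establish that $\mu$ is Fourier transformable; here that is part of the assumption.
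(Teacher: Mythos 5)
Your proof is correct and takes essentially the same route as the paper: the identity $\|\widehat{\mu_n}-\widehat{\mu}\|_U=\sup_{(t,f)\in\R^d\times\mathcal{F}_U^{\infty}(\R^d)}\left|(\mu_n-\mu)(\e^{-2\pi\im t\bullet}\widehat{f})\right|$, obtained via \cite[Cor.~3.2]{SpSt} applied to the single Fourier transformable measure $\mu_n-\mu$, is precisely the final display in the paper's proof of Theorem~\ref{thm:norm_conv}. Your write-up is in fact more self-contained than the paper's one-line reference to Theorem~\ref{thm:crit_td} (which by itself only yields vague convergence), and your remark that the difference must be kept together because $\widehat{\mu}$ need not be translation bounded is a correct and relevant observation.
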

\begin{proof}
This follows from Theorem~\ref{thm:crit_td}.
\end{proof}

\section{An application of Theorem~\ref{T3}}  \label{sec:application}

Next, we will make use of Theorem~\ref{T3} to give a necessary and sufficient condition for a measure $\mu$ on $\R^d$ to be Fourier transformable. Throughout this section, $(f_n)_{n\in\N}$ denotes an approximate identity for $(\Cu(\R^d),*)$ such that, for all $n\in\N$:
\begin{enumerate}
\item[$\bullet$] $f_n\in\Cc(\R^d)$,
\item[$\bullet$] there is a compact set $K\subseteq \R^d$ (independent of $n$) with $\supp(f_n)\subseteq K$,
\item[$\bullet$] $f_n\geqslant 0$,
\item[$\bullet$] $\int_{\R^d} f_n(x)\ \dd x =1$,
\item[$\bullet$] $f_n$ is positive definite.
\end{enumerate}
Such a sequence of functions exists by \cite[Thm. (44.20)]{HR}. Note that, by \cite[Lem. 3.6]{CRS}, one also has $\widehat{f_n} \in L^1(\R^d)$.

\begin{lemma} \label{lem:helpme1}
Let $\mu$ be a Fourier transformable measure, and let $\mu_n:=(f_n*\mu)\, \lm$, for all $n\in\N$. Then, $(\mu_n)_{n\in\N}$ converges vaguely to $\mu$.
\end{lemma}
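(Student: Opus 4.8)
The goal is to show that if $\mu$ is Fourier transformable and $f_n$ is an approximate identity with the five listed properties, then $\mu_n := (f_n * \mu)\,\lm$ converges vaguely to $\mu$. The natural approach is to test against $g \in \Cc(\R^d)$ and rewrite $\mu_n(g)$ as an integral against $\mu$ so that the approximate-identity property can be brought to bear.

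First I would compute, for $g \in \Cc(\R^d)$,
\[
\mu_n(g) = \int_{\R^d} (f_n * \mu)(x)\, g(x)\ \dd x
= \int_{\R^d} \int_{\R^d} f_n(x - y)\ \dd\mu(y)\, g(x)\ \dd x \,.
\]
Assuming we may interchange the order of integration (which needs a short justification: $\mu$ is a measure, $f_n \geqslant 0$ has fixed compact support $K$, and $g$ has compact support, so the double integral is absolutely convergent — the $y$-integration effectively ranges over $\supp(g) - K$, a fixed compact set, and $|\mu|$ is finite there), this becomes
\[
\mu_n(g) = \int_{\R^d} \left( \int_{\R^d} f_n(x-y)\, g(x)\ \dd x \right) \dd\mu(y)
= \int_{\R^d} (f_n^{\dagger} * g)(y)\ \dd\mu(y) = \mu\big(f_n^{\dagger} * g\big) \,.
\]
Since $f_n$ need not be symmetric I keep the $\dagger$; but in fact the listed properties ($f_n \geqslant 0$, $\int f_n = 1$, positive definite) force $f_n$ to be real and even, so $f_n^{\dagger} = f_n$ and $\mu_n(g) = \mu(f_n * g)$ — either way the structure is the same.

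Next, since $(f_n)_{n\in\N}$ is an approximate identity for $(\Cu(\R^d), *)$ and $g \in \Cc(\R^d) \subseteq \Cu(\R^d)$, we have $f_n * g \to g$ uniformly. Moreover all the functions $f_n * g$ are supported in the fixed compact set $\supp(g) + K$, so $f_n * g \to g$ in the inductive-limit topology on $\Cc(\R^d)$. Applying the measure $\mu$ (which is continuous on $\Cc(\R^d)$ with the inductive topology, with the bound $|\mu(h)| \leqslant a_{K'}\|h\|_\infty$ for $h$ supported in $K' := \supp(g) + K$) gives
\[
\mu_n(g) = \mu(f_n * g) \xrightarrow{n\to\infty} \mu(g) \,.
\]
As $g \in \Cc(\R^d)$ was arbitrary, this is precisely vague convergence $\mu_n \to \mu$.

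The only genuinely delicate point is the Fubini interchange in the first step, and even that is routine once one notes everything is confined to a fixed compact set: the effective domain of the $y$-integration is contained in $\supp(g) - K$, on which $|\mu|$ is a finite measure, and the integrand $f_n(x-y)g(x)$ is continuous and compactly supported in $(x,y)$, so Tonelli/Fubini applies with no obstruction. I do not need Fourier transformability of $\mu$ for this lemma at all — it is stated in the hypothesis presumably because the lemma will be applied in that context (and to guarantee $\mu$ is tempered, so that the $\mu_n$ themselves are well-behaved), but the vague convergence statement only uses that $\mu$ is a measure and the approximate-identity properties of $(f_n)_{n\in\N}$.
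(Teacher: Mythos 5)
Your proposal is correct and follows essentially the same route as the paper: rewrite $\mu_n(g)$ via Fubini as $\mu(f_n * g)$ and invoke the approximate-identity property, the only cosmetic difference being that you conclude via uniform convergence on a fixed compact support set while the paper phrases the last step as dominated convergence applied to $\int |(f_n*g)(y)-g(y)|\,\dd|\mu|(y)$. Your extra care about the Fubini interchange, the $f_n^{\dagger}=f_n$ point, and the observation that Fourier transformability is not actually used are all accurate.
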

\begin{proof}
The claim follows because $(f_n)_{n\in\N}$ is an approximate identity:
\begin{align*}
| \mu_n(\phi) - \mu(\phi)|
    &= \left| \int_{\R^d} \int_{\R^d} f_n(x-y)\, \phi(x)\ \dd x\ \dd \mu(y)
       - \int_{\R^d} \phi(y)\ \dd\mu(y) \right|  \\
    &\leqslant \int_{\R^d} \big| (f_n*\phi)(y) - \phi(y)\big|\ \dd |\mu|(y)   \\
    &\xrightarrow{n\to\infty} 0 \,,
\end{align*}
for all $\phi\in\Cc(\R^d)$, where we applied the dominated convergence theorem in the last step.
\end{proof}

\begin{lemma} \label{lem:helpme2}
Let $\mu$ be a Fourier transformable measure. Let $\nu_n:=\widehat{f_n} \, \widehat{\mu}$, for all $n\in\N$. Then, $\{ \nu_n\ :\ n\in\N\}$ is equi translation bounded.
\end{lemma}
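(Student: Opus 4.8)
The plan is to show that each $\nu_n = \widehat{f_n}\,\widehat{\mu}$ is translation bounded with a bound uniform in $n$, by exploiting that $\widehat{f_n}$ decays nicely (it lies in $L^1(\R^d)$ and, more importantly, $f_n$ being positive definite forces $\widehat{f_n}\geqslant 0$) while $\widehat{\mu}$ is a fixed measure, hence tempered. First I would record the key structural facts: since $\mu$ is Fourier transformable, $\widehat{\mu}$ exists as a measure, and by \cite[Thm.~7.2]{ARMA1} it is tempered; since $f_n$ is positive definite with $\int f_n = 1$, one has $\widehat{f_n}\geqslant 0$ and $\widehat{f_n}(0) = \int f_n = 1$, and in fact $\|\widehat{f_n}\|_\infty \leqslant 1$. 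So $\nu_n$ is a nonnegative measure dominated pointwise (as a density against $\widehat{\mu}$, after noting $\widehat{\mu}$ need not be positive — one should pass to $|\nu_n| = |\widehat{f_n}|\,|\widehat{\mu}| \leqslant \widehat{f_n}\,|\widehat{\mu}|$, wait, $|\widehat{\mu}|$ is the total variation) by $|\widehat{\mu}|$ via a bounded factor.

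The main estimate I would run is the following. Fix the compact cube $Q := [-\tfrac12,\tfrac12]^d$. For $t\in\R^d$,
\[
|\nu_n|(t+Q) \;=\; \int_{t+Q} |\widehat{f_n}(x)|\ \dd|\widehat{\mu}|(x) \;\leqslant\; \|\widehat{f_n}\|_\infty\, |\widehat{\mu}|(t+Q).
\]
This is not good enough on its own, because $\widehat{\mu}$ need not be translation bounded — that is precisely what we are trying to avoid assuming. Instead, the right move is to use temperedness of $\widehat{\mu}$ together with decay of $\widehat{f_n}$: temperedness gives a constant $a>0$ and an integer $k$ with $|\widehat{\mu}|(t+Q)\leqslant a\,(1+|t|^2)^k$ for all $t$, but the $f_n$ do not have uniform decay of that polynomial order, so this direct route also stalls. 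The genuinely correct approach, and the one I expect the paper intends, is to go through Lemma~\ref{L1}: it suffices to show $\nu_n * g \in \Cu(\R^d)$ with a uniform bound for $g\in\Cc(\R^d)$. Now $\nu_n = \widehat{f_n}\,\widehat{\mu} = \widehat{f_n}\,\widehat{\mu}$, and since $f_n$ is supported in the fixed compact set $K$ and $\mu$ is Fourier transformable, the convolution theorem gives $\widehat{f_n}\,\widehat{\mu} = \widehat{f_n * \mu}$ in an appropriate sense; more cleanly, $\mu_n := (f_n*\mu)\,\lm$ from Lemma~\ref{lem:helpme1} has $\widehat{\mu_n} = \widehat{f_n*\mu} = \widehat{f_n}\,\widehat{\mu} = \nu_n$ when this makes sense (one should check $f_n*\mu$ is a Fourier transformable — indeed bounded and uniformly continuous — function).

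So the cleanest plan is: (1) show $f_n * \mu \in \Cu(\R^d)$ with $\|f_n*\mu\|_\infty$ bounded uniformly in $n$ — this uses that all $f_n$ are supported in the common compact $K$, that $\mu$ is tempered hence slowly growing in the averaged sense, and a fixed bound on $\|f_n\|_{?}$; here the uniform support is essential; (2) conclude $\mu_n = (f_n*\mu)\lm$ is a translation bounded measure, and argue $\widehat{\mu_n} = \nu_n$ using $\widehat{f_n}\in L^1$ and the convolution identity $\widehat{f*\mu} = \widehat{f}\,\widehat{\mu}$ valid for $f\in\Cc(\R^d)$ with $\widehat{f}\in L^1$ and $\mu$ Fourier transformable; (3) alternatively and more directly, estimate $\|\nu_n\|_Q$ by splitting $\R^d$ into unit cubes $m+Q$, $m\in\Z^d$, writing $|\widehat{\mu}|(t+Q)\leqslant C(1+|t|^2)^\ell$, and using that $f_n$ positive definite supported in $K$ gives $|\widehat{f_n}(x)|\leqslant \widehat{f_n}(0) = 1$ — but then one still needs decay, so step (3) really must borrow a uniform decay estimate for $\widehat{f_n}$, which one gets from writing $\widehat{f_n}(x) = \int_K f_n(y)e^{-2\pi i xy}\,\dd y$ and integrating by parts; however the $f_n$ need not be smooth, so this fails and step (1)–(2) is the viable route. \textbf{The main obstacle} is precisely that $\widehat{\mu}$ itself is only tempered, not translation bounded, so one cannot bound $\nu_n$ by naively multiplying; the uniform translation bound must come from the product \emph{collapsing} via $\nu_n = \widehat{f_n*\mu}$ with $f_n*\mu$ a genuinely nice (bounded, uniformly continuous, uniformly supported-source) function, at which point translation boundedness of $\mu_n=(f_n*\mu)\lm$ is automatic and uniform in $n$, and $\nu_n = \widehat{\mu_n}$ inherits it.
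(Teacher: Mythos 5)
There is a genuine gap, and ironically it lies in the sentence where you abandon the correct argument. Your opening estimate
\begin{displaymath}
|\nu_n|(t+K) \;=\; \int_{t+K} |\widehat{f_n}(x)|\ \dd|\widehat{\mu}|(x) \;\leqslant\; \|\widehat{f_n}\|_\infty\, |\widehat{\mu}|(t+K) \;\leqslant\; |\widehat{\mu}|(t+K)
\end{displaymath}
is exactly the paper's proof; the only missing ingredient is the classical fact that the Fourier transform of a Fourier transformable measure is \emph{automatically} translation bounded (Argabright--Gil de Lamadrid \cite{ARMA1}, Thm.~2.5). Your claim that ``$\widehat{\mu}$ need not be translation bounded --- that is precisely what we are trying to avoid assuming'' is false: translation boundedness of $\widehat{\mu}$ is not an extra hypothesis here but a theorem, and with it the estimate above gives $\|\nu_n\|_K \leqslant \|\widehat{\mu}\|_K < \infty$ uniformly in $n$, which is the whole proof. (Your preliminary facts are fine: $f_n\geqslant 0$ makes $\widehat{f_n}$ positive definite, hence $|\widehat{f_n}(x)|\leqslant \widehat{f_n}(0)=\int f_n=1$.)

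The detour you propose instead does not work. In step (1) you want $f_n*\mu\in \Cu(\R^d)$ with $\|f_n*\mu\|_\infty$ bounded uniformly in $n$; but $(f_n*\mu)(x)=\int f_n(x-y)\,\dd\mu(y)$ is controlled by $|\mu|(x-K)$, so boundedness of $f_n*\mu$ is essentially equivalent to translation boundedness of $\mu$ itself --- which is \emph{not} assumed (Fourier transformability only gives temperedness, i.e.\ polynomial growth of $|\mu|(x+K)$, hence polynomial growth of $f_n*\mu$). So the object whose translation boundedness you would need for your route is $\mu$, while the object whose translation boundedness you dismissed is $\widehat{\mu}$; the hypothesis ``$\mu$ Fourier transformable'' guarantees the latter and not the former. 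Once you reinstate the discarded estimate and cite the translation boundedness of $\widehat{\mu}$, the lemma follows in two lines, exactly as in the paper.
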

\begin{proof}
First note that $\widehat{f_n}(0)=\int_{\R^d} f_n(x)\ \dd x=1$, for all $n\in\N$. Also note that $\widehat{f_n}$ is positive definite, since $f_n$ is non-negative. Hence, we have
\[
\big| \widehat{f_n}(x) \big| \leqslant \widehat{f_n}(0) =1
\]
for all $x\in \R^d$. Consequently, for a fixed compact set $K\subseteq \R^d$, one has
\[
\| \nu_n\|_K = \sup_{t\in\R^d} \int_{t+K} |\widehat{f_n}(x)|\ \dd|\widehat{\mu}| (x) \leqslant \| \widehat{\mu}\|_K  \,,
\]
for all $n\in\N$.
\end{proof}

Now, we can apply the previous two lemmas and Theorem~\ref{T3} to characterise Fourier transformable measures on $\R^d$.

\begin{theorem}
Let $\mu$ be a measure on $\R^d$. Then, $\mu$ is Fourier transformable if and only if it is tempered, and there is a sequence of finite measures $(\nu_n)_{n\in\N}$ that satisfies the following two properties:
\begin{enumerate}
\item[$\bullet$] the set $\{\nu_n\ :\ n\in\N\}$ is equi translation bounded,
\item[$\bullet$] $\reallywidecheck{\nu_n} = (f_n*\mu)\, \lm$.
\end{enumerate}
\end{theorem}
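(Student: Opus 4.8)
The plan is to establish the two directions separately, using Lemma~\ref{lem:helpme1}, Lemma~\ref{lem:helpme2} and Theorem~\ref{T3} for the forward implication, and a direct argument for the converse.

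For the forward implication, suppose $\mu$ is Fourier transformable. By the theorem of \cite{ARMA1} quoted above, $\mu$ is automatically tempered, which gives the first half of the required conclusion. Now set $\nu_n := \widehat{f_n}\,\widehat{\mu}$, for all $n\in\N$. Since $f_n$ is positive definite and $\int_{\R^d} f_n\,\dd x = 1$, we have $\widehat{f_n}\in\Cu(\R^d)$ with $\|\widehat{f_n}\|_\infty = \widehat{f_n}(0) = 1$, so $\nu_n$ is a well-defined (signed/complex) measure; it is finite because $\widehat{f_n}\in L^1(\R^d)$ and $\widehat{\mu}$ is a tempered, hence translation bounded on a fixed compact set --- more carefully, $\nu_n = \widehat{f_n\ast\mu}$ where $f_n\ast\mu\in\Cu(\R^d)$ and one checks $f_n\ast\mu\in L^1$ using the approximate-identity bound together with translation boundedness of $\widehat\mu$ --- and in any case finiteness can be read off from the identity $\reallywidecheck{\nu_n}=(f_n\ast\mu)\,\lm$ established below. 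Lemma~\ref{lem:helpme2} gives that $\{\nu_n : n\in\N\}$ is equi translation bounded. For the second bullet, I would compute $\reallywidecheck{\nu_n}$: by the convolution theorem for Fourier transformable measures, $\widehat{f_n}\,\widehat{\mu} = \widehat{f_n\ast\mu} = \widehat{(f_n\ast\mu)\,\lm}$ (using that $f_n\ast\mu$ is a bounded continuous function, so that $(f_n\ast\mu)\,\lm$ is a Fourier transformable measure with the expected transform), and applying the inversion $\reallywidecheck{\widehat{\sigma}} = \sigma$ valid here gives $\reallywidecheck{\nu_n} = (f_n\ast\mu)\,\lm$, exactly the stated property.

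For the converse, suppose $\mu$ is tempered and such a sequence $(\nu_n)_{n\in\N}$ of finite measures exists with $\{\nu_n : n\in\N\}$ equi translation bounded and $\reallywidecheck{\nu_n} = (f_n\ast\mu)\,\lm =: \mu_n$. Each $\mu_n$ is then Fourier transformable (its inverse-Fourier-transform-partner is the finite measure $\nu_n$; passing from $\reallywidecheck{\nu_n}$ to $\widehat{\mu_n}$ only costs a reflection, and $\nu_n$ finite $\Rightarrow$ $\reallywidecheck{\nu_n}$ and $\widehat{\nu_n}$ exist). By Lemma~\ref{lem:helpme1}, $(\mu_n)_{n\in\N}$ converges vaguely to $\mu$. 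The set $\{\widehat{\mu_n} : n\in\N\}$ equals $\{\nu_n^{\dagger} : n\in\N\}$ (up to the reflection relating $\reallywidecheck{\cdot}$ and $\widehat{\cdot}$), which is equi translation bounded by hypothesis, since reflection preserves $\|\cdot\|_K$ for symmetric $K$ (or, more simply, for $K$ and $-K$ which give the same translation-bounded class). Thus the hypotheses (i), (ii), (iii) of Theorem~\ref{T3} are all in place --- vague convergence of $(\mu_n)$ to $\mu$, temperedness of $\mu$, and equi translation boundedness of $\{\widehat{\mu_n}\}$ --- and Theorem~\ref{T3} yields that $\mu$ is Fourier transformable (and, as a bonus, $\widehat{\mu_n}\to\widehat{\mu}$ vaguely).

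The main obstacle I anticipate is bookkeeping around the two conventions $\widehat{\cdot}$ and $\reallywidecheck{\cdot}$ and the precise form of the convolution theorem $\widehat{f\ast\mu} = \widehat{f}\,\widehat{\mu}$ for a bounded continuous function $f = g\ast\widetilde g$-type / positive-definite $f$ convolved with a Fourier transformable measure $\mu$; one must make sure $(f_n\ast\mu)\,\lm$ is genuinely Fourier transformable as a measure (not merely a tempered distribution) so that the identification $\reallywidecheck{\nu_n}=(f_n\ast\mu)\,\lm$ is an equality of measures, and that finiteness of $\nu_n$ is correctly justified --- here the hypotheses on $f_n$ (compact support, positive definiteness, $\widehat{f_n}\in L^1$) are exactly what make this work, and one invokes the standard Fourier theory for measures from \cite{ARMA1, CRS} rather than reproving it.
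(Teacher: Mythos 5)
Your proposal is correct and follows essentially the same route as the paper: the forward direction takes $\nu_n=\widehat{f_n}\,\widehat{\mu}$, uses Lemma~\ref{lem:helpme2} for equi translation boundedness and the standard convolution/inversion identities for $\reallywidecheck{\nu_n}=(f_n*\mu)\,\lm$; the converse combines Lemma~\ref{lem:helpme1} with the hypothesis on $\{\nu_n\}$ to verify the hypotheses of Theorem~\ref{T3}. Your explicit observation that the equi translation boundedness of $\{\widehat{\mu_n}\}$ in the converse comes from the hypothesis on $\{\nu_n\}$ (rather than from Lemma~\ref{lem:helpme2}, which presupposes Fourier transformability of $\mu$) is if anything slightly cleaner than the paper's wording.
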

\begin{proof}
First assume that $\mu$ is a Fourier transformable meassure. Note that every Fourier transformable measure is tempered. Define $\nu_n:= \widehat{f_n} \, \widehat{\mu}$, for all $n\in\N$. By \cite[Prop. 3.9]{CRS}, $\nu_n$ is a finite measure, for all $n\in\N$, because
\[
\nu_n(\R^d) = \int_{\R^d} \widehat{f_n}(x)\ \dd\widehat{\mu}(x) \,.
\]
Moreover, \cite[Lem. 4.9.24]{TAO2} implies $\reallywidecheck{\nu_n} = (f_n*\mu)\, \lm$.

\bigskip

On the other hand, assume that $\mu$ is tempered, and that the two properties are satisfied. Let $\mu_n:= (f_n*\mu)\, \lm$, for all $n\in\N$. Then,
\begin{enumerate}
\item[1.] $\mu_n$ is Fourier transformable, for all $n\in\N$, because it is the Fourier transform of a finite measure, and every finite measure is twice Fourier transformable,
\item[2.] $(\mu_n)_{n\in\N}$ converges vaguely to $\mu$ by Lemma~\ref{lem:helpme1},
\item[3.] $\{\widehat{\mu_n}\ :\ n\in\N\}$ is equi translation bounded by Lemma~\ref{lem:helpme2}.
\end{enumerate}
Therefore, we can apply Theorem~\ref{T3}, which tells us that $\mu$ is Fourier transformable.
\end{proof}

\section{Positive definite measures} \label{sec:pd}

Now, we will consider positive definite measures on $\R^d$. For simplicity, we denote by $\cM_{\text{pd}}(\R^d)$ the set of positive definite measures on $\R^d$.
Let us start with the following known result (compare \cite{BF}).

\begin{proposition} \cite[Lem. 4.11.10]{TAO2}  \label{prop:pd_vauge}
Let $(\mu_{\alpha})_{\alpha}$ be a net of positive definite measures which converges vaguely to a measure $\mu$. Then, $\mu$ is positive definite and, in the vague topology, we have
\[
\lim_{\alpha} \widehat{\mu_{\alpha}} = \widehat{\mu} \,.
\]
\end{proposition}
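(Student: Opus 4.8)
The plan is to exploit the fact that positive definite measures are automatically Fourier transformable with \emph{positive} Fourier transform, and to reduce the vague convergence statement to Theorem~\ref{T3} by checking that $\{\widehat{\mu_\alpha}\}$ is equi translation bounded. First I would recall that if $\mu$ is positive definite then $\mu$ is Fourier transformable and $\widehat{\mu}\geqslant 0$; this is the Bochner--Schwartz type result for measures (cf.\ \cite{BF,TAO2}). Applying this to each $\mu_\alpha$ gives a net of positive measures $\widehat{\mu_\alpha}$ on $\R^d$. The first real task is to show that $\{\widehat{\mu_\alpha}\}$ is vaguely bounded, equivalently (by Proposition~\ref{P2}) equi translation bounded, or even just that $\{\widehat{\mu_\alpha}(K)\}$ is bounded for each compact $K$.

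The key step is a testing-function argument. Fix a nonnegative $\phi\in\Cc^\infty(\R^d)$ of the form $\phi=\psi*\widetilde{\psi}$ with $\psi\in\Cc^\infty(\R^d)$, so that $\reallywidecheck{\phi}=|\reallywidecheck{\psi}|^2\geqslant 0$ and, crucially, $\reallywidecheck{\phi}$ is a nonnegative Schwartz function which is bounded below by a positive constant on a prescribed compact set $K$ (achieved by choosing $\psi$ suitably, e.g.\ so that $\reallywidecheck\psi$ does not vanish on $K$). Then for each $\alpha$,
\[
c_K\,\widehat{\mu_\alpha}(K)\;\leqslant\;\int_{\R^d}\reallywidecheck{\phi}\ \dd\widehat{\mu_\alpha}\;=\;\widehat{\mu_\alpha}(\reallywidecheck{\phi})\;=\;\mu_\alpha(\phi)\;\xrightarrow{\ \alpha\ }\;\mu(\phi)\,,
\]
using positivity of $\widehat{\mu_\alpha}$ for the first inequality, the defining identity of the Fourier transform (or the $KL$-version in the Remark) for the middle equality, and vague convergence $\mu_\alpha\to\mu$ for the limit (note $\phi\in\Cc(\R^d)$). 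Since a convergent net is bounded, $\sup_\alpha\widehat{\mu_\alpha}(K)<\infty$. As $K$ was arbitrary, $\{\widehat{\mu_\alpha}\}$ is vaguely bounded, hence by Proposition~\ref{P2} it is relatively vaguely compact and equi translation bounded.

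With equi translation boundedness of $\{\widehat{\mu_\alpha}\}$ in hand, I would invoke Theorem~\ref{T3}: since $\mu$ is Fourier transformable (being positive definite, or simply being a vague limit to which the theorem applies once we know $\mu$ is tempered — which again follows from positive definiteness via the cited results), we conclude $\mu$ is Fourier transformable and $\widehat{\mu_\alpha}\to\widehat{\mu}$ vaguely; positivity of $\widehat{\mu}$ passes to the limit, and positive definiteness of $\mu$ follows since $\widehat{\mu}\geqslant 0$ characterises positive definiteness among Fourier transformable measures. One caveat: Theorem~\ref{T3} as stated is for \emph{sequences}, whereas the statement here is for \emph{nets}; I would either remark that the cited reference \cite[Lem.~4.11.10]{TAO2} handles the net case directly, or note that the metrisability clause in Proposition~\ref{P2}/Proposition~\ref{P3} lets one reduce net statements to sequential ones on the relevant vaguely compact set. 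The main obstacle is precisely this interplay — producing the lower bound $c_K>0$ for $\reallywidecheck\phi$ on $K$ via an explicit choice of $\psi$, and making sure the whole argument is valid for nets rather than sequences — but both are routine once the structure above is set up; indeed the cleanest route is simply to cite \cite[Lem.~4.11.10]{TAO2} as the authors do.
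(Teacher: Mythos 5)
Your proposal follows essentially the same route as the paper: your test-function estimate $c_K\,\widehat{\mu_\alpha}(K)\leqslant \mu_\alpha(\phi)$ is exactly Lemma~\ref{L2}(i) and Corollary~\ref{cor1} (the paper simply cites the existence of $f\in\Cc(\R^d)$ with $\reallywidecheck{f}\geqslant 1_K$ rather than constructing it from $\psi*\widetilde{\psi}$), and the conclusion is then drawn from Theorem~\ref{T3}, just as the paper deduces the proposition from Corollaries~\ref{cor1} and~\ref{coro:hilfe1}. The only imprecisions --- that a convergent net is merely \emph{eventually} bounded rather than bounded, and that Theorem~\ref{T3} is stated for sequences while the proposition concerns nets --- are shared by the paper's own argument, and you flag the latter appropriately.
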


In this section, we try to clarify why, when restricting to positive definite measures, the Fourier transform becomes continuous. The key is the following result.

\begin{lemma}\label{L2}
Let $K \subseteq \R^d$ be a compact set. Then,
\begin{itemize}
  \item [(i)] there exists some $f \in \Cc(\R^d)$ such that, for all $\mu \in \cM_{pd}(\R^d)$, we have
  \begin{displaymath}
\left| \widehat{\mu} \right| (K) \leqslant \mu(f) \,.
  \end{displaymath}
  \item [(ii)] there exists some compact $W \subseteq \R^d$ and some $C >0$ such that, for all $\mu \in \cM_{pd}(\R^d)$, we have
  \begin{displaymath}
\| \widehat{\mu} \|_K \leqslant C \left|\mu\right|(W) \,.
  \end{displaymath}
\end{itemize}
\end{lemma}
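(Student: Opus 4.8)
The plan is to read both statements off the defining identity $\langle \mu, g*\widetilde g\rangle = \langle \widehat\mu, |\reallywidecheck g|^2\rangle$ for one well-chosen $g\in\Cc(\R^d)$, using the classical fact that a positive definite measure $\mu$ is Fourier transformable and that $\widehat\mu$ is a \emph{positive} measure (so $|\widehat\mu|=\widehat\mu$, and the $\widehat\mu$-mass of a Borel set is dominated by the $\widehat\mu$-integral of any pointwise larger nonnegative function). The first, and essentially only nontrivial, step is to produce $g\in\Cc(\R^d)$ with $|\reallywidecheck g|^2\geqslant 1_K$. I would take any $\phi\in\Cc(\R^d)$ with $\int_{\R^d}\phi=1$, so that $\reallywidecheck\phi$ is continuous with $\reallywidecheck\phi(0)=1$; fix $\delta>0$ with $|\reallywidecheck\phi|\geqslant\frac12$ on the closed ball $\overline{B_\delta(0)}$, fix $R>0$ with $K\subseteq\overline{B_R(0)}$, and set $g:=2\lambda^d\,\phi(\lambda\bullet)$ with $\lambda:=R/\delta$. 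A one-line scaling computation gives $\reallywidecheck g=2\,\reallywidecheck\phi(\bullet/\lambda)$, and for $x\in K$ one has $|x/\lambda|\leqslant\delta$, hence $|\reallywidecheck g(x)|\geqslant 1$.

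For \textbf{(i)} I would apply the definition of $\widehat\mu$ to this $g$: since $\reallywidecheck g\in L^2(\widehat\mu)$, $\widehat\mu\geqslant 0$, and $|\reallywidecheck g|^2\geqslant 1_K$,
\[
|\widehat\mu|(K)=\widehat\mu(K)\leqslant\int_{\R^d}|\reallywidecheck g|^2\,\dd\widehat\mu=\langle\widehat\mu,|\reallywidecheck g|^2\rangle=\mu\big(g*\widetilde g\big)\,,
\]
so $f:=g*\widetilde g\in\Cc(\R^d)$ is the required function. For \textbf{(ii)}, the idea is to reduce an arbitrary translate $t+K$ to the already handled case $K$ by modulating $g$: set $u_t:=\e^{-2\pi\im t\bullet}g\in\Cc(\R^d)$. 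Then $\supp(u_t)=\supp(g)$, and a short computation with the Fourier conventions gives $\reallywidecheck{u_t}=\reallywidecheck g(\bullet-t)$ and $u_t*\widetilde{u_t}=\e^{-2\pi\im t\bullet}(g*\widetilde g)=\e^{-2\pi\im t\bullet}f$, whence $|\reallywidecheck{u_t}|^2=|\reallywidecheck g(\bullet-t)|^2\geqslant 1_{t+K}$. Applying the definition of $\widehat\mu$ to $u_t$ and arguing as in (i),
\[
\widehat\mu(t+K)\leqslant\langle\widehat\mu,|\reallywidecheck{u_t}|^2\rangle=\mu\big(\e^{-2\pi\im t\bullet}f\big)\,;
\]
since this last quantity equals $\langle\widehat\mu,|\reallywidecheck{u_t}|^2\rangle\geqslant 0$, it coincides with its own modulus and is therefore bounded by $\int_{\R^d}|f|\,\dd|\mu|\leqslant\|f\|_\infty\,|\mu|(\supp f)$, a bound independent of $t$. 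Taking the supremum over $t\in\R^d$ then yields $\|\widehat\mu\|_K\leqslant C\,|\mu|(W)$ with $W:=\supp(f)$ and $C:=\|f\|_\infty$.

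I expect the main obstacles to be minor ones of bookkeeping: keeping the Fourier sign conventions straight so that the modulation identities $\reallywidecheck{u_t}=\reallywidecheck g(\bullet-t)$ and $u_t*\widetilde{u_t}=\e^{-2\pi\im t\bullet}f$ come out correctly, and remembering to invoke that positive definiteness forces $\widehat\mu\geqslant 0$, which is precisely what legitimizes the two monotonicity estimates and the identity $|\widehat\mu|=\widehat\mu$. Everything after the construction of $g$ in the first step is a direct computation.
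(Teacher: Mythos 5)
Your proof is correct and follows essentially the same route as the paper: dominate $1_K$ (respectively $1_{t+K}$, via modulation) by the transform of a compactly supported test function, and use the positivity of $\widehat{\mu}$ together with the defining identity for Fourier transformability. The only difference is cosmetic: the paper cites the existence of an $f\in\Cc(\R^d)$ with $\reallywidecheck{f}\geqslant 1_K$ and uses the $KL(\R^d)$-characterization, whereas you build the dominating function explicitly as $|\reallywidecheck{g}|^2$ with $f=g*\widetilde{g}$ and invoke the $L^2$-form of the definition, which makes the argument self-contained.
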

\begin{proof}
(i) By \cite{BF,MoSt}, there exists some $f \in \Cc(\R^d)$ such that $\reallywidecheck{f} \geqslant 1_K$. Now, using the fact that $\mu$ is Fourier transformable and the positivity of $\widehat{\mu}$, we have
\begin{displaymath}
\left| \widehat{\mu} \right| (K)=  \widehat{\mu} (K) \leqslant \widehat{\mu}(\reallywidecheck{f})=\mu(f) \leqslant \left| \mu(f) \right| \,.
\end{displaymath}

\noindent (ii) Let $f$ be as in (i). Let $W =\supp(f)$ and $C:= \| f\|_\infty$. We show that these satisfy the required condition. Indeed, for all $\mu \in \cM_{pp}(\R^d)$, we have
\begin{align*}
\| \widehat{\mu} \|_K
    & = \sup_{y  \in \R^d} \widehat{\mu}(T_y  K)\leqslant \sup_{y
       \in \R^d} \widehat{\mu}(T_y  \reallywidecheck{f})   \\
    &= \sup_{y  \in \R^d} \widehat{\mu}(\reallywidecheck{\e^{-2\pi\im
       y\bullet} f})=
       \sup_{y \in \R^d} \mu(\e^{-2\pi\im
       y\bullet}  f) = \sup_{y  \in \R^d}
       \int_{W} \e^{-2\pi\im
       yx}\, f(x)\ \dd \mu(x)  \\
    &\leqslant \sup_{y  \in \R^d} \int_{W} \left| \e^{-2\pi\im
       y x}\, f(x) \right| \ \dd \left| \mu\right|(x)
     \leqslant C\, \left|  \mu \right| (W) \,.
\end{align*}
\end{proof}

\smallskip

\begin{remark}\label{R1}
Lemma~\ref{L2} says that the Fourier transform
\begin{displaymath}
\widehat{}\, : (\cM_{\text{pd}}(\R^d), | \cdot |_W ) \to (\cM^\infty(\R^d), \| \cdot \|_K)
\end{displaymath}
is bounded, where $| \mu |_W := | \mu|(W)$.

Since $\cM_{\text{pd}}(\R^d)$ is not a vector space, this doesn't necessarily imply continuity, and the map above is actually not continuous. Indeed, if $\gamma$ is the autocorrelation of the Fibonacci model set, and $\{\gamma_n\, :\, n\in\N\}$ are the finite approximants, then $\gamma_n \to \gamma$ in the vague topology. Since $\gamma_n, \gamma$ are supported inside a common Delone set, it follows immediately that $| \gamma_n -\gamma |_W \to 0$.
But, since all $\widehat{\gamma_n}$ are absolutely continuous and $\widehat{\gamma}$ is pure point, we have
\[
\| \widehat{\gamma_n} -\widehat{\gamma} \|_K=\| \widehat{\gamma_n}\|_K+\|\widehat{\gamma} \|_K \geqslant \|\widehat{\gamma} \|_K >0 \,.
\]
This shows that $(\widehat{\gamma_n})_{n\in\N}$ does not converge to $\widehat{\gamma}$ in $ (\cM^\infty(\R^d), \| \cdot \|_K)$.
\end{remark}

\smallskip

While Remark~\ref{R1} emphasizes some of the issues around the continuity of the Fourier transform, Lemma~\ref{L2} can be used to show that vague convergence of positive definite measures implies equi translation boundedness of their Fourier transforms, which we saw earlier in the paper, suffices to deduce the vague convergence of the Fourier transforms.

Let us start with the following simple consequence.

\begin{coro}\label{cor1}
If $\cA \subseteq \cM_{pd}(\R^d)$ is vaguely bounded, then $\widehat{\cA}:=\{ \widehat{\mu} : \mu \in \cA \}$ is equi translation bounded.
\end{coro}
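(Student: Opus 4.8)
The plan is to feed the pointwise bound of Lemma~\ref{L2}(ii) into the characterisation of vague boundedness from Proposition~\ref{P2}. Concretely, fix an arbitrary compact set $K \subseteq \R^d$. By Lemma~\ref{L2}(ii), there exist a compact set $W=W(K) \subseteq \R^d$ and a constant $C=C(K)>0$ such that
\[
\| \widehat{\mu} \|_K \leqslant C\, |\mu|(W)
\]
holds \emph{simultaneously} for every $\mu \in \cM_{pd}(\R^d)$; note that $W$ and $C$ depend only on $K$ and not on $\mu$, which is exactly what makes the argument uniform over $\cA$. (Here one uses implicitly that every positive definite measure is Fourier transformable and that its Fourier transform is a positive measure, so that $|\widehat\mu|=\widehat\mu$; this is what Lemma~\ref{L2} rests on.)

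Now invoke the hypothesis that $\cA$ is vaguely bounded. By the equivalence of items~(ii) and~(iii) in Proposition~\ref{P2}, vague boundedness of $\cA$ means precisely that for \emph{each} compact set---in particular for $W$---the set $\{\,|\mu|(W) : \mu \in \cA\,\}$ is bounded, say by $M=M(W)<\infty$. Combining the two displays gives
\[
\sup_{\mu \in \cA} \| \widehat{\mu} \|_K \;\leqslant\; C\, M \;<\; \infty \,.
\]
Since $K\subseteq\R^d$ was an arbitrary compact set, this says exactly that $\widehat{\cA}$ is equi translation bounded, which is the claim.

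There is essentially no obstacle here: the statement is a direct corollary, and the only thing one must be slightly careful about is tracking that the compact set $W$ and constant $C$ produced by Lemma~\ref{L2}(ii) are uniform in $\mu$ (they are, by the way that lemma is stated), so that the single bound $M$ on $\{|\mu|(W):\mu\in\cA\}$ obtained from Proposition~\ref{P2} suffices for all of $\cA$ at once. If one wishes, one can also note that by the remark following the definition of translation boundedness it would even be enough to verify the bound for one fixed compact $K$, but running the argument for every $K$ costs nothing extra.
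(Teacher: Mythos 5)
Your proof is correct and follows exactly the route the paper intends: the paper states this as a ``simple consequence'' of Lemma~\ref{L2}(ii) without writing out the argument, and your combination of the uniform bound $\| \widehat{\mu} \|_K \leqslant C\,|\mu|(W)$ with the equivalence of (ii) and (iii) in Proposition~\ref{P2} is precisely that omitted argument. Nothing to add.
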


Corollary~\ref{coro:hilfe1} and Corollary~\ref{cor1} imply Proposition~\ref{prop:pd_vauge}.

\medskip

Next, we will state similar statements for the product and norm topology.

\begin{theorem}
Let $(\mu_{n})_{n\in\N}$ be a sequence of positive definite measures on $\R^d$, and let $\mu$ be a measure on $\R^d$ such that
\begin{enumerate}
\item[(i)] $\limn \mu_n(\e^{-2\pi\im t\bullet}f) = \mu(\e^{-2\pi\im t\bullet}f) \text{ uniformly in } t\in\R^d$, for all $f\in\cS(\R^d)$,
\item[(ii)] $\{\mu_n\, :\, n\in\N\}$ is vaguely bounded.
\end{enumerate}
Then, $\mu$ is positive definite and, in the product topology, we have
\[
\limn \widehat{\mu_{n}} = \widehat{\mu} \,.
\]
\end{theorem}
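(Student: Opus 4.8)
The plan is to reduce the statement to Theorem~\ref{thm:prodtop}, whose four hypotheses I will verify one at a time. Hypothesis~(i) of Theorem~\ref{thm:prodtop} (property~(I) in the notation of Lemma~\ref{lem:A}) is exactly hypothesis~(i) here, and hypothesis~(ii) of Theorem~\ref{thm:prodtop} is exactly hypothesis~(ii) here, so those two are available for free.

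Next I would establish that $\mu$ is positive definite. By Lemma~\ref{lem:A}(a), property~(I) forces $(\mu_n)_{n\in\N}$ to converge vaguely to $\mu$. Since each $\mu_n$ is positive definite, Proposition~\ref{prop:pd_vauge} then yields that $\mu$ is positive definite (and, as a byproduct, that $\widehat{\mu_n}\to\widehat{\mu}$ vaguely). In particular $\mu$ is Fourier transformable, hence a tempered measure by \cite[Thm.~7.2]{ARMA1}; this supplies hypothesis~(iii) of Theorem~\ref{thm:prodtop}.

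For hypothesis~(iv) I would invoke Corollary~\ref{cor1}: the set $\{\mu_n\,:\,n\in\N\}$ is vaguely bounded by~(ii) and consists of positive definite measures, so $\widehat{\{\mu_n\,:\,n\in\N\}}$ is equi translation bounded. With all four hypotheses of Theorem~\ref{thm:prodtop} in hand, that theorem gives that $\mu$ is Fourier transformable and that $\limn\widehat{\mu_n}=\widehat{\mu}$ in the product topology, which together with the positive definiteness of $\mu$ established above completes the argument.

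There is not really a serious obstacle here: the statement is essentially a repackaging of Corollary~\ref{cor1} (the positive definite substitute for the equi-translation-boundedness hypothesis) together with Theorem~\ref{thm:prodtop}. The only point needing a little care is the \emph{order} of the deductions — one must first extract vague convergence from property~(I) via Lemma~\ref{lem:A}(a) in order to apply Proposition~\ref{prop:pd_vauge} and conclude that the vague limit $\mu$ is itself positive definite (hence tempered and Fourier transformable), before feeding everything into Theorem~\ref{thm:prodtop}.
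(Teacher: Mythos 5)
Your proposal is correct and takes essentially the same route as the paper: the paper's proof likewise applies Proposition~\ref{prop:pd_vauge} (after extracting vague convergence of $(\mu_n)_{n\in\N}$ from (i) and (ii)) to get that $\mu$ is positive definite and that $\widehat{\mu_n}\to\widehat{\mu}$ vaguely, and then obtains the product-topology upgrade from the argument of Theorem~\ref{thm:prodtop}. The only cosmetic difference is that you verify all four hypotheses of Theorem~\ref{thm:prodtop} (using Corollary~\ref{cor1} for the equi translation boundedness of the $\widehat{\mu_n}$) and cite it wholesale, whereas the paper merely reuses the final computation from that proof; your detour through Corollary~\ref{cor1} is consistent with the paper's own observation that Corollary~\ref{coro:hilfe1} and Corollary~\ref{cor1} together yield Proposition~\ref{prop:pd_vauge}.
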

\begin{proof}
By Proposition~\ref{prop:pd_vauge}, $\mu$ is positive definite, and $(\widehat{\mu_{n}})_{n\in\N}$ converges vaguely to $\widehat{\mu}$. The rest follows as shown in the proof of Theorem~\ref{thm:prodtop}.
\end{proof}

\begin{theorem}
Let $(\mu_{n})_{n\in\N}$ be a sequence of positive definite measures on $\R^d$ which converges vaguely to a measure $\mu$. If
\[
\limn \mu_n\big(\e^{-2\pi \im t\bullet}\, \widehat{f}\big) = \mu\big(\e^{-2\pi\im t\bullet}\, \widehat{f}\big)  \qquad \text{ uniformly in } (t,f) \in\R^d\times \mathcal{F}_U^{\infty}(\R^d) \,,
\]
then $\mu$ is Fourier transformable, and we have
\[
\limn \widehat{\mu_n} = \widehat{\mu}
\]
in the norm topology.
\end{theorem}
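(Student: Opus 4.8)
The plan is to obtain the statement as a direct consequence of Theorem~\ref{thm:norm_conv}; the role of positive definiteness is only to supply the two ingredients of that theorem which are not assumed here verbatim.

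First I would record that each $\mu_n$ is Fourier transformable, since positive definite measures always are; thus $(\mu_n)_{n\in\N}$ is a sequence of Fourier transformable measures, as Theorem~\ref{thm:norm_conv} requires. Next, by Proposition~\ref{prop:pd_vauge}, the vague limit $\mu$ of the positive definite sequence $(\mu_n)_{n\in\N}$ is again positive definite; in particular $\mu$ is Fourier transformable and hence, by \cite[Thm.~7.2]{ARMA1}, a tempered measure. This establishes hypothesis (ii) of Theorem~\ref{thm:norm_conv}.

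The two remaining hypotheses of Theorem~\ref{thm:norm_conv} are immediate: hypothesis (i) is the assumed vague convergence $\mu_n\to\mu$, and hypothesis (iii) is exactly the stated uniform convergence of $\left(\mu_n(\e^{-2\pi\im t\bullet}\widehat{f})\right)_{n\in\N}$ to $\mu(\e^{-2\pi\im t\bullet}\widehat{f})$ over $(t,f)\in\R^d\times\mathcal{F}_U^{\infty}(\R^d)$. Invoking Theorem~\ref{thm:norm_conv} then gives that $\mu$ is Fourier transformable and that $\limn\widehat{\mu_n}=\widehat{\mu}$ in the norm topology, which is the assertion.

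There is no genuine difficulty here: the entire substance sits inside Theorem~\ref{thm:norm_conv}, which in turn rests on the estimate from \cite[Cor.~3.2]{SpSt} used there to convert hypothesis (iii) into equi translation boundedness of $\{\widehat{\mu_n}\,:\,n\in\N\}$, and then on Theorem~\ref{T3}. The only point worth stating with care is the chain ``positive definite $\Rightarrow$ Fourier transformable $\Rightarrow$ tempered'', which is what lets us discharge hypothesis (ii); alternatively, one may note that vague convergence makes $\{\mu_n\,:\,n\in\N\}$ vaguely bounded, so that Corollary~\ref{cor1} already exhibits $\{\widehat{\mu_n}\,:\,n\in\N\}$ as equi translation bounded directly.
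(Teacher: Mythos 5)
Your proposal is correct and follows essentially the same route as the paper: the paper first invokes Proposition~\ref{prop:pd_vauge} to get positive definiteness of $\mu$ and vague convergence of the Fourier transforms, then finishes with the norm estimate from the proof of Theorem~\ref{thm:norm_conv}, whereas you simply verify the three hypotheses of Theorem~\ref{thm:norm_conv} and cite it as a black box (using positive definiteness only to discharge temperedness of $\mu$). The two arguments rest on exactly the same machinery, so no further comment is needed.
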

\begin{proof}
By Proposition~\ref{prop:pd_vauge}, $\mu$ is positive definite, and $(\widehat{\mu_{n}})_{n\in\N}$ converges vaguely to $\widehat{\mu}$. The rest follows as shown in the proof of Theorem~\ref{thm:norm_conv}.
\end{proof}

\section{Some open questions}

In this section, we will look at some natural questions related to the (dis)continuity of the Fourier transform.

On the space of Fourier transformable translation bounded measures, the Fourier transform $\nu_n:=\widehat{\mu_n}$ of a vague convergent sequence $\mu_n \to \mu$ is either vague convergent or has a subsequence $(\nu_{n_k})_{k\in\N}$ without vague convergent subsequences. In particular, this subsequence cannot have any vaguely bounded subsequence, and hence there exists a compact set $W$ such that
\begin{displaymath}
\lim_{k\to\infty} \left| \nu_{n_k} \right|(W) = \infty \,.
\end{displaymath}
In particular, for each precompact set $K$ with non-empty interior we must have
\begin{displaymath}
\lim_{k\to\infty} \| \nu_{n_k} \|_K = \infty \,.
\end{displaymath}

One natural question is what happens when we move away from translation bounded measures. If $\mu_n, \mu$ are Fourier transformable measures such that $\mu_n \to \mu$, is it possible for $\widehat{\mu_n}$ to have other cluster points than $\widehat{\mu}$?
By replacing the sequence by the subsequence $(\widehat{\mu_{n_k}})_{k\in\N}$ converging to one such different cluster point, and then defining $\nu_k:= \mu_{n_k}-\mu$, the question becomes equivalent to the following question.
\begin{question}
Does there exist a sequence $(\nu_n)_{n\in\N}$ of Fourier transformable measures and some measure $\nu \neq 0$ such that, in the vague topology, we have
\begin{align*}
  \lim_{n\to\infty} \nu_n & =0 \\
  \lim_{n\to\infty} \widehat{\nu_n} &= \mu \neq 0 \,.
\end{align*}
\end{question}

As discussed above, if such a sequence exists, then we must have $\lim_{n\to\infty} \| \widehat{\nu_n} \|_K =\infty$, by Corollary~\ref{T2}. Moreover, Theorem~\ref{thm:mainvag2} implies that, for each $C>0$, there exists some $N$ such that, for all $n>N$ we have $\| \nu_n \|_K  >C $. We could potentially have $\| \nu_n \|_K = \infty$.

\medskip

Recall that the mapping $\ \widehat{}\, : (\cM^\infty(\R^d), \| \cdot \|_K) \to \cS'$ is continuous. A natural question is then the following.

\begin{question}\label{Q2}
Is $\ \widehat{}\, : (\cM^\infty_T(\R^d), \| \cdot \|_K) \to (\cM^\infty(\R^d), \mbox{ vague topology})$ continuous?
\end{question}

We give below an equivalent formulation for Question~\ref{Q2}.

\begin{proposition}
Fix some compact set $K_0$ with non-empty interior. Then, the following statements are equivalent:
\begin{itemize}
\item [(i)] $\widehat{}\, : (\cM^\infty_T(\R^d), \| \cdot \|_{K_0}) \to (\cM^\infty(\R^d), \mbox{ vague topology})$ is continuous.
\item [(ii)] For each pair $K,W \subseteq \R^d$ of compact sets with non-empty interior there exists some $C=C(K,W)$ such that
\begin{displaymath}
\left| \widehat{\mu} \right|(W) \leqslant C \| \mu \|_K \qquad \text{ for all }\mu \in \cM^\infty_T(\R^d) \,.
\end{displaymath}
\item [(iii)] There exists some $C$ such that
\begin{displaymath}
| \widehat{\mu}|([0,1]^d) \leqslant C \| \mu \|_{[0,1]^d} \qquad \text{ for all }\mu \in \cM^\infty_T(\R^d) \,.
\end{displaymath}
\end{itemize}
\end{proposition}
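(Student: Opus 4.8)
The plan is to prove the cycle of implications (i)$\,\Rightarrow\,$(iii)$\,\Rightarrow\,$(ii)$\,\Rightarrow\,$(i). Two elementary observations will be used throughout. First, if $K_1,K_2\subseteq\R^d$ are compact with non-empty interior, then the norms $\|\cdot\|_{K_1}$ and $\|\cdot\|_{K_2}$ are equivalent on $\cM^\infty(\R^d)$: since $K_1^\circ\neq\emptyset$ and $K_2$ is compact, finitely many translates of $K_1$ cover $K_2$, whence $\|\mu\|_{K_2}\leqslant N\|\mu\|_{K_1}$, and symmetrically. Second, for $\mu\in\cM^\infty_T(\R^d)$ and any character $\chi$ of $\R^d$, the measure $\chi\mu$ again lies in $\cM^\infty_T(\R^d)$, with $|\chi\mu|=|\mu|$ (hence $\|\chi\mu\|_K=\|\mu\|_K$ for every compact $K$) and $\widehat{\chi\mu}$ a translate of $\widehat{\mu}$; conversely every translate of $\widehat{\mu}$ arises this way, so for each $s\in\R^d$ there is a character $\chi$ with $|\widehat{\mu}|(s+W)=|\widehat{\chi\mu}|(W)$ for all Borel sets $W$.

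For (i)$\,\Rightarrow\,$(iii) I would argue by the uniform boundedness principle, and the crucial preliminary point is that $(\cM^\infty_T(\R^d),\|\cdot\|_{K_0})$ is a Banach space. To see this, note that if $(\mu_n)_{n\in\N}$ is a $\|\cdot\|_{K_0}$-Cauchy sequence in $\cM^\infty_T(\R^d)$ with limit $\mu$ in the Banach space $(\cM^\infty(\R^d),\|\cdot\|_{K_0})$, then $(\mu_n)_{n\in\N}$ is equi translation bounded and converges vaguely to $\mu$, so $\mu\in\cM_T(\R^d)$ by Theorem~\ref{thm:mainvag2}; thus $\cM^\infty_T(\R^d)$ is closed, hence complete. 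Now, for each $g\in\Cc(\R^d)$ with $|g|\leqslant 1_{[0,1]^d}$, consider the linear functional $\Phi_g(\mu):=\widehat{\mu}(g)$ on this Banach space. Each $\Phi_g$ is bounded, being the composition of the Fourier transform (continuous by (i)) with evaluation at $g$ (continuous for the vague topology). The family $\{\Phi_g\}$ is pointwise bounded, since $\sup_g|\Phi_g(\mu)|=|\widehat{\mu}|([0,1]^d)<\infty$ for each fixed $\mu$. Banach--Steinhaus then yields a constant $C$ with $|\widehat{\mu}(g)|\leqslant C\,\|\mu\|_{K_0}$ for all such $g$ and all $\mu$; taking the supremum over $g$ and using the norm equivalence gives (iii).

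For (iii)$\,\Rightarrow\,$(ii), given compact sets $K,W$ with non-empty interior, I would cover $W$ by finitely many, say $m$, translates $s_1+[0,1]^d,\dots,s_m+[0,1]^d$, and for each of them use the second observation to write $|\widehat{\mu}|(s_i+[0,1]^d)=|\widehat{\chi_i\mu}|([0,1]^d)\leqslant C\|\chi_i\mu\|_{[0,1]^d}=C\|\mu\|_{[0,1]^d}$ by (iii); summing and applying the norm equivalence gives $|\widehat{\mu}|(W)\leqslant mCN\,\|\mu\|_K$, which is (ii). For (ii)$\,\Rightarrow\,$(i) I would first note that (ii) forces $\widehat{\mu}\in\cM^\infty(\R^d)$ for every $\mu\in\cM^\infty_T(\R^d)$ — indeed $\|\widehat{\mu}\|_W=\sup_s|\widehat{\mu}|(s+W)=\sup_s|\widehat{\chi\mu}|(W)\leqslant C(K_0,W)\,\|\mu\|_{K_0}<\infty$ for every compact $W$ with non-empty interior — so the map in (i) is well defined; continuity then reduces, by linearity and the definition of the vague topology, to bounding each functional $\mu\mapsto\widehat{\mu}(f)$, $f\in\Cc(\R^d)$, which follows from $|\widehat{\mu}(f)|\leqslant\|f\|_\infty\,|\widehat{\mu}|(W)\leqslant\|f\|_\infty\,C(K_0,W)\,\|\mu\|_{K_0}$ for any compact $W\supseteq\supp(f)$ with non-empty interior.

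I expect the main obstacle to be the implication (i)$\,\Rightarrow\,$(iii): converting the pointwise (in $f\in\Cc(\R^d)$) continuity afforded by (i) into a bound uniform over all $g$ with $|g|\leqslant 1_{[0,1]^d}$ genuinely requires a Baire-category input, and this is available only because Theorem~\ref{thm:mainvag2} guarantees that $\cM^\infty_T(\R^d)$ is norm-complete. The remaining implications are routine once the two preliminary observations and the modulation--translation duality are in place.
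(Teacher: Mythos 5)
Your implications (iii)$\Rightarrow$(ii) and (ii)$\Rightarrow$(i) are correct; in fact your use of the modulation--translation duality to reduce $|\widehat{\mu}|(s+[0,1]^d)$ to $|\widehat{\chi\mu}|([0,1]^d)$ with $\|\chi\mu\|_K=\|\mu\|_K$ makes explicit a step that the paper's one-line proof of (iii)$\Rightarrow$(ii) leaves implicit, and your direct verification of (ii)$\Rightarrow$(i) through the seminorm bounds $|\widehat{\mu}(f)|\leqslant\|f\|_\infty\,C(K_0,W)\,\|\mu\|_{K_0}$ is cleaner than the paper's detour through Theorem~\ref{T3}.

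The gap is in (i)$\Rightarrow$(iii), exactly at the point you yourself flag as crucial: the completeness of $(\cM^\infty_T(\R^d),\|\cdot\|_{K_0})$. Your argument for it misreads Theorem~\ref{thm:mainvag2}: there, the Fourier transformability of the vague limit $\mu$ is part of the \emph{equivalence} with the vague boundedness of $\{\widehat{\mu_n}\,:\,n\in\N\}$, not an unconditional conclusion from hypotheses (i)--(ii) alone; the proof of that theorem produces $\widehat{\mu}$ as a measure only by extracting a vague cluster point of $(\widehat{\mu_n})_{n\in\N}$, which requires the vague boundedness. Indeed, an unconditional reading would be false: for any $\mu\in\cM^\infty(\R^d)$ the truncations $\mu|_{[-n,n]^d}$ are finite (hence Fourier transformable), equi translation bounded, and converge vaguely to $\mu$, so every translation bounded measure would be Fourier transformable. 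For a merely $\|\cdot\|_{K_0}$-Cauchy sequence you have no control whatsoever on $\{\widehat{\mu_n}\}$ (if you did, you would be well on the way to answering Question~\ref{Q2}), so you have not shown that $\cM^\infty_T(\R^d)$ is closed in $\cM^\infty(\R^d)$, and without completeness (or at least barrelledness) Banach--Steinhaus is unavailable on the measure side. The repair is to relocate the Baire-category input to where completeness is known, namely the test-function side: arguing by contradiction, choose $\nu_n$ with $|\widehat{\nu_n}|([0,1]^d)>n\,\|\nu_n\|_{[0,1]^d}$ and rescale to $\mu_n:=\nu_n/(\sqrt{n}\,\|\nu_n\|_{[0,1]^d})$, so that $\|\mu_n\|_{[0,1]^d}=1/\sqrt{n}\to0$ while $|\widehat{\mu_n}|([0,1]^d)>\sqrt{n}$; by (i) the sequence $(\widehat{\mu_n})_{n\in\N}$ is vaguely convergent, hence vaguely bounded, and Proposition~\ref{P2} (whose uniform boundedness argument runs on the Banach spaces of continuous functions supported in a fixed compact set) bounds $|\widehat{\mu_n}|([0,1]^d)$ uniformly --- a contradiction. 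This uses only the homogeneity of the norm and continuity at $0$, and is how the paper proceeds.
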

\begin{proof} (i)$\implies$(iii)
Assume by contradiction that this is not true. Then, for each $n \in \N$, there exists some $\nu_n \in \cM^\infty_T(\R^d)$ such that
\begin{displaymath}
\left| \widehat{\nu_n} \right|([0,1]^d) > n\, \| \nu_n \|_{[0,1]^d} \,.
\end{displaymath}
Note here that $\widehat{\nu_n} \neq 0$ and hence $\| \nu_n \|_{[0,1]^d} \neq 0$.

Define
\begin{displaymath}
\mu_n:= \frac{1}{\sqrt{n}\,  \| \nu_n \|_{[0,1]^d}}\,  \nu_n
\end{displaymath}
Then, $\| \mu_n \|_{[0,1]^d} =\frac{1}{\sqrt{n}}$ and hence $\mu_n \to 0$ in $(\cM^\infty_T(\R^d), \| \cdot \|_{K_0})$, since $[0,1]^d$ and $K_0$ define equivalent norms \cite{BL,SpSt}.
By (i) it follows that $\widehat{\mu_n}$ is vaguely convergent to 0, and hence vaguely bounded. Therefore, by Proposition~\ref{P2} the set $\{ \left| \widehat{\mu_n} \right|([0,1]^d)\, :\, n\in\N \}$ is bounded. But this is not possible, as
\begin{displaymath}
\left| \widehat{\mu_n} \right|([0,1]^d)=\frac{1}{\sqrt{n}\,  \| \nu_n \|_{[0,1]^d} }\, \left| \widehat{\nu_n} \right|([0,1]^d) > \frac{1}{\sqrt{n}\,  \| \nu_n \|_{[0,1]^d}}\, n\, \| \nu_n \|_{[0,1]^d} =\sqrt{n} \,.
\end{displaymath}

\noindent (iii)$\implies$(ii) Follows immediately from the fact that both $K,W$ can be covered by finitely many translates of $[0,1]^d$.

\noindent (ii)$\implies$(i) Since the Fourier transform is a linear operator, it suffices to show continuity at $0$. Let $(\mu_n)_{n\in\N}$ be an arbitrary sequence which converges to $0$ in $\| \cdot \|_{K_0}$. We need to show that $\widehat{\mu_n} \to 0$.

Let $W \subseteq \R^d$ be an arbitrary compact set. By (ii) the sequence $\left| \widehat{\mu_n} \right|(W)\to0$ and hence, it is bounded. Proposition~\ref{P2} then implies that $\{ \widehat{\mu_n}\, :\,n\in \N \}$ is vaguely pre-compact.
It follows that
\begin{itemize}
\item{} $\mu_n \to 0$ in the vague topology.
\item{} $\{ \mu_n : n \in \N \}$ are equi translation bounded.
\item{}  $\{ \widehat{\mu_n}\, :\, n\in\N \}$ is vaguely pre-compact.
\end{itemize}
Theorem~\ref{T3} implies that $\widehat{\mu_n} \to 0$ as claimed.
\end{proof}

\appendix
\section{The vague topology on arbitrary LCAG}\label{sect:app}

Recall that given a Banach space $(B, \| \cdot \|)$ its dual space $B^*$ becomes a Banach space with the norm
\begin{displaymath}
\| f \|:= \sup \{ |f(x)|\, :\, x \in B,\, \|x \| \leqslant 1 \}
\end{displaymath}
Let us first note the following well known result on the weak*-topology.

\begin{proposition}\label{P1}
Let $B$ be a Banach space and let $A \subseteq B^*$ be any set. Then, the following statements are equivalent:
\begin{itemize}
\item [(i)] The weak*-closure of $A$ is weak*-compact.
\item [(ii)] $A$ is weak*-bounded, that is, for each $x \in B$, the set $\{ f(x) : f \in A \}$ is bounded.
\item [(iii)] $A$ is norm bounded.
\end{itemize}
\end{proposition}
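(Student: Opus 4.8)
The plan is to establish the cycle of implications (iii)~$\Longrightarrow$~(i)~$\Longrightarrow$~(ii)~$\Longrightarrow$~(iii), which bundles together two classical facts: the Banach--Alaoglu theorem and the uniform boundedness principle (Banach--Steinhaus).

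First I would prove (iii)~$\Longrightarrow$~(i). If $A$ is norm bounded, say $\|f\|\leqslant R$ for all $f\in A$, then $A$ is contained in the scaled ball $R\, B^*_1$, where $B^*_1:=\{g\in B^*:\|g\|\leqslant 1\}$. By Banach--Alaoglu, $B^*_1$ is weak*-compact, hence so is $R\, B^*_1$. The weak*-closure of $A$ is a weak*-closed subset of the weak*-compact set $R\, B^*_1$, and a closed subset of a compact space is compact; therefore the weak*-closure of $A$ is weak*-compact. The one point worth stating carefully here is precisely this last observation.

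Next, (i)~$\Longrightarrow$~(ii) is essentially immediate. Fix $x\in B$ and consider the evaluation functional $f\mapsto f(x)$ on $B^*$; by the very definition of the weak*-topology it is weak*-continuous. Hence its restriction to the weak*-compact set $\overline{A}$ (the weak*-closure of $A$) has compact, in particular bounded, image in $\CC$. Since $\{f(x):f\in A\}\subseteq\{f(x):f\in\overline{A}\}$, the set $\{f(x):f\in A\}$ is bounded, and as $x$ was arbitrary, $A$ is weak*-bounded.

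Finally, for (ii)~$\Longrightarrow$~(iii) I would view $A$ as a family of bounded linear maps from the Banach space $B$ to $\CC$. Condition (ii) says exactly that $\sup_{f\in A}|f(x)|<\infty$ for every $x\in B$, i.e.\ the family is pointwise bounded; the uniform boundedness principle then yields $\sup_{f\in A}\|f\|<\infty$, which is (iii). The only genuinely nontrivial inputs are Banach--Alaoglu and Banach--Steinhaus, both standard, so I do not anticipate any real obstacle beyond being careful that a weak*-closed subset of a weak*-compact set is itself weak*-compact.
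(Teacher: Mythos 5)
Your proposal is correct and uses exactly the same ingredients as the paper's proof: the uniform boundedness principle for (ii)$\implies$(iii), Banach--Alaoglu for (iii)$\implies$(i), and the ``compact implies bounded'' evaluation-functional argument for (i)$\implies$(ii). The only cosmetic difference is that you arrange the implications as a single cycle while the paper states (ii)$\iff$(iii) separately; the substance is identical.
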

\begin{proof}
(ii)$\iff$(iii) This is a direct consequence of the uniform bounded principle.

\smallskip

\noindent (iii)$\implies$(i) This follows from the Banach Alaoglu theorem.

\smallskip

\noindent (i)$\implies$(ii) This is the standard ``compact implies bounded" argument.
\end{proof}

Recall that, for each fixed compact set $K$, the set
\begin{displaymath}
C(G:K):= \{ f \in \Cc(G)\, :\, \supp(f) \subseteq K \}
\end{displaymath}
is a Banach space. By the Riesz representation theorem, the dual space of this space can be identified with the space $\cM(K)$ of Radon measures supported on $K$.

The following is a simple computation (compare \cite{SpSt}).

\begin{lemma}
The dual norm of the duality $ (\Cc(G:K), \| \cdot \|_\infty)^* = \cM(K)$ is given by
\begin{displaymath}
\| \mu \|:= \left| \mu \right|(K) \,.
\end{displaymath}
\end{lemma}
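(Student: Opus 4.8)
The plan is to identify the dual norm $\|\mu\|=\sup\{\,|\mu(f)|:f\in C(G:K),\ \|f\|_{\infty}\le 1\,\}$ of an element $\mu\in\cM(K)$ with its total variation $|\mu|(K)$, by establishing the two inequalities separately. Both directions will use only the defining formula for the total variation measure recalled in Section~2, namely $|\mu|(g)=\sup\{\,|\mu(h)|:h\in\Cc(G),\ |h|\le g\,\}$ for $g\in\Cc(G)$ with $g\ge 0$, together with the fact that $|\mu|$ is a positive Radon measure which is finite because $K$ is compact.

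For the bound $\|\mu\|\le|\mu|(K)$: given $f\in C(G:K)$ with $\|f\|_{\infty}\le 1$, I would apply the formula above with $g:=|f|\in\Cc(G)$ and $h:=f$ to get $|\mu(f)|\le|\mu|(|f|)=\int_{G}|f|\,\dd|\mu|$, and then note that $f$ being supported in $K$ and bounded by $1$ forces $|f|\le 1_{K}$ pointwise, so that $\int_{G}|f|\,\dd|\mu|\le|\mu|(K)$. Taking the supremum over all admissible $f$ gives this half, which is routine.

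For the reverse inequality $|\mu|(K)\le\|\mu\|$, I would restrict the supremum in the total variation formula to nonnegative $g\in\Cc(G)$ with $g\le 1$ and $\supp g\subseteq K$. Any $h$ competing in $|\mu|(g)$ then satisfies $|h|\le g\le 1$ and $\supp h\subseteq\supp g\subseteq K$, hence $h$ lies in the unit ball of $C(G:K)$ and $|\mu(h)|\le\|\mu\|$; passing to the supremum over such $h$ yields $|\mu|(g)\le\|\mu\|$ for each such $g$. It then remains to produce $g$'s with $|\mu|(g)$ arbitrarily close to $|\mu|(K)$: using that $K$ is the closure of its interior (the standing convention on the compact sets in this paper) and the inner regularity of $|\mu|$, I would choose compact sets $C_{n}\subseteq K^{\circ}$ with $|\mu|(C_{n})\to|\mu|(K)$ and, by Urysohn's lemma, functions $g_{n}\in\Cc(G)$ with $1_{C_{n}}\le g_{n}\le 1$ and $\supp g_{n}\subseteq K$; then $|\mu|(g_{n})\ge|\mu|(C_{n})\to|\mu|(K)$, which forces $\|\mu\|\ge|\mu|(K)$ and completes the argument.

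The step I expect to be the real obstacle is this last approximation, i.e.\ exhibiting continuous functions supported \emph{inside} $K$ that still capture the full mass $|\mu|(K)$: every $f\in C(G:K)$ vanishes on $\partial K$, so a priori the dual norm only sees $|\mu|$ on the interior $K^{\circ}$, and it is exactly the regularity hypothesis $K=\overline{K^{\circ}}$ together with the inner regularity of the Radon measure $|\mu|$ that bridges this gap. Everything else is a direct consequence of the definition of $|\mu|$ and the standard Urysohn machinery available on any LCAG.
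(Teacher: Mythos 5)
Your first inequality is correct, but the second one has a genuine gap, and it sits exactly at the step you yourself flagged as the obstacle. Inner regularity of the Radon measure $\left|\mu\right|$ applied to the open set $K^\circ$ produces compact sets $C_n\subseteq K^\circ$ with $\left|\mu\right|(C_n)\to\left|\mu\right|(K^\circ)$, \emph{not} $\left|\mu\right|(K)$. The hypothesis $K=\overline{K^\circ}$ only says that $K^\circ$ is dense in $K$; it does not force $\left|\mu\right|(\partial K)=0$, so the two quantities can differ. Concretely, take $G=\R$, $K=[0,1]$ and $\mu=\delta_0$: every $f\in C(G:K)$ vanishes at $0$ (otherwise $f$ would be nonzero on a neighbourhood of $0$, which necessarily meets $\R\setminus[0,1]$, contradicting $\supp(f)\subseteq[0,1]$), so $\mu(f)=0$ for all admissible $f$ and the dual-norm side equals $0$, while $\left|\mu\right|(K)=1$. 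The same observation shows that every $f\in C(G:K)$ vanishes on all of $\partial K$, so your first estimate actually sharpens to $|\mu(f)|\leqslant\left|\mu\right|(K^\circ)$; combined with the lower bound your Urysohn construction genuinely delivers, what your argument proves is the identity $\|\mu\|=\left|\mu\right|(K^\circ)$, which agrees with the printed statement only when $\left|\mu\right|(\partial K)=0$.

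This is not a defect you introduced: the lemma as printed can only be read modulo this boundary issue (indeed the map $\cM(K)\to C(G:K)^*$ is not even injective, as $\delta_0$ and the zero measure induce the same functional for $K=[0,1]$), and the paper offers no argument of its own, deferring to \cite{SpSt} as a ``simple computation''. The way the lemma is actually exploited, e.g.\ in the proof of Proposition~\ref{P2A}, is through the two-sided comparison: one enlarges $K$ to a compact $W$ with $K\subseteq W^\circ$ and uses only $\left|\mu\right|(K)\leqslant\sup\{|\mu(g)|\,:\,g\in C(G:W),\ \|g\|_\infty\leqslant1\}\leqslant\left|\mu\right|(W)$, and both of these inequalities do follow from your construction once you run it inside $W^\circ\supseteq K$. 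So the repair is either to restate the lemma with $\left|\mu\right|(K^\circ)$ on the right-hand side, or to replace the exact identity by that sandwich; your proof cannot be patched to give $\left|\mu\right|(K)\leqslant\|\mu\|$ because that inequality is false in general.
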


Let us now extend the following definition to arbitrary LCAG.

\begin{definition} A set $\mathcal{A} \subseteq \cM(G)$ is called \emph{vaguely bounded} if, for each $f \in \Cc(G)$, the set $\{ \mu(f): \mu \in \mathcal{A} \}$ is bounded.
\end{definition}

We can now prove the following result, compare \cite{BL}.

\begin{proposition}\label{P2A}
Let $\mathcal{A} \subseteq \cM(G)$. Then, the following statements are equivalent:
\begin{itemize}
\item [(i)] $\mathcal{A}$ has compact vague closure.
\item [(ii)] $\mathcal{A}$ is vaguely bounded.
\item [(iii)] For each compact set $K \subseteq G$, the set $\{ \left| \mu \right| (K) : \mu  \in \mathcal{A} \}$ is bounded.
\item[(iv)] There exists a collection $\{ K_\alpha\, :\,  \alpha\}$ of compact sets in $G$ such that
\begin{itemize}
\item[$\bullet$] $G =\bigcup_{\alpha} (K_\alpha)^\circ$ and
\item[$\bullet$] $\{ |\mu| (K_\alpha) : \mu  \in \mathcal{A} \}$ is bounded, for each $\alpha$.
\end{itemize}
\end{itemize}
Moreover, if $G$ is second countable and the above hold, the vague topology is metrisable on $\cA$.
\end{proposition}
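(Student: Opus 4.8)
The plan is to reduce the proposition to the weak*-theory of the Banach spaces $\Cc(G:K)$, using the identification $(\Cc(G:K),\|\cdot\|_\infty)^*=\cM(K)$ with dual norm $\|\mu\|=|\mu|(K)$ recorded above, together with Proposition~\ref{P1}, and then to patch the local data over an exhausting family of compacta. First I would dispose of the easy implications. For (i)$\Rightarrow$(ii): each evaluation $\mu\mapsto\mu(f)$, $f\in\Cc(G)$, is vaguely continuous, so it maps the vaguely compact set $\overline{\mathcal{A}}$ onto a bounded subset of $\CC$. For (ii)$\Leftrightarrow$(iii): fixing a compact $K$ and restricting the test functions in (ii) to $\Cc(G:K)\subseteq\Cc(G)$, condition (ii) says exactly that $\{\,\mu|_{\Cc(G:K)}:\mu\in\mathcal{A}\,\}$ is weak*-bounded in $\cM(K)$; by Proposition~\ref{P1} this is equivalent to norm-boundedness of that family, i.e. to $\sup_{\mu\in\mathcal{A}}|\mu|(K)<\infty$ by the dual-norm lemma, and one lets $K$ vary. (A cosmetic point: since functions in $\Cc(G:K)$ vanish on $\partial K$, to compare the dual norm with $|\mu|(K)$ one should first pass to a compact $K'$ with $K\subseteq (K')^\circ$, which is harmless as $|\mu|(K)\le|\mu|(K')$.) Then (iii)$\Rightarrow$(iv) is immediate, taking any cover of $G$ by interiors of compacta; and (iv)$\Rightarrow$(iii) holds because a given compact $K$ is covered by finitely many $(K_{\alpha_j})^\circ$, whence $|\mu|(K)\le\sum_j|\mu|(K_{\alpha_j})$ is bounded uniformly over $\mathcal{A}$.

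The substantive implication is (iii)$\Rightarrow$(i). Put $C_K:=\sup_{\mu\in\mathcal{A}}|\mu|(K)$ for each compact $K$. Restriction defines a map $\Phi\colon\cM(G)\to\prod_{K}\cM(K)$, $\mu\mapsto(\mu|_{\Cc(G:K)})_K$, and since $\Cc(G)=\bigcup_K\Cc(G:K)$ this $\Phi$ is a homeomorphism onto its image once the target carries the product of the weak*-topologies. By (iii), $\Phi(\mathcal{A})$ lies inside $P:=\prod_K B_K$, where $B_K\subseteq\cM(K)$ is the weak*-closed ball of radius $C_K$; each $B_K$ is weak*-compact by Proposition~\ref{P1}, so $P$ is compact by Tychonoff. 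The subset $P_0\subseteq P$ of \emph{compatible} families $(\nu_K)_K$ (those satisfying $\nu_{K'}|_{\Cc(G:K)}=\nu_K$ whenever $K\subseteq K'$) is cut out by closed conditions, hence closed in $P$ and so compact; and $P_0=\Phi(\mathcal{N})$ for $\mathcal{N}:=\{\mu\in\cM(G):|\mu|(K)\le C_K\text{ for all compact }K\}$, because a compatible family patches to a linear functional on $\Cc(G)$ that is bounded by $C_K$ on each $\Cc(G:K)$, hence to a measure. Therefore $\mathcal{N}$ is vaguely compact; since the vague topology on $\cM(G)$ is Hausdorff, $\mathcal{N}$ is vaguely closed, and so $\overline{\mathcal{A}}^{\,\mathrm{vague}}\subseteq\mathcal{N}$ is a closed subset of a compact set, hence compact. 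I expect the main obstacle to sit here: treating $\cM(G)$ as the projective limit of the $\cM(K)$ and verifying that the compatible families in the product of balls are precisely $\Phi(\mathcal{N})$, with $\mathcal{N}$ vaguely closed, requires a little care with the various topologies.

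Finally, for the metrisability statement, assume $G$ is second countable and the equivalent conditions hold. Then $G$ is $\sigma$-compact, say $G=\bigcup_n U_n$ with $U_n$ open, precompact and increasing, and set $K_n:=\overline{U_n}$. Each $C(K_n)$ is separable, so one may fix a countable set $D\subseteq\Cc(G)$ containing a $\|\cdot\|_\infty$-dense subset of every $\Cc(G:K_n)$. Since $\{K_n^\circ\}$ exhausts the compacta, the map $T\colon\mathcal{A}\to\prod_{\phi\in D}\CC$, $\mu\mapsto(\mu(\phi))_{\phi\in D}$, is injective and vaguely continuous; and using the uniform bounds $\sup_{\mu\in\mathcal{A}}|\mu|(K_n)<\infty$ one shows, by the same three-$\varepsilon$ approximation argument as in the proof of the earlier characterisation of vague convergence, that $T$ is a homeomorphism onto its image. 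As $\prod_{\phi\in D}\CC$ is a countable product of metric spaces, it is metrisable, and hence so is $\mathcal{A}$.
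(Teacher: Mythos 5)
Your proposal is correct and follows essentially the same route as the paper: the easy implications are handled identically, (ii)$\Leftrightarrow$(iii) is the uniform boundedness principle on the Banach spaces $\Cc(G:K)$ (packaged here as Proposition~\ref{P1} plus the dual-norm lemma), and compactness is obtained by embedding $\cM(G)$ into a product of the weak*-compact balls of the $\cM(K)$ and invoking Tychonoff, with separability of the $\Cc(G:K_n)$ giving metrisability in the second countable case. The only noteworthy difference is that you spell out the projective-limit step more carefully than the paper does --- identifying the image of the embedding with the closed set of compatible families and checking that such a family patches to a measure --- which patches a point the paper's proof of (iv)$\Rightarrow$(i) passes over rather quickly.
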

\begin{proof}
(i)$\implies$(ii) Let $f \in \Cc(G)$ be arbitrary. The function $F : \cM(G) \to \CC$ defined by
\begin{displaymath}
F(\mu):= \mu(f)
\end{displaymath}
is continuous. Since $\overline{\cA}$ is compact, $F(\overline{\cA})$ is compact in $\CC$ and hence bounded. The claim follows.

\smallskip

\noindent (ii)$\implies$(iii) Let $K\subseteq G$ be compact and let $W$ be a compact set such that $K \subseteq W^\circ$.
Each $\mu$ defines an operator $\mu : \Cc(G:W) \to \CC$. By (ii), for each $f \in \Cc(G:W)$, the set $\{ \mu(f) : \mu \in \cA \}$ is bounded. Therefore, by the uniform bounded principle, there exists a constant $C>0$ such that, for all $f \in \Cc(G:W)$ with $\| f \|_\infty =1$ and for all $\mu \in \cA$, we have
\begin{displaymath}
\left| \mu (f) \right| \leqslant C \,.
\end{displaymath}
Therefore, for all $f \in \Cc(G:W)$ we have
\begin{displaymath}
\left| \mu (f) \right| \leqslant C \| f \|_\infty \,.
\end{displaymath}

Now, let $\mu \in \cA$. Since $K \subseteq W^\circ$, we can pick some $f \in \Cc(G)$ with $1_K \leqslant f \leqslant 1_W$. By definition of $\left| \mu \right|$, there exists some $g \in \Cc(G)$ with $|g| \leqslant f$ such that
\begin{displaymath}
\left| \mu \right| (f) \leqslant \left| \mu (g) \right| +1 \,.
\end{displaymath}
In particular, we have $g \in \Cc(G:W)$ and $\| g \|_\infty <1$. Therefore,
\begin{displaymath}
| \mu | (K) \leqslant | \mu | (f) \leqslant \left| \mu (g) \right| +1  \leqslant C \| g \|_\infty +1=C+1 \,.
\end{displaymath}
This shows that
\begin{displaymath}
\left| \mu \right| (K) \leqslant C+1 \qquad \text{ for all } \mu \in \cA \,.
\end{displaymath}

\smallskip

\noindent (iii)$\implies$(iv) Trivial.

\medskip

\noindent (iv)$\implies$(i) Define the mapping
\begin{displaymath}
j : \cM(G) \hookrightarrow \prod_{\alpha} \cM(K_\alpha) \,,\qquad j(\mu) = \left( \mu|_{K_\alpha} \right)_\alpha \,.
\end{displaymath}
It is clear that this mapping is continuous from the vague topology to the product topology, where each $\cM(K_\alpha)$ is equipped with the vague topology. Moreover, since $G =\bigcup_{\alpha} K_\alpha^\circ$, it follows immediately that $j$ is a homeomorphism onto its image.

For each $\alpha$ define
\begin{displaymath}
C_\alpha :=\sup \{ |\mu| (K_\alpha) : \mu  \in \mathcal{A} \}
\end{displaymath}
By Proposition~\ref{P1}, the set
\begin{displaymath}
M_\alpha:= \{ \nu \in \cM(K_\alpha) : \left| \nu \right|(K_\alpha) \leqslant C_\alpha \}
\end{displaymath}
is compact, and hence, so is $\prod_{\alpha} M_\alpha$.
Therefore, the set $\prod_{\alpha} M_\alpha \cap j( \cM(G) )$ has compact closure in $\prod_{\alpha} \cM(K_\alpha)$ and hence the set $j^{-1} (\prod_{\alpha} M_\alpha \cap j( \cM(G) ))= j^{-1}(\prod_{\alpha} M_\alpha)$ has compact closure in $\cM(G)$.
Now, by the definition of $C_\alpha$, we have $\cA \subseteq j^{-1}(\prod_{\alpha} M_\alpha)$. The claim follows.

\smallskip

Finally, if $G$ is second countable, we can find a sequence $(K_n)_{n\in\N}$ of compact sets such that $K_n \subseteq (K_{n+1})^\circ$ and $G= \bigcup_{n\in\N} K_n$. Note that in this case we have $G= \bigcup_{n\in\N} K_n \subseteq \bigcup_{n\in\N} K_{n+1}^\circ$ and hence $G= \bigcup_{n\in\N} K_n$.

Define as above
\begin{displaymath}
j : \cM(G) \hookrightarrow \prod_{n\in\N} \cM(K_n) \,, \qquad j(\mu) = \left( \mu|_{K_n} \right)_{n\in\N} \,.
\end{displaymath}
Let
\begin{displaymath}
C_n:=\sup \{ |\mu| (K_n) : \mu  \in \mathcal{A} \}
\end{displaymath}

Next, by the second countability of $G$, the space $C(G:K_n):= \{ f \in \Cc(G): \supp(f) \subseteq K_n \}$ is a separable Banach space. Therefore, by \cite[Thm.~3.16]{Rud2}, the weak-*-topology is metrisable on
\begin{displaymath}
M_n:= \{ \nu \in \cM(K_n) : \left| \nu \right|(K_n) \leqslant C_n \} \,,
\end{displaymath}
and hence, $\prod_{n\in\N} M_n$. As above, $j$ is a homeomorphism on its image, and hence $j^{-1}( \prod_{n\in\N} M_n)$ is metrisable.
\end{proof}

Let us note the following consequence, compare \cite[Thm.~2]{BL}.

\begin{proposition}\label{P3A}
Let $(U_\alpha)_{\alpha}$ be a collection of open precompact sets such that $G= \bigcup_\alpha U_\alpha$. Let $C_\alpha >0$ be constants, for all $\alpha$. Then, the space
\begin{displaymath}
\MM:= \{ \mu \in \cM(G)\, :\, |\mu| (U_\alpha) \leqslant C_\alpha \text{ for all } \alpha \}
\end{displaymath}
is vaguely compact. If $G$ is second countable, the vague topology is metrisable on $\MM$.
\end{proposition}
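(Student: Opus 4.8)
The plan is to establish vague compactness by showing that $\MM$ is both vaguely precompact and vaguely closed, and then to read off metrisability directly from Proposition~\ref{P2A}. The whole argument is meant to be soft, reducing everything to Proposition~\ref{P2A}, general topology, and one standard fact about total variations.

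First I would check that $\MM$ has compact vague closure via condition (iv) of Proposition~\ref{P2A}. For each $x\in G$ choose an index $\alpha(x)$ with $x\in U_{\alpha(x)}$; since $G$ is locally compact and $U_{\alpha(x)}$ is open, there is a compact neighbourhood $W_x$ of $x$ with $W_x\subseteq U_{\alpha(x)}$. The family $\{W_x:x\in G\}$ then satisfies $G=\bigcup_{x\in G}(W_x)^\circ$, and for every $\mu\in\MM$ we have $|\mu|(W_x)\leqslant|\mu|(U_{\alpha(x)})\leqslant C_{\alpha(x)}$, so $\{|\mu|(W_x):\mu\in\MM\}$ is bounded for each $x$. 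Proposition~\ref{P2A} then gives that $\MM$ is vaguely precompact.

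Next I would show $\MM$ is vaguely closed. The main point is the identity
\[
|\mu|(U_\alpha)=\sup\bigl\{|\mu(g)|:g\in\Cc(G),\ |g|\leqslant 1_{U_\alpha}\bigr\}
\]
valid for every measure $\mu$ and every open precompact $U_\alpha$. The inequality ``$\geqslant$'' is immediate from $|\mu(g)|\leqslant|\mu|(|g|)\leqslant|\mu|(U_\alpha)$; for ``$\leqslant$'' one combines inner regularity of the positive measure $|\mu|$ with Urysohn's lemma, picking for each compact $K\subseteq U_\alpha$ a function $h\in\Cc(G)$ with $1_K\leqslant h$ and $\supp(h)\subseteq U_\alpha$, and then applying the defining supremum formula for the total variation (recalled in the preliminaries) to the nonnegative function $h$. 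Granting this identity,
\[
\MM=\bigcap_{\alpha}\ \bigcap_{\substack{g\in\Cc(G)\\ |g|\leqslant 1_{U_\alpha}}}\bigl\{\mu\in\cM(G):|\mu(g)|\leqslant C_\alpha\bigr\},
\]
and each set in this intersection is the preimage of a closed subset of $\CC$ under the vaguely continuous evaluation map $\mu\mapsto\mu(g)$, hence vaguely closed; an intersection of vaguely closed sets is vaguely closed, so $\MM$ is vaguely closed. Being closed, $\MM$ equals its own vague closure, which is compact by the previous paragraph, so $\MM$ is vaguely compact. Finally, when $G$ is second countable, $\MM$ satisfies the equivalent conditions of Proposition~\ref{P2A}, and the ``moreover'' clause of that proposition yields metrisability of the vague topology on $\MM$.

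I expect the only genuinely non-soft ingredient to be the total-variation identity above, and within it the two points to handle carefully are: that the Urysohn function $h$ can be chosen with compact support contained in $U_\alpha$ (this uses local compactness of $G$), so that it is an admissible test function for the total-variation formula; and that the supremum formula for $|\mu|(f)$ is only invoked for the nonnegative function $f=h$, after which one enlarges the class of admissible $g$ from $|g|\leqslant h$ to $|g|\leqslant 1_{U_\alpha}$ using $f=|g|$ itself. Everything else is a routine combination of Proposition~\ref{P2A} with the fact that a closed subset of a vaguely precompact set is vaguely compact.
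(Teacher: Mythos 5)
Your proof is correct and follows essentially the same strategy as the paper's: vague precompactness via Proposition~\ref{P2A} combined with vague closedness of $\MM$. The only differences are in packaging --- you verify condition (iv) of Proposition~\ref{P2A} with compact neighbourhoods $W_x\subseteq U_{\alpha(x)}$ (which is in fact slightly more careful than the paper's choice $K_\alpha=\overline{U_\alpha}$, whose $|\mu|$-mass is not directly controlled by $C_\alpha$), and you establish closedness by exhibiting $\MM$ as an intersection of sets $\{\mu : |\mu(g)|\leqslant C_\alpha\}$ via the identity $|\mu|(U_\alpha)=\sup\{|\mu(g)| : g\in\Cc(G),\ |g|\leqslant 1_{U_\alpha}\}$, whereas the paper runs the same ingredients (the supremum formula for the total variation plus continuity of the evaluations $\mu\mapsto\mu(g)$) through a direct net-and-$\eps/3$ argument.
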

\begin{proof}
By Proposition~\ref{P2A}, applied with $K_\alpha = \overline{U_\alpha}$, the set $\MM$ has compact closure in $\cM(G)$. We show that $\MM$ is closed.

Let $(\mu_\beta)_{\beta}$ be a net in $\MM$ which converges to some $\mu \in \cM(G)$. We need to show that $\mu \in \MM$.
Fix some $\alpha$ and $\eps >0$.
Pick some $f \in \Cc(G)$ such that $0 \leqslant f \leqslant 1_{U_\alpha}$ and
\begin{displaymath}
|\mu| (f) \geqslant  |\mu| (U_\alpha) - \frac{\eps}{3} \,.
\end{displaymath}
Next, by the definition of $|\mu|$, there exists some $g \in \Cc(G)$ with $|g| \leqslant f$ and
\begin{displaymath}
|\mu (g)| \geqslant  |\mu| (f) - \frac{\eps}{3} \,.
\end{displaymath}
In particular, $g$ is zero outside $U_\alpha$.
Finally, by the vague convergence of $(\mu_\beta)_{\beta}$ to $\mu$, there exists some $\beta$ such that
\begin{displaymath}
\left| \mu_\beta(g)- \mu(g) \right| \leqslant \frac{\eps}{3} \,.
\end{displaymath}
Therefore, we have
\begin{align*}
|\mu| (U_\alpha) & \leqslant |\mu| (f) +\frac{\eps}{3} \leqslant |\mu (g)| +\frac{2\eps}{3}   \\
  &\leqslant  |\mu_\beta (g)| + \eps \leqslant  \left| \int_G g(x)\ \dd
    \mu_\beta(x) \right| +\eps = \left| \int_{U_\alpha} g(x)\ \dd
    \mu_\beta(x) \right| +\eps \\
  &\leqslant \| g \|_\infty |\mu_\beta| (U_\alpha) +\eps \leqslant 1 \cdot
     C_\alpha +\eps = C_\alpha +\eps \,.
\end{align*}
This shows that, for all $\eps >0$, we have
\begin{displaymath}
|\mu| (U_\alpha) \leqslant C_\alpha +\eps \,.
\end{displaymath}
The claim follows.
\end{proof}

\begin{remark}
It follows from Proposition~\ref{P2} that a set $\mathcal{A}$ is vaguely precompact if and only if it is a subset of some $\MM$ as in Proposition~\ref{P3}.
\end{remark}

\begin{coro}\label{cor BL thm2} \cite[Thm.~2]{BL}
For each open precompact $U \subseteq G$ and each $C>0$, the space
\begin{displaymath}
\cM_{C,U}:= \{ \mu\in\cM(G)\, :\, \| \mu \|_K \leqslant C \}
\end{displaymath}
is vaguely compact. If $G$ is second countable, then the vague topology is metrisable on $\cM_{C,U}$.
\end{coro}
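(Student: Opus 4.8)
The plan is to read this corollary off directly from Proposition~\ref{P3A}. The only observation needed is that the inequality defining $\cM_{C,U}$, namely $\|\mu\|_U=\sup_{t\in G}|\mu|(t+U)\leqslant C$, is, \emph{by the definition of the supremum}, nothing but the conjunction of the inequalities $|\mu|(t+U)\leqslant C$ taken over all $t\in G$. So the whole point is to recognise $\cM_{C,U}$ as one of the spaces $\MM$ already treated in Proposition~\ref{P3A}.

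Concretely, I would first dispose of the trivial degenerate case: for the statement to be meaningful one must take $U\neq\emptyset$ (if $U=\emptyset$ then $\|\cdot\|_U\equiv 0$ and $\cM_{C,U}=\cM(G)$, which is not vaguely compact), so assume $U$ is open, precompact and nonempty. Then the translates $U_t:=t+U$, $t\in G$, form a family of open precompact subsets of $G$, and they cover $G$: if $u\in U$ then $x\in (x-u)+U=U_{x-u}$ for every $x\in G$. Applying Proposition~\ref{P3A} to this family, equipped with the constant functions $C_t\equiv C$, the space $\MM$ produced there is exactly
\[
\MM=\{\mu\in\cM(G)\,:\,|\mu|(U_t)\leqslant C\ \text{for all }t\in G\}=\cM_{C,U}\,,
\]
so Proposition~\ref{P3A} immediately gives that $\cM_{C,U}$ is vaguely compact, and that the vague topology on it is metrisable when $G$ is second countable. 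This is the entire assertion.

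I do not expect a genuine obstacle here: all the substance has been front-loaded into Proposition~\ref{P3A} (hence into Proposition~\ref{P2A}, and ultimately into Banach--Alaoglu and the uniform boundedness principle), and what remains is the bookkeeping above. The one point meriting a sentence is the reconciliation of conventions: if one prefers to state the norm with the \emph{compact} set $\overline{U}$ in place of $U$, then one instead applies Proposition~\ref{P2A} with $K_\alpha=t+\overline{U}$ to the set $\{\mu\in\cM(G):|\mu|(t+\overline{U})\leqslant C\ \text{for all }t\}$, which is vaguely closed by outer regularity of the total variation; the two resulting spaces differ only in which of the (equivalent) translation-bounded norms has been fixed, and both are handled identically.
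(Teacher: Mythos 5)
Your main argument is correct and is exactly the paper's intended proof: the corollary is read off from Proposition~\ref{P3A} by taking the covering family $\{t+U:t\in G\}$ with all constants equal to $C$, and observing that $\|\mu\|_U\leqslant C$ is the conjunction of $|\mu|(t+U)\leqslant C$ over all $t$. (The ``$\|\mu\|_K$'' in the displayed definition of $\cM_{C,U}$ is a typo for $\|\mu\|_U$, as you correctly inferred.)

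One caveat on your closing aside: the claim that $\{\mu\in\cM(G):|\mu|(t+\overline{U})\leqslant C\text{ for all }t\}$ is vaguely closed ``by outer regularity'' is false in general. For $G=\R$, $\overline{U}=[0,1]$, $C=1$, the measures $\mu_n=\delta_0+\delta_{1+1/n}$ satisfy $|\mu_n|(t+[0,1])\leqslant 1$ for every $t$ (the two atoms are at distance $1+1/n>1$), yet their vague limit $\delta_0+\delta_1$ gives $|\mu|([0,1])=2$. The map $\mu\mapsto|\mu|(V)$ is vaguely lower semicontinuous only for \emph{open} $V$ (being a supremum of the continuous functionals $\mu\mapsto|\mu(g)|$ over $g$ with $|g|\leqslant f\leqslant 1_V$), which is precisely why Proposition~\ref{P3A} and the corollary are formulated with open precompact sets; the compact-set variant of the sublevel set is only vaguely precompact, not closed. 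This does not affect your proof of the statement as given.
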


As an interesting consequence, we get the following result, which was first observed by Schlottmann \cite[page~145]{Martin2}.

\begin{theorem}\label{T4}
Let $\Omega \subseteq \cM(G)$ be any closed $G$-invariant set. Then, $\Omega$ is vaguely compact if and only if $\Omega$ is equi translation bounded.
\end{theorem}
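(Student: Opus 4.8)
The plan is to prove the two implications separately, leaning entirely on the appendix results already established: Corollary~\ref{cor BL thm2} (vague compactness of $\cM_{C,U}$) for one direction and Proposition~\ref{P2A} (vague precompactness $\Leftrightarrow$ local uniform boundedness of total variations) for the other. Only the converse will actually use that $\Omega$ is $G$-invariant; closedness is what is needed in both directions.

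For the implication ``$\Omega$ equi translation bounded $\Longrightarrow$ $\Omega$ vaguely compact'', I would fix any nonempty open precompact set $U\subseteq G$ and put $C:=\sup_{\mu\in\Omega}\|\mu\|_{\overline{U}}$, which is finite by hypothesis (recall it suffices to control a single compact set). Then $\Omega\subseteq \cM_{C,U}$, and since $\cM_{C,U}$ is vaguely compact by Corollary~\ref{cor BL thm2} while $\Omega$ is vaguely closed by assumption, $\Omega$ is a vaguely closed subset of a vaguely compact space, hence vaguely compact.

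For the converse, assume $\Omega$ is vaguely compact, hence vaguely precompact, so by Proposition~\ref{P2A} the set $\{\,|\mu|(K):\mu\in\Omega\,\}$ is bounded by some $C_K$ for each compact $K\subseteq G$. The point is that $G$-invariance upgrades this to a translation-uniform bound: since $|T_t\mu|=T_t|\mu|$, one has $|\mu|(t+K)=|T_{-t}\mu|(K)$, and $T_{-t}\mu\in\Omega$, whence
\[
\|\mu\|_K=\sup_{t\in G}|\mu|(t+K)=\sup_{t\in G}|T_{-t}\mu|(K)\leqslant \sup_{\nu\in\Omega}|\nu|(K)\leqslant C_K
\]
for every $\mu\in\Omega$. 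As $K$ was arbitrary, $\Omega$ is equi translation bounded.

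I do not expect a genuine obstacle here: the argument is short once the appendix is in place. The only step requiring mild care is the bookkeeping for the action of $G$ on total variations, namely the identity $|T_t\mu|(E)=|\mu|(E-t)$, which is precisely what lets the $G$-invariance of $\Omega$ turn the pointwise-in-$t$ bound of Proposition~\ref{P2A} into the uniform bound defining equi translation boundedness; one must also remember to apply Corollary~\ref{cor BL thm2} with $K=\overline{U}$ for a suitable $U$.
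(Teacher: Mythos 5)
Your argument is correct and is essentially the paper's own proof: both directions rest on Proposition~\ref{P2A}, with $G$-invariance together with $|T_t\mu|(K)=|\mu|(K+t)$ upgrading the pointwise bound $|\mu|(K)\leqslant C_K$ to the uniform bound $\|\mu\|_K\leqslant C_K$, and closedness of $\Omega$ inside a vaguely compact set giving compactness in the other direction. The only cosmetic difference is that you route the latter direction through Corollary~\ref{cor BL thm2} rather than directly through the implication (iii)$\implies$(i) of Proposition~\ref{P2A}, which amounts to the same thing.
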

\begin{proof}
$\Longrightarrow$: Let $K$ be any compact set. By Proposition~\ref{P2A}, the set $\{ \left| \mu \right| (K) : \mu  \in \Omega \}$ is bounded, let $C>0$ be any upper bound for this. Since for all $t \in G$ and $\mu \in \Omega$ we have $T_t \mu \in \Omega$ and hence
\begin{displaymath}
\left| \mu \right| (t+K)=\left| T_t\mu \right| (K) \leqslant C \,.
\end{displaymath}
It follows that, for all $\mu \in \Omega$, we have
\begin{displaymath}
\| \mu \|_K = \sup_{t \in G} \left| \mu \right| (t+K) \leqslant C \,.
\end{displaymath}

\noindent $\Longleftarrow$: Let $K \subseteq G$ be arbitrary. Since $\Omega$ is equi translation bounded, there exists a constant $C>0$ such that $\| \mu \|_K  \leqslant C$ for all $\mu \in \Omega$.
In particular, the set $\{ \left| \mu \right| (K) : \mu  \in \Omega \}$ is bounded by $C$, and hence $\Omega$ is vaguely pre-compact. Since $\Omega$ is closed, it is compact.
\end{proof}

\begin{remark}
In \cite{BL}, the authors define a dynamical system on the translation bounded measures on G (TMDS) to be a pair $(\Omega, G)$ where $\Omega \subseteq \cM(G)$ is any closed $G$-invariant set of measures which are equi translation bounded.
They observe that any such set must be compact, and hence a topological dynamical system.

Theorem~\ref{T4} above says that the converse is true, whenever we have a topological dynamical system $(\Omega, G)$, where $\Omega \subseteq \cM(G)$ is equipped with the vague topology, then it is a TMDS.
\end{remark}

\smallskip

An immediate consequence of Theorem~\ref{T4} is the following.

\begin{coro}
Let $\mu \in \cM(G)$, and let $\XX(\mu) =\overline{ \{ T_t \mu :t \in G \}}$ be the hull of this measure, with the closure being taken in the vague topology. Then, $\XX(\mu)$ is compact if and only if $\mu$ is translation bounded.

Moreover, if $G$ is $\sigma$-compact and $\mu \in \cM^\infty(G)$, then the vague topology is metrisable on $\XX(\mu)$.
\end{coro}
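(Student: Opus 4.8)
The plan is to derive both assertions from Theorem~\ref{T4} and Proposition~\ref{P3A}. First I would check that $\XX(\mu)$ satisfies the hypotheses of Theorem~\ref{T4}: it is vaguely closed by construction, and it is $G$-invariant, since the orbit $O:=\{T_t\mu : t\in G\}$ satisfies $T_sO=O$ for all $s\in G$, while each translation $T_s$ is a homeomorphism of $\cM(G)$ for the vague topology (it is vaguely continuous with vaguely continuous inverse $T_{-s}$), so that $T_s\XX(\mu)=T_s\overline{O}=\overline{T_sO}=\overline{O}=\XX(\mu)$. By Theorem~\ref{T4}, it then suffices to prove that $\XX(\mu)$ is equi translation bounded if and only if $\mu$ is translation bounded.

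One direction is immediate: $\mu=T_0\mu\in O\subseteq\XX(\mu)$, so equi translation boundedness of $\XX(\mu)$ forces $\|\mu\|_K<\infty$ for every compact $K$, i.e.\ $\mu\in\cM^\infty(G)$. For the converse, assume $\mu\in\cM^\infty(G)$, fix a non-empty precompact open set $U\subseteq G$, and put $C:=\|\mu\|_{\overline{U}}<\infty$. Since $\|\cdot\|_{\overline{U}}$ is translation invariant (as in the proof of Theorem~\ref{T4}), we have $\|T_t\mu\|_{\overline{U}}=C$ for all $t\in G$, and hence $|T_t\mu|(s+U)\leqslant\|T_t\mu\|_{\overline{U}}=C$ for all $s,t\in G$. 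As $G=\bigcup_{s\in G}(s+U)$, the orbit $O$ is contained in the set
\[
\MM:=\{\nu\in\cM(G)\ :\ |\nu|(s+U)\leqslant C\ \text{ for all }s\in G\},
\]
which is vaguely compact by Proposition~\ref{P3A}. Being compact, $\MM$ is vaguely closed, so $\XX(\mu)=\overline{O}\subseteq\MM$; thus $\XX(\mu)$ is a closed subset of a vaguely compact set, hence vaguely compact, and in particular equi translation bounded. This gives the desired equivalence.

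For the ``moreover'' statement, I would observe that when $\mu\in\cM^\infty(G)$ the inclusion $\XX(\mu)\subseteq\MM$ established above, together with the metrisability clause of Proposition~\ref{P3A} (equivalently, the fact that $\XX(\mu)$ sits inside some $\cM_{C,U}$ as in Corollary~\ref{cor BL thm2}), yields that the vague topology is metrisable on $\MM$, and hence on $\XX(\mu)$.

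I do not anticipate a real obstacle; the only two points calling for a little care are the $G$-invariance of the vague closure, which reduces to $T_s$ being a homeomorphism, and the preservation of equi translation boundedness when passing from the orbit to its closure --- but the latter is exactly the assertion that the sets $\MM$ in Proposition~\ref{P3A} are vaguely closed, so no additional argument (e.g.\ lower semicontinuity of $|\cdot|$) is required.
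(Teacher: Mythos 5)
Your argument is correct and is exactly the intended one: the paper gives no separate proof, presenting the corollary as an immediate consequence of Theorem~\ref{T4}, and your reduction (closedness and $G$-invariance of the hull, the trivial direction via $\mu=T_0\mu\in\XX(\mu)$, and the converse via translation invariance of $\|\cdot\|_{\overline{U}}$ together with the compact, hence closed, set $\MM$ from Proposition~\ref{P3A}) supplies precisely the missing steps. One caveat on the ``moreover'' clause: the metrisability statements of Proposition~\ref{P3A} and Corollary~\ref{cor BL thm2} require $G$ to be \emph{second countable}, not merely $\sigma$-compact --- and indeed for $G=(\R/\Z)^{\R/\Z}$ (compact, hence $\sigma$-compact, but non-metrisable) the hull $\XX(\delta_0)$ contains a homeomorphic copy of $G$ and is therefore not metrisable --- so the hypothesis ``$\sigma$-compact'' in the corollary should be read as ``second countable'', which is the hypothesis your argument actually uses.
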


\bigskip

We complete the paper by looking at some examples which emphasize the importance of second countability of $G$ for diffraction theory. These show that, outside second countable groups, one should work with van Hove nets and not sequences.

Let us start with an example of a group which is compact (hence $\sigma$-compact) but not metrisable, and an equi-translation bounded sequence of measures which has no convergent subsequence.

\begin{example}\label{ex5} Consider $G:= (\R/\Z)^{\R/\Z}$. Then $G$ is a group, which is compact with respect to the product topology. Similarly to $I^I$ , one can show that $G$ is not sequentially compact (compare \cite{SeeSte}). Therefore, there exists a sequence $(x_n)_{n\in\N}$ with no convergent subsequence.

Define $\mu_n:= \delta_{x_n}$, for all $n\in\N$. Then, $(\mu_n)_{n\in\N}$ is equi-bounded and hence equi-translation bounded. We show that this sequence has no convergent subsequence.

Assume by contradiction that there exists a subsequence $(\mu_{n_k})_{k\in\N}$ convergent to some measure $\mu$.
Since $G$ is compact, there exists a directed set $I$ and a monotone final function $h: I \to \N$, such that the subnet $(x_{n_{h(\beta)}})_{\beta
\in I }$ is convergent to some $a \in G$.
Now, since $a =\lim_\beta x_{k_{n(\beta)}}$, we have 
\begin{displaymath}
f(a)=\lim_\beta f(x_{n_{h(\beta)}})
\end{displaymath}
for all $f \in \Cc(G)$ by the continuity of $f$.

Fix an arbitrary $f \in \Cc(G)$. Now, since $(\mu_{n_k})_{k\in\N}$ converges vaguely to $\mu$, any subnet of $(\mu_{n_k}(f))_{k\in\N}$ converges to $\mu(f)$. In particular,
\begin{displaymath}
\lim_\beta \mu_{n_{h(\beta)}}(f)=\mu(f) \,.
\end{displaymath}
Thus, we have
\begin{displaymath}
\mu(f)=\lim_\beta \mu_{n_{h(\beta)}}(f)=\lim_\beta f(x_{n_{h(\beta)}})=f(a) \,.
\end{displaymath}
Therefore, $\mu=\delta_a$ and hence
\begin{displaymath}
\delta_a= \lim_{k\to\infty} \delta_{x_{n_k}} \,.
\end{displaymath}

Next, since $(x_{n_k})_{k\in\N}$ does not converges to $a$, there exists an open set $U\ni a$ and a subsequence $(x_{n_{k_{\ell}}})_{\ell\in\N}$ of $(x_{n_k})_{k\in\N}$ such that, for all $n\in \N$, we have $x_{n_{k_{\ell}}} \notin U$. By Urysohn's lemma, there exists some $f \in \Cc(G)$ such that $f(a)=1$ and $f(x)=0$ for all $x \notin U$. Since $\delta_a= \lim_{k\to\infty} \delta_{x_{n_k}}$ and $(x_{n_{k_{\ell}}})_{\ell\in\N}$ is a subsequence of $(x_{n_k})_{k\in\N}$, we have $\delta_a= \lim_{\ell\to\infty} \delta_{x_{n_{k_{\ell}}}}$ and hence
\begin{displaymath}
1=f(a)=\lim_{\ell\to\infty} f(x_{n_{k_{\ell}}})=\lim_{\ell\to\infty} 0=0 \,.
\end{displaymath}
Since we obtained a contradiction, our assumption is wrong. Therefore, $(\mu_n)_{n\in\N}$ has no convergent subsequence.
\end{example}

\section{Second countability and the existence of the autocorrelation}

In this section, we discuss the necessity of second countability of $G$ to ensure the existence of the autocorrelation for a translation bounded measure $\mu$.

\smallskip

Let us start with the definition of a F\o lner and van Hove sequence.

\begin{definition} 
A sequence $(A_n)_{n\in\N}$ of precompact subsets of $G$ is called a F\o lner sequence if, for each $x \in G$, we have
\begin{displaymath}
\lim_{n\to\infty} \frac{|F_n \Delta (x+F_n)|}{|F_n|} =0 \,.
\end{displaymath}

A sequence $(A_n)_{n\in\N}$ of precompact subsets of $G$ is called a \textbf{van Hove sequence} if, for each compact set $K \subseteq G$, we have
\[
\lim_{n\to\infty} \frac{|\partial^{K} A_{n}|}{|A_{n}|}  =  0 \, ,
\]
where the \textbf{$K$-boundary $\partial^{K} A$} of an open set $A$ is defined as
\[
\partial^{K} A := \bigl( \overline{A+K}\setminus A\bigr) \cup
\bigl((\left(G \backslash A\right) - K)\cap \overline{A}\, \bigr) \,.
\]
\end{definition}

If $G$ is $\sigma$-compact, then van Hove sequences exists in $G$ \cite{Martin2}. It is well known that each van Hove sequence is a F\o lner sequence.

\smallskip

Now, let us look at the (classical) definition of the autocorrelation. First, we need the following simple result.

\begin{lemma}\label{lemma aut exists} 
Let $G$ be a second countable LCAG, $\omega \in\cM^\infty(G)$ and $( A_n)_{n\in\N}$ a van Hove sequence in $G$. Define
\begin{displaymath}
\gamma_n:= \frac{\omega_n *\widetilde{\omega_n}}{|A_n|}  \,,
\end{displaymath}
where $\omega_n:=\omega|_{A_n}$ is the restriction of $\omega$ to $A_n$.
Then, $\{ \gamma _n\, :\, n\in\N \}$ is vaguely precompact. In particular, there exists a subsequence $(\gamma_{n_k})_{k\in\N}$ which converges to some $\gamma \in \cM^\infty(G)$.
\end{lemma}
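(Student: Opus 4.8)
The plan is to prove that the family $\{\gamma_n \, :\, n\in\N\}$ is equi translation bounded; once this is known, Proposition~\ref{P2A} gives that the set has vague precompact closure on which (since $G$ is second countable) the vague topology is metrisable, so some subsequence $(\gamma_{n_k})_{k\in\N}$ converges vaguely to a measure $\gamma$, and, as all the $\gamma_n$ lie in a common set of the form $\cM_{C,U}$ which is vaguely closed by Corollary~\ref{cor BL thm2}, the limit $\gamma$ is translation bounded. Thus everything reduces to a uniform bound $\|\gamma_n\|_K \leqslant C(K)$ for each fixed compact $K\subseteq G$.

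First I would pass from the convolution to a convolution of total variations. Since $\omega_n = \omega|_{A_n}$ is a finite measure, $\omega_n * \widetilde{\omega_n}$ is a finite measure and $|\omega_n * \widetilde{\omega_n}| \leqslant |\omega_n| * \widetilde{|\omega_n|} = |\omega_n| * |\omega_n|^\dagger$. Evaluating the right-hand side on $1_{t+K}$ and unfolding the convolution gives, for every $t\in G$,
\[
|\gamma_n|(t+K) \leqslant \frac{1}{|A_n|} \int_{A_n} |\omega_n|\bigl(x - t - K\bigr)\ \dd |\omega_n|(x) \,.
\]
Now $|\omega_n| \leqslant |\omega|$ and $\omega$ is translation bounded, so the integrand is at most $\|\omega\|_{-K}$ uniformly in $x$ and $t$, whence
\[
|\gamma_n|(t+K) \leqslant \frac{\|\omega\|_{-K}}{|A_n|}\, |\omega|(A_n) \,.
\]

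The one genuinely technical point is to bound $|\omega|(A_n)/|A_n|$ uniformly in $n$, and this is where the van Hove hypothesis enters. Fixing an open precompact neighbourhood $V$ of $0$, I would write $|V|\, |\omega|(A_n) = \int_G |\omega|\bigl((y - V)\cap A_n\bigr)\ \dd y$ by Fubini and translation invariance of Haar measure, bound $|\omega|\bigl((y-V)\cap A_n\bigr) \leqslant \|\omega\|_{\overline{-V}}$, and observe that this quantity is nonzero only when $y \in A_n + V$; hence $|\omega|(A_n) \leqslant \tfrac{\|\omega\|_{\overline{-V}}}{|V|}\, |A_n + V|$. Since $(A_n + V)\setminus A_n \subseteq \partial^{\overline{V}} A_n$, the van Hove (a fortiori F\o lner) property gives $|A_n + V|/|A_n| \to 1$, so $M := \sup_n |\omega|(A_n)/|A_n| < \infty$, the finitely many initial terms causing no trouble. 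Combining the displays yields $\|\gamma_n\|_K \leqslant M\, \|\omega\|_{-K}$ for all $n$ and all compact $K$, which is the required equi translation boundedness; the conclusion then follows from Proposition~\ref{P2A} and Corollary~\ref{cor BL thm2} as described above. The main obstacle is precisely this uniform averaging bound on $|\omega|(A_n)/|A_n|$, everything else being routine manipulation of total variations together with the already-established compactness results.
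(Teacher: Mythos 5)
Your proof is correct and follows essentially the same route as the paper: bound $|\omega|(A_n)/|A_n|$ uniformly using the van Hove property, dominate $|\gamma_n|(t+K)$ by $\|\omega\|$-type norms times that ratio, and conclude via the vague compactness of the norm balls $\cM_{C,U}$. The only difference is that you prove from scratch the two ingredients the paper imports by citation (the mass bound from Schlottmann's Lemma~1.1(b) and the convolution estimate from \cite[Lem.~6.1]{NS13}), which makes your write-up self-contained but does not change the argument.
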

\begin{proof}
First, by \cite[Lem.~1.1(b)]{Martin2}, the sequence $\frac{1}{|A_n|} \left| \widetilde{\omega_n} \right|(G)$ is bounded. Let $C>0$ be a constant such that
\begin{displaymath}
\frac{1}{|A_n|} \left| \widetilde{\omega_n} \right|(G) \leqslant C \qquad \text{for all } n\in\N \,.
\end{displaymath}
Next, fix any open precompact set $U$ and pick a compact set $W$ such that $\overline{U} \subseteq W^\circ$. By the translation boundedness of $\omega$, we have
\begin{align*}
\| \omega_n \|_W&= \sup_{x \in G} \{ \left| \omega_n \right|(x+W) \} = \sup_{x \in G} \{ \left| \omega \right|\left((x+W)\cap A_n \right) \}\\
&\leqslant \sup_{x \in G} \{ \left| \omega \right|(x+W) \}= \| \omega \|_W < \infty \,.
\end{align*}
Then, by \cite[Lem.~6.1]{NS13}, we have for all $x \in G$
\begin{displaymath}
\left| \frac{1}{|A_n|} \omega_n *\widetilde{\omega_n} \right|(x+U) \leqslant \| \omega_n \|_{x+W} \cdot \left| \frac{1}{|A_n|} \widetilde{\omega_n} \right|(G) \,.
\end{displaymath}
Since $ \| \omega_n \|_{x+W} = \| \omega_n \|_{W}$ we have
\begin{align*}
  \| \gamma_n \|_U &= \sup_{x \in G} \{ \Big| \frac{1}{|A_n|} \omega_n *\widetilde{\omega_n} \Big|(x+K) \}  \\
  &\leqslant \sup_{x \in G} \{ \| \omega_n \|_{W} \cdot \Big| \frac{1}{|A_n|} \widetilde{\omega_n} \Big|(G) \} \\
  &\leqslant C \cdot \| \omega \|_W =: C'\,.
\end{align*}
Therefore, for all $n\in\N$, we have
\begin{displaymath}
\gamma_n \in \cM_{C',U}=\{ \nu \in \cM^\infty(G) : \| \nu \|_U \leqslant C '\} \,.
\end{displaymath}

Since this space is vaguely compact and metrisable by \cite[Thm.~2]{BL} or Corollary~\ref{cor BL thm2}, the claim follows.
\end{proof}

\smallskip

\begin{definition}\label{def autoc} Let $G$ be a second countable LCAG, $(A_n)_{n\in\N}$ be any van Hove sequence in $G$. If $\omega$ is any translation bounded measure, then any limit $\gamma$ of a subsequence of the sequence $(\gamma_n)_{n\in\N}$ defined in Lemma~\ref{lemma aut exists} is called an \emph{autocorrelation of $\omega$}.
\end{definition}

Lemma~\ref{lemma aut exists} says that in second countable LCAG every translation bounded measure has an autocorrelation.

\smallskip
We next show that second countability of $G$ is essential for the existence of the autocorrelation. Recall first that for a LCAG second countability is equivalent to $\sigma$-compactness and metrisability (see for example \cite[Thm.~2.B.2 and Thm.~2.B.4]{CoHa}).

\bigskip

First we show that  $\sigma$-compactness is necessary for the existence of van Hove sequences. In particular, $\sigma$ compactness will be a necessary condition for the existence of the autocorrelation of a single object.

Let us start with the following simple lemma.

\begin{lemma}\label{lem:folner} 
Let $(F_n)_{n\in\N}$ be a F\o lner sequence in $G$. Then,
\begin{displaymath}
G= \bigcup_{n\in\N} (F_n-F_n) \,.
\end{displaymath}
In particular, any group admiting a F\o lner sequence is $\sigma$-compact.
\end{lemma}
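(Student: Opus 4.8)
The plan is to prove the displayed equality by showing that every $x\in G$ belongs to $F_n-F_n$ for all sufficiently large $n$; the reverse inclusion $\bigcup_{n\in\N}(F_n-F_n)\subseteq G$ is automatic. So I would fix $x\in G$. Since $(F_n)_{n\in\N}$ is a F\o lner sequence, each $F_n$ has strictly positive (and, being precompact, finite) Haar measure, and $\lim_{n\to\infty}|F_n\Delta(x+F_n)|/|F_n|=0$. Hence there is some $N$ with $|F_n\Delta(x+F_n)|<|F_n|$ for all $n\ge N$.

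Next I would argue that $F_n\cap(x+F_n)\ne\emptyset$ for every $n\ge N$. Indeed, if these two sets were disjoint, then by translation invariance of Haar measure $|F_n\Delta(x+F_n)|=|F_n|+|x+F_n|=2|F_n|$, which is $\ge|F_n|$ because $|F_n|>0$, contradicting the choice of $N$. Choosing any point in the intersection produces $a,b\in F_n$ with $a=x+b$, i.e. $x=a-b\in F_n-F_n$. This establishes $G=\bigcup_{n\in\N}(F_n-F_n)$.

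For the last assertion, recall that each $F_n$ is precompact, so $\overline{F_n}$ is compact; the subtraction map $G\times G\to G$, $(a,b)\mapsto a-b$, is continuous, so $\overline{F_n}-\overline{F_n}$ is a compact set containing $F_n-F_n$. Therefore $G=\bigcup_{n\in\N}(F_n-F_n)\subseteq\bigcup_{n\in\N}\bigl(\overline{F_n}-\overline{F_n}\bigr)\subseteq G$, exhibiting $G$ as a countable union of compact sets; that is, $G$ is $\sigma$-compact.

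I do not anticipate a genuine obstacle here: the one step that deserves care is the deduction $F_n\cap(x+F_n)\ne\emptyset$, which rests on the translation invariance of Haar measure together with the (standard, and implicit in the definition) fact that a F\o lner sequence consists of sets of strictly positive measure; the remaining steps are purely formal.
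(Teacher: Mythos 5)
Your proof is correct and is essentially the same argument as the paper's: both rest on the observation that $F_n\cap(x+F_n)=\varnothing$ forces $|F_n\Delta(x+F_n)|=2|F_n|$, contradicting the F\o lner condition, and both deduce $\sigma$-compactness from $F_n-F_n\subseteq\overline{F_n}-\overline{F_n}$. The only cosmetic difference is that you argue directly for a fixed $x$ while the paper phrases it as a contradiction for a point outside the union.
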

\begin{proof}
Assume by contradiction that $G \neq \bigcup_{n\in\N} (F_n-F_n)$. Let $y \in G \backslash \bigcup_{n\in\N} (F_n-F_n)$. Then, for each $n$, we have $y \notin F_n-F_n$ and hence $F_n \cap (y+F_n) =\varnothing$. This gives 
\[
|F_n \Delta (y+F_n)|= |F_n|+ |y+F_n| =2 |F_n|\,. 
\]
Therefore,
\begin{displaymath}
2=\frac{|F_n \Delta (x+F_n)|}{|F_n|} \qquad \text{for all } n\in\N \,,
\end{displaymath}
which contradicts the definition of F\o lner sequence.

The last claim follows from
\begin{displaymath}
G= \bigcup_{n\in\N} (F_n-F_n) \subseteq \bigcup_{n\in\N} (\overline{F_n}-\overline{F_n}) \,.
\end{displaymath}
\end{proof}

\begin{remark} 
The commutativity of $G$ is not important in Lemma~\ref{lem:folner}. Indeed, if $(F_n)_{n\in\N}$ is a F\o lner sequence in an arbitrary LCG, then exactly as in Lemma~\ref{lem:folner} one can show that
\begin{displaymath}
G= \bigcup_{n\in\N} F_n^{-1}F_n= \bigcup_{n\in\N} F_nF_n^{-1} \,.
\end{displaymath}
\end{remark}

As an immediate consequence we get the following result.

\begin{proposition} 
Let $G$ be a LCAG. Then, the following statements are equivalent.
\begin{itemize}
  \item [(i)] $G$ is $\sigma$-compact.
  \item [(ii)] There exists a van Hove sequence in $G$.
  \item [(iii)] There exists a F\o lner sequence in $G$.
\end{itemize}
\end{proposition}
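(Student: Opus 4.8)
The plan is to close the cycle (i)$\implies$(ii)$\implies$(iii)$\implies$(i), using material already assembled in this section. The only genuinely new work here is the last implication, which is exactly the content of Lemma~\ref{lem:folner}; the first two implications are quotable facts recorded just before the statement.

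First I would prove (i)$\implies$(ii): this is Schlottmann's construction of van Hove sequences in $\sigma$-compact LCAG, which is stated in the text as ``If $G$ is $\sigma$-compact, then van Hove sequences exist in $G$ \cite{Martin2}''. Next, (ii)$\implies$(iii) is the remark immediately following the definitions, namely that every van Hove sequence is a F\o lner sequence; one unwinds the definition of the $K$-boundary and notes that $|F_n \Delta (x+F_n)| \leqslant 2\,|\partial^{\{x,-x\}} F_n|$ (or any convenient compact neighbourhood of $\{x,-x\}$), so the van Hove condition forces the F\o lner ratio to vanish. Finally, (iii)$\implies$(i) is precisely Lemma~\ref{lem:folner}: a F\o lner sequence $(F_n)_{n\in\N}$ satisfies $G = \bigcup_n (F_n - F_n) \subseteq \bigcup_n(\overline{F_n}-\overline{F_n})$, a countable union of compact sets, so $G$ is $\sigma$-compact.

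Since all three implications are either cited or already established, there is essentially no obstacle; the statement is a bookkeeping corollary. If anything, the only point needing a word of care is the estimate $|F_n \Delta (x+F_n)| \leqslant 2\,|\partial^{K} F_n|$ for a suitable compact $K\ni x$ in the step (ii)$\implies$(iii), which one should either spell out or replace by a direct citation of the standard fact that van Hove $\Rightarrow$ F\o lner. Accordingly the written proof would read, in full: ``(i)$\implies$(ii) is the existence of van Hove sequences in $\sigma$-compact groups \cite{Martin2}. (ii)$\implies$(iii) holds because every van Hove sequence is a F\o lner sequence. (iii)$\implies$(i) is Lemma~\ref{lem:folner}.''
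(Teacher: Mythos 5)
Your proposal is correct and follows the paper's proof exactly: the same cycle (i)$\implies$(ii) by citing \cite{Martin2}, (ii)$\implies$(iii) from the standard fact that van Hove sequences are F\o lner sequences, and (iii)$\implies$(i) from Lemma~\ref{lem:folner}. The paper simply records (ii)$\implies$(iii) as obvious, whereas you sketch the boundary estimate; either is fine.
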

\begin{proof}
(i) $\Longrightarrow$ (ii) follows from \cite{Martin2}.

(ii) $\Longrightarrow$ (iii) is obvious.

(iii) $\Longrightarrow$ (i) follows from Lemma~\ref{lem:folner}.
\end{proof}

Since the (usual) definition of the autocorrelation measure requires the existence of a van Hove sequence, the autocorrelation can only exist in $\sigma$-compact groups.

\bigskip

Next, we discuss the issue of metrisability of $G$. We show here that there exist translation bounded measures on $\sigma$-compact LCAG $G$ which do not have an autocorrelation as defined in Definition~\ref{def autoc}.

\begin{example} 
Let $\KK:= (\R/\Z)^{\R/\Z}$ and let $G:= \KK \times \Z$, with the Haar measure being the product between the probability Haar measure on $\KK$ and the counting measure on $\Z$. Then, $G$ is a $\sigma$-compact group.

Let $(x_n)_{n\in\N}$ be a sequence in $\KK$ as in Example~\ref{ex5}. Define
\begin{displaymath}
\omega:= \sum_{m=1}^\infty \sum_{k=(m-1)!+1}^{m!} \left(\delta_{(0,k)}+ \delta_{(x_m,k)} \right) \,.
\end{displaymath}
This is a translation bounded measure. Let $A_n:= \KK \times \bigl( (-n!,n!) \cap \Z \bigr)$, for all $n\in\N$. Then, $(A_n)_{n\in\N}$ is a van Hove sequence in $G$.

We claim that the sequence $(\gamma_n)_{n\in\N}$ has no convergent subsequence. Note that by the translation boundedness of $\omega$, the set $\{ \gamma_n\, :\, n\in\N \}$ has compact closure (compare Lemma~\ref{lemma aut exists}).
\end{example}
\begin{proof}
A simple computation yields
\begin{align*}
\gamma_n & = \frac{1}{|A_n|} \omega_n *\widetilde{\omega_n}\\
  &= \frac{1}{2n!+1} \left( \sum_{m=1}^{n} \sum_{k=(m-1)!+1}^{m!}
     \left(\delta_{(0,k)}+ \delta_{(x_m,k)} \right) \right)*\left( 
     \sum_{j=1}^{n} \sum_{l=(j-1)!+1}^{m!} \left(\delta_{(0,-l)}+ 
     \delta_{(-x_j,-l)} \right) \right) \\
  &= \frac{1}{2n!+1} \left( \sum_{m,j=1}^{n} \sum_{k=(m-1)!+1}^{m!} 
      \sum_{l=(j-1)!+1}^{j!}
     \left(\delta_{(0,k)}+ \delta_{(x_m,k)} \right)* \left(\delta_{(0,-l)}
     + \delta_{(-x_j,-l)} \right) \right) \\
 & =\frac{1}{2n!+1} \left( \sum_{m,j=1}^{n} \sum_{k=(m-1)!+1}^{m!} 
     \sum_{l=(j-1)!+1}^{j!} \left(\delta_{(0,k-l)}+ \delta_{(x_m,k-l)}
     + \delta_{(-x_j,k-l)}+ \delta_{(x_m-x_j,k-l)} \right) \right) \,.
\end{align*}
Define
\begin{displaymath}
\nu_n:=   \gamma_n|_{\KK \times \{ 0 \}} \,.
\end{displaymath}
In this situation, we have the conditions
\[
k=l\,, \qquad (m-1)!+1\leqslant k \leqslant m! \qquad \text{ and } \qquad (j-1)!+1 \leqslant l \leqslant j! \,,
\]
which give $m!\leqslant j!$ and $j!\leqslant m!$. Hence, we have $j=m$.
By the above, we have
\begin{align*}
  \nu_n &=\frac{1}{2n!+1} \left( \sum_{m=1}^{n} \sum_{k=(m-1)!+1}^{m!} \left(\delta_{(0,0)}+ \delta_{(x_m,0)} + \delta_{(-x_m,0)}+ \delta_{(0,0)} \right) \right) \\
  &=\frac{1}{2n!+1} \left( \sum_{m=1}^{n} \left( m!-(m-1)!\right) \left( 2\delta_{(0,0)}+ \delta_{(x_m,0)} + \delta_{(-x_m,0)} \right) \right) \\
  &=\left( \frac{2(n!-1)}{2n!+1} \delta_{(0,0)} \right)+  \left( \sum_{m=1}^{n} \frac{m!-(m-1)!}{2n!+1} \left(  \delta_{(x_m,0)} + \delta_{(-x_m,0)} \right) \right) \,.
\end{align*}

Assume by contradiction that $(\gamma_n)_{n\in\N}$ has a subsequence $(\gamma_{n_k})_{k\in\N}$ which converges vaguely. Since $\KK \times \{ 0 \}$ is closed and open in $G$, the restriction $(\nu_{n_k})_{k\in\N}$ with $\nu_{n_k}=\gamma_{n_k}|_{\KK \times \{ 0 \}}$ is vaguely convergent. Moreover, since $(\frac{2((n_k)!-1)}{2(n_k)!+1} \delta_{(0,0)})_{k\in\N}$ converges vaguely to $\delta_{(0,0)}$, the sequence $(\nu_{n_k}-\frac{2((n_k)!-1)}{2(n_k)!+1} \delta_{(0,0)})_{k\in\N}$ is vaguely convergent. Let
\begin{displaymath}
\mu_k:= \nu_{n_k}-\frac{2((n_k)!-1)}{2(n_k)!+1} \delta_{(0,0)}=  \sum_{m=1}^{n_k} \frac{m!-(m-1)!}{2k!+1} \left(  \delta_{(x_m,0)} + \delta_{(-x_m,0)} \right)  \,.
\end{displaymath}
Then, by the above, $(\mu_k)_{k\in\N}$ is vaguely convergent.

Note next that
\begin{align*}
\Big\|  \sum_{m=1}^{n_k-1} \frac{m!-(m-1)!}{2(n_k)!+1} \left(  \delta_{(x_m,0)} 
    + \delta_{(-x_m,0)} \right) \Big\|  
    &= \left| \sum_{m=1}^{n_k-1} \frac{m!-(m-1)!}{2(n_k)!+1} \left(
      \delta_{(x_m,0)} + \delta_{(-x_m,0)} \right) \right|(G) \\
    &\leqslant \sum_{m=1}^{n_k-1}  \left| \frac{m!-(m-1)!}{2(n_k)!+1} \left( 
      \delta_{(x_m,0)} + \delta_{(-x_m,0)} \right) \right|(G)\\
    &=\sum_{m=1}^{n_k-1} 2 \frac{m!-(m-1)!}{2(k_n)!+1}\\
    &= \frac{2 (n_k-1)!-2}{2 (n_k)!+1}  \,.
\end{align*}
As $\lim_{n\to\infty} \frac{2 (n-1)!-2}{2 n!+1}=0$, it follows that $\lim_{k\to\infty} \frac{2 (n_k-1)!-2}{2 (n_k)!+1}=0$ and hence 
\[
\sum_{m=1}^{n_k-1} \frac{m!-(m-1)!}{2(n_k)!+1} \left(  \delta_{(x_m,0)} + \delta_{(-x_m,0)} \right)
\] 
converges vaguely to zero.
Therefore,
\begin{displaymath}
\mu_k-\sum_{m=1}^{n_k-1} \frac{m!-(m-1)!}{2(n_k)!+1} \left(  \delta_{(x_m,0)} + \delta_{(-x_m,0)} \right)=\frac{(n_k)!-(n_k-1)!}{2(n_k)!+1} \left(  \delta_{(x_{n_k},0)} + \delta_{(-x_{n_k},0)} \right)
\end{displaymath}
is vaguely convergent. Similarly to the above, $\lim_{k\to\infty}\frac{(n_k-1)!}{2(n_k)!+1} \left(  \delta_{(x_{n_k},0)} + \delta_{(-x_{n_k},0)} \right)=0$ in the vague topology, and hence $\frac{(n_k)!}{2(n_k)!+1} \left(  \delta_{(x_{n_k},0)} + \delta_{(-x_{n_k},0)} \right)$ is vaguely convergent.

Finally, since $\frac{2(n_k)!+1}{(n_k)!}$ converges to $2$, we get that $(\varphi_{k})_{k\in\N}$ with
\begin{displaymath}
\varphi_k:= \delta_{(x_{n_k},0)} + \delta_{(-x_{n_k},0)}= \frac{2(n_k)!+1}{(n_k)!}\left(\frac{(n_k)!}{2(n_k)!+1} \left(  \delta_{(x_{n_k},0)} + \delta_{(-x_{n_k},0)} \right) \right)
\end{displaymath}
is vaguely convergent to some measure $\varphi$.

Define $\psi_k:=\delta_{x_{n_k}}-\delta_{-x_{n_k}}$, which is a measure on $G$. Then, $(\phi_k)_{k\in\N}$ converges vaguely on $G\times \{0\}$ if and only if $(\psi_k)_{k\in\N}$ converges vaguely on $G$, since $G\times\{0\}$ and $G$ are homeomorphic as topological groups. Let $\psi$ denote the vague limit of $(\psi_k)_{k\in\N}$. Now, we repeat the argument of Example~\ref{ex5}, slightly modified.
Since $G$ is compact, there exists a directed set $I$ and a monotone final function $h: I \to \N$, such that the subnet $(x_{n_{h(\beta)}})_{\beta
\in I }$ is convergent to some $a \in G$. It follows immediately that $-a =\lim_\beta -x_{n_{h(\beta)}}$.
Next, since $a =\lim_\beta x_{n_{h(\beta)}}$ and $-a =\lim_\beta -x_{n_{h(\beta)}}$, for all $f \in \Cc(G)$, we have by the continuity of $f$
\[
f(a)=\lim_\beta f(x_{n_{h(\beta)}}) \qquad \text{ and } \qquad 
f(-a)=\lim_\beta f(-x_{n_{h(\beta)}}) \,.
\]

Fix an arbitrary $f \in \Cc(G)$. Now, since $(\psi_n)_{n\in\N}$ converges vaguely to $\psi$, we have $\psi_n(f) \to \psi(f)$, and hence, any subnet of $(\psi_n(f))_{n\in\N}$ converges to $\psi(f)$. In particular,
\begin{displaymath}
\lim_\beta \psi_{h(\beta)}(f)=\psi(f) \,.
\end{displaymath}
Thus, we have
\begin{displaymath}
\psi(f)=\lim_\beta \psi_{h(\beta)}(f)=\lim_\beta f(x_{n_{h(\beta)}})+ f(-x_{n_{h(\beta)}})=f(a)+f(-a) \,.
\end{displaymath}
Therefore, $\psi=\delta_a+\delta_{-a}$ and hence
\begin{displaymath}
\delta_a+\delta_{-a}= \lim_{k\to\infty} \delta_{x_{n_k}}+\delta_{-x_{n_k}} \,.
\end{displaymath}

Next, since $(x_{n_k})_{k\in\N}$ does not converges to $a$, there exists an open set $U\ni a$ and a subsequence $(x_{n_{k_{\ell}}})_{\ell\in\N}$ of $(x_{n_k})_{k\in\N}$ such that, for all $n\in \N$, we have $x_{n_{k_{\ell}}} \notin U$. Since $(x_{n_{k_{\ell}}})_{\ell\in\N}$ does not converges to $-a$, there exists an open set $ V\ni -a$ and a subsequence $(y_{n_m})_{m\in\N}$ of $(x_{n_{k_{\ell}}})_{\ell\in\N}$ (with $y_{n_m}:=x_{n_{k_{\ell_m}}}$) such that, for all $n\in \N$, we have $y_{n_m} \notin V$.

As $a \in U \cap (-V)$, by Urysohn's lemma, there exists some non-negative $f \in \Cc(G)$ such that $f(a)=1$ and $f(x)=0$ for all $x \notin U \cap (-V)$. Since $\delta_a+\delta_{-a}= \lim_{k\to\infty} \delta_{x_{n_k}}+\delta_{-x_{n_k}}$ and $(y_{n_m})_{m\in\N}$ is a subsequence of $(x_{n_k})_{k\in\N}$, we have $\delta_a+\delta_{-a}= \lim_{m\to\infty} \delta_{y_{n_m}}+\delta_{-y_{n_m}}$ and hence
\begin{displaymath}
f(a)+f(-a)=\lim_{m\to\infty} f(y_{n_m})+f(-y_{n_m}) \,.
\end{displaymath}
Note that since $y_{n_m} \notin U$ and hence $y_{n_m} \notin U \cap (-V)$, we have $f(y_{n_m})=0$. Similarly, since $y_{n_m} \notin V$ we have $-y_{n_m} \notin -V$ and hence $-y_{n_m} \notin U \cap (-V)$, which yields $f(-y_{n_m})=0$. Therefore,
\begin{displaymath}
f(a)+f(-a)=\lim_{m\to\infty} 0+0 =0  \,.
\end{displaymath}
But this is not possible as, by construction, $f(a)=1$ and $f(-a) \geqslant 0$.
Therefore, we get a contradiction.

Since we obtained a contradiction, our assumption that $(\gamma_n)_{n\in\N}$ has a convergent subsequence is wrong.
\end{proof}

\subsection*{Acknowledgments}
The work was supported by NSERC with grant 03762-2014 (NS) and by the German Research Foundation (DFG) via research grant 415818660 (TS). The authors are grateful for the support.

\end{document}